\theoremstyle{definition}
\newtheorem{theo}{Theorem}[section]
\newtheorem*{theo*}{Theorem}
\newtheorem{prop}[theo]{Proposition}
\newtheorem{lem}[theo]{Lemma}
\newtheorem{rem}[theo]{Remark}
\newtheorem*{claim}{Claim}
\newtheorem{defi}[theo]{Definition}
\newtheoremstyle{custom}{}{}{\itshape}{}{\bfseries}{.}{.5em}{#1 \thmnote{#3}}
\theoremstyle{custom}
\newtheorem*{ctheo}{Theorem}
\newtheorem*{cprop}{Proposition}
\def\Z{{\mathbb Z}}
\def\N{{\mathbb N}}
\def\cD{{\mathcal D}}
\def\cB{{\mathcal B}}
\def\cA{{\mathcal A}}
\def\cS{{\mathcal S}}
\def\cW{{\mathcal W}}
\DeclareRobustCommand{\cev}[1]{%
{\mathpalette\do@cev{#1}}%
}
\newcommand{\do@cev}[2]{%
\vbox{\offinterlineskip
	\sbox\z@{$\m@th#1 x$}%
	\ialign{##\cr
		\hidewidth\reflectbox{$\m@th#1\vec{}\mkern4mu$}\hidewidth\cr
		\noalign{\kern-\ht\z@}
		$\m@th#1#2$\cr
	}%
}%
}
\author{Basti\'an Espinoza}
\address{Departamento de Ingenier\'{\i}a Matem\'atica and Centro de Modelamiento Matem\'atico, Universidad de Chile \& IRL-CNRS 2807, Beauchef 851, Santiago, Chile.}
\address{Laboratoire Amiénois de Mathématique Fondamentale et Appliquée, CNRS UMR 7352, Université de Picardie Jules Verne, 33 rue Saint Leu, 80039 Amiens cedex 1, France.} 
\email{bespinoza@dim.uchile.cl}
\author[add1]{Alejandro Maass}
\address{Departamento de Ingenier\'{\i}a Matem\'atica and Centro de Modelamiento Matem\'atico, Universidad de Chile \& IRL-CNRS 2807, Beauchef 851, Santiago, Chile.}
\email{amaass@dim.uchile.cl}
\subjclass[2020]{Primary: 37B10; Secondary: 37B10}
\keywords{S-adic subshifts, automorphism group, minimal Cantor systems, finite topological rank}
\thanks{This research was partially supported by grant ANID-AFB 170001. The first author thanks Doctoral Felowship CONICYT-PFCHA/Doctorado Nacional/2020-21202229}
\begin{document}
\title{On the automorphism group of minimal $\mathcal{S}$-adic subshifts of finite alphabet rank}
\date{\today}
\maketitle

\begin{abstract}
It has been recently proved that the automorphism group of a minimal subshift with non-superlinear word complexity is virtually $\mathbb{Z}$ \cite{DDPM15,CK15}. In this article we extend this result to a broader class proving that the automorphism group of a minimal $\mathcal{S}$-adic subshift of finite alphabet rank is virtually $\mathbb{Z}$. The proof is based on a fine combinatorial analysis of the asymptotic classes in this type of subshifts, which we prove are a finite number.
\end{abstract}

\markboth{Basti\'an Espinoza, Alejandro Maass}{On the automorphism group of minimal $\mathcal{S}$-adic subshifts of finite alphabet rank}

\section{Introduction} \label{sec:Introduction}

Let ${\mathcal A}$ be a finite alphabet and let $X\subseteq {\mathcal A}^\Z$ be a \emph{subshift}, \emph{i.e.}, a closed set that is invariant under the left shift $T:{\mathcal A}^\Z \to {\mathcal A}^\Z$. The automorphism group of $(X,T)$, Aut$(X,T)$, is the set of homeomorphisms from $X$ to itself which commute with $T$. The study of the automorphism group of low word complexity subshifts $(X,T)$ has attracted a lot of attention in recent years. By word complexity we mean the increasing function $p_X\colon \N\to \N$ which counts the number of  words of length $n\in \N$  appearing in points of the subshift $(X,T)$. In contrast to the case of non trivial mixing shifts of finite type or synchronized systems, where the algebraic structure of this group can be very rich 
\cite{BLR,KR90,FF}, the automorphism group of low word complexity subshifts is expected to present high degrees of rigidity. The most relevant example illustrating this fact are minimal subshifts of non-superlinear word complexity, where the  automorphism group is virtually $\Z$ \cite{CK15,DDPM15}. Interestingly, in \cite{Salo} (and then in \cite{DDPM15} in a more general class) the author provides a Toeplitz subshift with complexity $p_X(n)\leq Cn^{1.757}$, whose automorphism group is not finitely generated. So some richness in the algebraic structure of the automorphism groups of  low word complexity subshifts can arise. Other low word complexity subshifts have been considered by Cyr and Kra in a series of works. In \cite{CK16-1} they proved that for transitive subshifts, if 
$\displaystyle\liminf_{n \to +\infty} p_X(n)/n^2=0$, then the quotient ${\rm Aut}(X,T)/\langle T \rangle$ is a periodic group, where $\langle T \rangle$ is the group spanned by the shift map; and in \cite{CK16-2} for a large class of minimal subshifts of subexponential complexity they also proved that the automorphism group is amenable.  All these classes and examples show that there is still a lot to be understood on the automorphism groups of low word complexity subshifts.

In this article we study the automorphism group of \emph{minimal $\cS$-adic subshifts of finite or bounded alphabet rank}. This class of minimal subshifts is somehow the most natural class containing minimal subshifts of non-superlinear complexity, but it is much broader as was shown in \cite{DDPM15,DDPM20}.
Moreover, this class contains several well studied minimal symbolic systems. Among them, substitution subshifts, linearly recurrent subshifts, symbolic codings of interval exchange transformations, dendric subshifts and some Toeplitz sequences. Thus, this class represents a useful framework for both, proving general theorems in the low word complexity world and building subshifts with interesting dynamical behavior. The descriptions made in \cite{BKMS10} of its invariant measures and in \cite{DFM15} of its eigenvalues are examples of the former, and the well behaved $\cS$-adic codings of high dimensional torus translations from \cite{BST20} is an example of the later.
\smallskip

The main result of this article is the following rigidity theorem:
\begin{theo}\label{theo:main}
Let $(X,T)$ be a minimal $\cS$-adic subshift given by an everywhere growing directive sequence $\boldsymbol{\tau} = (\tau_n\colon \cA_{n+1}^+\to \cA_n^+)_{n\geq0}$. Suppose that $\boldsymbol{\tau}$ is of finite alphabet rank, \textit{i.e.}, $\displaystyle \liminf_{n\to +\infty} \# \cA_n < +\infty$. Then, Aut$(X,T)$ is virtually $\Z$.
\end{theo}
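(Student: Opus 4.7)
My plan follows the template from the non-superlinear complexity results \cite{DDPM15,CK15}, which reduce virtual cyclicity of $\mathrm{Aut}(X,T)$ to the finiteness of the set of asymptotic pairs modulo the shift, and then concentrate the new combinatorial work on establishing this finiteness in the $\cS$-adic setting. Recall that $(x,y) \in X \times X$ with $x\neq y$ is an \emph{asymptotic pair} if $d(T^n x, T^n y) \to 0$ as $n \to +\infty$, and that $T$ acts diagonally on the set of such pairs.

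\medskip

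\textbf{Step 1 (the main combinatorial step).} I would first prove that the set of $T$-orbits of asymptotic pairs in $(X,T)$ is finite. The key tool is the $\cS$-adic hierarchy: every $x \in X$ admits, along the compositions $\tau_0 \circ \cdots \circ \tau_{n-1}$, a factorisation into blocks indexed by letters of $\cA_n$. For an asymptotic pair $(x,y)$, the disagreement window is bounded, and at each sufficiently high level $n$ it is contained in finitely many adjacent level-$n$ blocks of both $x$ and $y$. Restricting to a subsequence of levels along which $\#\cA_n \leq K := \liminf_n \#\cA_n$, a pigeonhole argument on the finitely many possible letter-neighbourhoods, together with the bounded ``recoding'' data needed to make the two level-$n$ decompositions eventually agree, lets us show that, up to a power of $T$, the pair $(x,y)$ is determined by data in a finite set. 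This bounds the number of $T$-orbits of asymptotic pairs.

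\medskip

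\textbf{Step 2 (reduction to a finite permutation action and a cyclic kernel).} Any $\phi \in \mathrm{Aut}(X,T)$ commutes with $T$ and is a homeomorphism, so it sends asymptotic pairs to asymptotic pairs; hence it permutes the finite set $\mathcal{O}$ of $T$-orbits of asymptotic pairs produced in Step 1, yielding a homomorphism $\mathrm{Aut}(X,T) \to \mathrm{Sym}(\mathcal{O})$ with finite image. Let $K$ be its kernel, a finite-index subgroup. Fix a representative pair $(x_0,y_0)$ of one orbit in $\mathcal{O}$. For $\phi \in K$, the pair $(\phi x_0, \phi y_0)$ lies in the $T$-orbit of $(x_0,y_0)$, so there is a unique $n(\phi) \in \Z$ with $(\phi x_0, \phi y_0) = (T^{n(\phi)} x_0, T^{n(\phi)} y_0)$. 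The assignment $\phi \mapsto n(\phi)$ is a group homomorphism $K \to \Z$, and its kernel consists of $\phi$ fixing $x_0$; by minimality the $T$-orbit of $x_0$ is dense, and $\phi$ commuting with $T$ then forces $\phi = \id$. Hence $K$ embeds in $\Z$ and $\mathrm{Aut}(X,T)$ is virtually $\Z$.

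\medskip

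\textbf{Main obstacle.} Step 1 is the real novelty; Step 2 is a standard adaptation of the argument already used in the non-superlinear complexity case. In that setting, the complexity itself bounds the number of asymptotic pairs directly; here, finite alphabet rank permits polynomial complexity of arbitrary degree, so a different mechanism is required. The core difficulty is to control how the position of the disagreement in an asymptotic pair propagates through the tower of substitutions, and I expect the crucial ingredient to be a recognisability-type desubstitution lemma for finite alphabet rank $\cS$-adic sequences, robust enough to survive the fact that the individual morphisms $\tau_n$ may have arbitrarily large image length and no self-similarity.
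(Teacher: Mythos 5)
Your overall architecture coincides with the paper's: bound the asymptotic structure of $(X,T)$, then use the standard action of $\mathrm{Aut}(X,T)$ on that finite structure (the paper invokes Corollary~3.3 of \cite{DDPM15}, which is exactly your Step~2, run on asymptotic \emph{classes} rather than orbits of pairs). Step~2 is therefore fine. The problem is that Step~1 --- which is the entire mathematical content of the paper --- is not proved, and the sketch you give does not confront the actual difficulty. Two points. First, recognizability is not a hypothesis of the theorem: the directive sequence is only assumed everywhere growing and of finite alphabet rank. The paper deliberately avoids recognizability altogether and instead proves a purely combinatorial statement (its Theorem~\ref{prop:main_technical}): for \emph{any} finite set of words $\cW$, if two points factorizable over $\cW$ agree on $(-\infty,0)$ and differ at $0$, then $x_{[-\langle\cW\rangle,0)}$ lies in a set of size at most $122(\#\cW)^7$, with no uniqueness of factorization assumed. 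This is what makes the count depend only on the alphabet rank $K$ and not on the level $n$, and it is established through the machinery of interpretations, simple double interpretations, irreducible families, and Fine--Wilf periodicity arguments (Sections~\ref{sec:interpretations} and~\ref{sec:main_proof}). Your ``pigeonhole on letter-neighbourhoods together with bounded recoding data'' is precisely the step whose bound must be shown to be independent of $n$, and nothing in your sketch does this: the level-$n$ blocks have length $\geq\langle\tau_{[0,N)}\rangle\to\infty$, so the offset of the disagreement inside a block, and the relative alignment of the two factorizations of the common left tail, are a priori unbounded quantities that must be controlled.

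Second, even the recognizability route you gesture at is left as an unproven expectation. If you want to pursue it, you would need to (i) invoke eventual recognizability for everywhere growing, finite alphabet rank directive sequences with aperiodic levels (a nontrivial theorem of \cite{BSTY17}), (ii) check that minimality and infiniteness of $X$ force all levels $X^{(n)}_{\boldsymbol\tau}$ to be aperiodic, and (iii) still carry out the alignment argument: with unique desubstitution one can show the disagreement position is the first difference of $\tau_{[0,n)}(a)$ and $\tau_{[0,n)}(b)$ for a pair of level-$n$ letters, which does yield a bound of order $K^3$ --- but none of this is in your proposal. Also note a smaller imprecision: for an asymptotic pair the ``disagreement window'' is not bounded; the two points merely agree on a half-line, and it is the boundary of agreement, not a bounded window of disagreement, that one localizes. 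As written, the proposal restates the target of Step~1 and defers the key lemma, so it does not constitute a proof.
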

\smallskip

A minimal $\cS$-adic subshift of \emph{finite topological rank}, as stated in \cite{DDPM20}, is defined as an $\cS$-adic subshift in which the defining directive sequence $\boldsymbol{\tau}$ is proper, primitive, recognizable and with finite alphabet rank. In particular, $\boldsymbol{\tau}$ is everywhere growing. Therefore, Theorem \ref{theo:main} includes all minimal $\cS$-adic subshifts of finite topological rank. Also, in the same paper, the authors prove that minimal subshifts of non-superlinear word complexity are $\cS$-adic of finite topological rank.  Thus, Theorem \ref{theo:main} can be seen as a generalization to a much broader class of the already mentioned results from \cite{CK15} and \cite{DDPM15}. Finally, by results stated in \cite{DDPM20}, Theorem \ref{theo:main} also applies to all level subshifts of minimal Bratteli-Vershik systems of finite topological rank and its symbolic factors.

The proof of Theorem \ref{theo:main} follows from a fine combinatorial analysis of asymptotic classes of $\cS$-adic subshifts of finite alphabet rank. This idea already appeared in \cite{DDPM15}, where the authors prove that the automorphism group of a minimal system is virtually $\Z$ whenever it has finitely many asymptotic classes. The following theorem summarizes this combinatorial analysis. 

\begin{theo}\label{prop:main_technical}
Let $\cW \subseteq \cA^+$ be a set of nonempty words and define $\displaystyle \langle\cW\rangle \coloneqq \min_{w\in\cW}\mathrm{length}(w)$. Then, there exists $\cB \subseteq \cA^{\langle\cW\rangle}$ with
$\#\cB \leq 122(\# \cW)^7$ such that:
if $x,x' \in \cA^\mathbb{Z}$ are factorizable over $\cW$, $x_{(-\infty,0)} = x'_{(-\infty,0)}$ and $x_0\not= x'_0$, then $x_{[-\langle \cW\rangle,0)} \in \cB$.
\end{theo}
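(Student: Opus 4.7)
The plan is to classify asymptotic pairs $(x,x')$ according to where the cuts of their $\cW$-factorizations sit relative to $0$, and in each resulting configuration to exhibit $x_{[-\langle\cW\rangle,0)}$ as a concatenation of pieces of a bounded number of words of $\cW$.

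Let $c_x \leq 0$ be the largest cut of $x$'s factorization, and let $w_x \in \cW$ be the word starting at $c_x$ (so $w_x$ contains position $0$), with $w_x^-$ the preceding word of the factorization; define $c_{x'}, w_{x'}, w_{x'}^-$ analogously for $x'$. Since every word of $\cW$ has length at least $\langle\cW\rangle$, the cut just before $c_x$ lies at or below $-\langle\cW\rangle$, and similarly for $x'$; consequently the interval $(-\langle\cW\rangle, 0]$ contains at most one cut of each factorization, namely $c_x$ or $c_{x'}$ respectively.

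I would first dispose of the easy configurations, namely when at least one of the cuts $c_x, c_{x'}$ equals $0$ or $-\langle\cW\rangle$: in these cases $x_{[-\langle\cW\rangle,0)}$ is a suffix or a prefix of one of $w_x, w_x^-, w_{x'}, w_{x'}^-$, contributing at most $O(\#\cW)$ elements to $\cB$. In the remaining configurations, each of $c_x, c_{x'}$ lies in $(-\langle\cW\rangle, 0) \cup (-\infty, -\langle\cW\rangle)$. Assuming by symmetry that $c_{x'} \leq c_x$, the hypotheses $x_{(-\infty,0)} = x'_{(-\infty,0)}$ and $x_0 \neq x'_0$ force the overlap-with-divergence identity
\[
w_x[i] = w_{x'}[i + (c_x - c_{x'})] \text{ for } 0 \leq i < -c_x, \qquad w_x[-c_x] \neq w_{x'}[-c_{x'}],
\]
together with analogous identities involving $w_x^-, w_{x'}^-$ and, when necessary, one further level back $w_x^{--}, w_{x'}^{--}$. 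Using these identities one reconstructs $x_{[-\langle\cW\rangle,0)}$ as a concatenation of boundedly many pieces (a suffix, possibly some middles, and a prefix) of words drawn from a tuple of at most $7$ elements of $\cW$.

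The main obstacle I foresee is controlling the integer parameters $c_x, c_{x'}$ and the various overlap lengths, which a priori range over $\{-\langle\cW\rangle+1,\ldots,0\}$ and are thus unbounded in terms of $\#\cW$. The key technical point is therefore a finite-rigidity statement: once the tuple of at most $7$ words from $\cW$ is fixed, only boundedly many values of these integer parameters yield distinct windows, because the first point of divergence in the alignment of $w_x$ against $w_{x'}$ (and likewise at the deeper levels) is uniquely determined by the chosen words. Summing the contributions of all configuration types then yields the announced bound $\#\cB \leq 122(\#\cW)^7$.
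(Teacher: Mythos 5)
Your reduction of the problem is sound as far as it goes: the window $x_{[-\langle\cW\rangle,0)}$ is indeed determined by the two words of each factorization straddling the window together with the cut positions, and the boundary cases you dispose of first are genuinely easy. The gap is that the ``finite-rigidity statement'' you invoke at the end \emph{is} the theorem — it is precisely the assertion that the unbounded integer parameters contribute only boundedly many windows — and the justification you offer for it is false. The first point of divergence in the alignment of $w_x$ against $w_{x'}$ is not determined by the pair of words alone: it depends on the relative shift $c_x-c_{x'}$, which is itself one of the parameters ranging over an interval of length up to $|\cW|=\max_{w\in\cW}|w|$. When the words involved are highly periodic, many different shifts are compatible with agreement on the entire past $(-\infty,0)$, and distinct shifts produce distinct divergence points and, a priori, distinct windows. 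Nothing in your argument rules this out, so the count you would obtain is $O((\#\cW)^7\cdot|\cW|)$ rather than a bound depending on $\#\cW$ alone.

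Establishing the rigidity is where all the work lies in the paper's proof, and it cannot be localized to the window $[-\langle\cW\rangle,0)$ or to two or three words back from the origin. The paper encodes each configuration as a \emph{simple double interpretation} of a suffix $d$ of $x_{(-\infty,0)}$ whose length is a priori unbounded (the middle part $\mathsf{d}_M\in\cW^*$ may contain many words), introduces an equivalence (sharing a suffix of length $\langle\cW\rangle$, hence the same window) and a reduction relation, and then proves via repeated applications of the Fine--Wilf periodicity lemma that any irreducible family of configurations of a fixed ``type'' $U\in\cW^5\times\N$ has at most $61\,\#\cW$ members; the exponent $7$ comes from the $(\#\cW)^5(\#\cW+1)$ types times this per-type bound. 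The periodicity analysis must compare configurations whose factorizations remain out of sync over long stretches, which is exactly the phenomenon your alignment identities record but do not control. To repair your proof you would need to supply this synchronization/periodicity argument; without it the central claim is unsupported.
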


Here, the important point is that, despite the fact that the length of the elements in $\cB$ is $\langle\cW\rangle$, the cardinality of $\cB$ depends only on $\# \cW$, and not on $\langle \cW\rangle$. 

Finally, we get a bound for the asymptotic classes of an $\cS$-adic subshift of finite alphabet rank. This result does not require minimality. 

\begin{theo}\label{theo:s_adic_finite_asymptotic}
Let $(X,T)$ be an $\cS$-adic subshift (not necessarily minimal) given by an everywhere growing directive sequence of finite alphabet rank $K$. Then, $(X,T)$ has at most $122K^7$ asymptotic classes.
\end{theo}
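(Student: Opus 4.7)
The strategy is to apply Theorem~\ref{prop:main_technical} along a subsequence of levels of the directive sequence $\boldsymbol{\tau}$ at which the alphabet is small. Since $\boldsymbol{\tau}$ has finite alphabet rank $K$, there is a strictly increasing sequence $(n_k)_{k\geq 0}$ with $\#\cA_{n_k}\leq K$. Define $\cW_k := \tau_{[0,n_k)}(\cA_{n_k}) \subseteq \cA_0^+$; then $\#\cW_k\leq K$, and because $\boldsymbol{\tau}$ is everywhere growing, $\langle\cW_k\rangle = \min_{a\in\cA_{n_k}}|\tau_{[0,n_k)}(a)|\to\infty$ as $k\to\infty$. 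A standard desubstitution property of $\cS$-adic subshifts ensures that each $x\in X$ is a shift of $\tau_{[0,n_k)}(y)$ for some $y\in X_{n_k}$, so $x$ is factorizable over $\cW_k$ for every $k$.

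Applying Theorem~\ref{prop:main_technical} with $\cW = \cW_k$ produces a set $\cB_k\subseteq \cA_0^{\langle\cW_k\rangle}$ of cardinality at most $122K^7$ such that any $x, x'\in X$ with $x_{(-\infty,0)} = x'_{(-\infty,0)}$ and $x_0 \neq x'_0$ satisfies $x_{[-\langle\cW_k\rangle,0)} \in \cB_k$. The key feature is that this cardinality bound is independent of $\langle\cW_k\rangle$, which itself tends to infinity.

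To bound the number of asymptotic classes, I pick for each class a canonical representative pair $(x^{(i)}, y^{(i)})$ in which the first disagreement between $x^{(i)}$ and $y^{(i)}$ occurs at position $0$; such a representative is obtained by an appropriate shift of the pair. Distinct asymptotic classes yield points $x^{(i)}$ lying in pairwise distinct left-asymptotic equivalence classes of $X$, so for $i\neq j$ the sequences $x^{(i)}$ and $x^{(j)}$ must disagree at arbitrarily negative positions. For $k$ large enough, the prefixes $x^{(i)}_{[-\langle\cW_k\rangle,0)}$ are thus pairwise distinct elements of $\cB_k$, forcing the number of classes to be at most $\#\cB_k \leq 122K^7$. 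The right-asymptotic case is handled by the symmetric (time-reversed) version of Theorem~\ref{prop:main_technical}.

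The crux of the argument is packaged inside Theorem~\ref{prop:main_technical}: once its uniform bound $\#\cB_k\leq 122K^7$ is granted, the remainder of the proof is a pigeonhole argument powered by the everywhere-growing hypothesis $\langle\cW_k\rangle\to\infty$. The only subtle point is the translation from asymptotic classes to distinct left prefixes, which rests on identifying each class with its underlying left-asymptotic equivalence class in $X$.
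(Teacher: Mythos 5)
Your proof is correct and follows essentially the same route as the paper's: apply Theorem~\ref{prop:main_technical} to the factorizations $\cW_k = \tau_{[0,n_k)}(\cA_{n_k})$, shift a representative pair of each class so that the first disagreement sits at the origin, and then use the uniform bound $\#\cB_k \leq 122K^7$ together with $\langle\cW_k\rangle\to\infty$ in a pigeonhole argument. One small misreading: in this paper the relation $\sim$ and hence ``asymptotic class'' is defined purely via negative asymptotics, so there is no ``right-asymptotic case'' to handle -- that closing sentence is superfluous; also, what is actually used is not that $x^{(i)}$ and $x^{(j)}$ disagree at arbitrarily negative positions but merely that $x^{(i)}_{(-\infty,0)}\neq x^{(j)}_{(-\infty,0)}$, after which choosing $\langle\cW_k\rangle$ larger than the finitely many disagreement depths does the job (the paper instead packages this as a double pigeonhole extracting a constant pair of indices).
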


\subsection{Organization} 
The paper is organized as follows. In the next section we give some background in topological and symbolic dynamics. In Section \ref{sec:interpretations} we introduce some special ingredients allowing to prove the main theorems: the notions of \emph{interpretation} and \emph{reducibility} of sets of words together with its  properties and the key Proposition \ref{lem:bound_on_irreducible}, whose technical proof is given in Section \ref{sec:main_proof}. 
In Section \ref{sec:main_results} we restate our main results and provide complete proofs. 

\section{Background in topological and symbolic dynamics}
\label{sec:Basics}

All the intervals we will consider consist of integer numbers, \textit{i.e.}, $[a,b] = \{k\in\Z:a\leq k\leq b\}$ with $\ a,b \in \Z$. For us, the set of natural numbers starts with zero, \textit{i.e.}, $\N=\{0,1,\dots\}$.

\subsection{Basics in topological dynamics}\label{subsec:topdyn}
A {\em topological dynamical system} (or just a system) is a pair $(X,T)$, where $X$ is a compact metric space and  $T\colon X \to X$ is a {\em homeomorphism} of $X$. The orbit of $x \in X$ is the set $\{T^nx : n \in \Z\}$. A point $x\in X$ is {\em periodic} if its orbit is a finite set and {\em aperiodic} otherwise. A topological dynamical system is {\em aperiodic} if any point $x\in X$ is aperiodic and is {\em minimal} if the orbit of every point is dense in $X$. We use the letter $T$ to denote the action of a topological dynamical system independently of the base set $X$.

An {\em automorphism} of the topological dynamical system $(X,T)$ is a homeomorphism $\varphi\colon X\to X$ such that $\varphi\circ T=T\circ \varphi$. We use the notation $\varphi\colon (X,T)\to (X,T)$ to indicate the automorphism. The set of all automorphisms of $(X,T)$ is denoted by $\mathrm{Aut}(X,T)$ and is called the {\em automorphism group} of $(X,T)$. It has a group structure given by the composition of functions. It is said that $\mathrm{Aut}(X,T)$ is \textit{virtually $\mathbb{Z}$} if the quotient $\mathrm{Aut}(X,T)/\langle T\rangle$ is finite, where $\langle T\rangle$ is the subgroup generated by $T$.

\subsection{Basics in symbolic dynamics}\label{subsec:symbdyn}
\subsubsection{Words and subshifts} \label{subsec:subsubshifts}

Let ${\mathcal A}$ be a finite set that we call {\em alphabet}. Elements in ${\mathcal A}$ are called {\em letters} or {\em symbols}. The set of finite sequences or {\em words} of length $\ell\in \N$ with letters in $\mathcal A$ is denoted by ${\mathcal A}^\ell$, the set of onesided sequences $(x_n)_{n\in \mathbb{N}}$  in ${\mathcal A}$ is denoted  by ${\mathcal A}^{\mathbb N}$
and the set of twosided sequences $(x_n)_{n\in \mathbb{Z}}$  in ${\mathcal A}$ is denoted by ${\mathcal A}^{\mathbb Z}$. 
Also, a word $w= w_1 \cdots w_{\ell} \in  {\mathcal A}^\ell$, with $\ell > 0$, can be seen as an element of the free monoid ${\mathcal A}^*$ endowed with the operation of concatenation (and whose neutral element is $1$, the empty word), and as an element of the free semigroup $\cA^+ \coloneqq \cA^*\setminus\{1\}$ of nonempty words. The integer $\ell$ is the {\em length} of $w$ and is denoted by $|w|=\ell$; the length of the empty word is $0$.

We write $\leq_p$ and $\leq_s$ for the relations in $\cA^*$ of being prefix and suffix, respectively. We also write $u <_p v$ (resp. $u <_s v$) when $u \leq_p v$ (resp. $u \leq_s v$) and $u \not= v$. When $v = sut$, we say that $u$ {\em occurs} in $v$ or that $u$ is a \emph{subword} of $v$. We also use these notions and notations when considering prefixes, suffixes and subwords of infinite sequences.

Let $\cW \subseteq \cA^*$ be a set of words and $u \in \cA^*$. We write $u\cW = \{uw : w \in \cW\}$, $\cW u = \{wu : w \in \cW\}$, and also
\begin{equation*}
\langle \cW\rangle \coloneqq \min_{w\in \cW}|w|
\qquad\text{and}\qquad
| \cW| \coloneqq \max_{w\in \cW}|w|.
\end{equation*}

The {\em shift map} $T \colon {\mathcal A}^{\mathbb Z} \to {\mathcal A}^{\mathbb Z}$ is defined by $T ((x_n)_{n\in \mathbb{Z}}) = (x_{n+1})_{n\in \mathbb{Z}}$. A {\em subshift} is a topological dynamical system $(X,T)$, where $X$ is a closed and $T$-invariant subset of ${\mathcal A}^{\mathbb Z}$ (we consider the product topology in ${\mathcal A}^{\mathbb Z}$) and $T$ is the shift map. Classically one identifies $(X,T)$ with $X$, so one says that $X$ itself is a subshift. When we say that a sequence  in a subshift is aperiodic, we implicitly mean that this sequence is aperiodic for the action of the shift.   

\subsubsection{Morphisms and substitutions}\label{subsubsec:morphisms}

Let $\cA$ and $\cB$ be finite alphabets and $\tau\colon\cA^+\to\cB^+$ be a morphism between the free semigroups that they define. Then, $\tau$ extends naturally to maps from $\cA^\mathbb{N}$ to itself and from $\cA^\mathbb{Z}$ to itself in the obvious way by concatenation (in the case of a twosided sequence we apply $\tau$ to positive and negative coordinates separately and we concatenate the results at coordinate zero).
We say that $\tau$ is {\em primitive} if for every $a\in\cA$, all letters $b \in \cB$ occur in $\tau(a)$. The minimum length of $\tau$ is the number
\begin{equation*}
\langle\tau\rangle \coloneqq \langle\tau(\cA)\rangle = \min_{a\in\cA}|\tau(a)|.
\end{equation*}

We observe that any map $\tau\colon \cA\to \cB^+$ can be naturally extended to a morphism (that we also denote by $\tau$) from $\cA^+$ to $\cB^+$ by concatenation, and we use this convention throughout the document. So, from now on, all maps between finite alphabets are considered to be morphisms between their associated free semigroups. 

\subsubsection{$\cS$-adic subshifts}\label{subsubsec:Sadicsubshifts}

We recall the definition of an {\em $\cS$-adic subshift} as stated in \cite{BSTY17}.
A {\em directive sequence}  $\boldsymbol{\tau} = (\tau_n \colon \mathcal{A}_{n+1}^+\to \mathcal{A}_n^+)_{n \geq 0}$ is a sequence of morphisms.

For $0\leq n<N$, we denote by $\tau_{[n,N)}$, the morphism $\tau_n \circ \tau_{n+1} \circ \dots \circ \tau_{N-1}$. We say $\boldsymbol{\tau}$ is {\em everywhere growing} if
\begin{equation*}\label{eq:everywhere_growing}
\lim_{N\to +\infty}\langle\tau_{[0,N)}\rangle = +\infty.
\end{equation*}
We say $\boldsymbol{\tau}$ is {\em primitive} if for any $n\in \N$ there exists $N>n$ such that $\tau_{[n,N)}$ is primitive. Observe that $\boldsymbol{\tau}$ is everywhere growing whenever $\boldsymbol{\tau}$ is primitive.

For $n\in \N$, we define
\begin{equation*}
X^{(n)}_{\boldsymbol{\tau}} = \big\{x \in \mathcal{A}_n^\Z :\
\mbox{$ \forall k\in\N$, $x_{[-k,k]}$ occurs in $\tau_{[n,N)}(a)$ for some $N>n$ and $a \in\mathcal{A}_N$}\big\}.
\end{equation*}
These sets clearly define  subshifts. The set $X_{\boldsymbol{\tau}} = X^{(0)}_{\boldsymbol{\tau}}$ is called the \emph{$\cS$-adic subshift} generated by $\boldsymbol{\tau}$ and  $X^{(n)}_{\boldsymbol{\tau}}$ is called the {\em $n$th level of the $\cS$-adic subshift generated by $\boldsymbol{\tau}$}.  
If $\boldsymbol{\tau}$ is everywhere growing, then every $X^{(n)}_{\boldsymbol{\tau}}$, $n\in\N$, is nonempty; if $\boldsymbol{\tau}$ is primitive, then $X^{(n)}_{\boldsymbol{\tau}}$ is minimal and nonempty for every $n\in\N$. There are nonprimitive directive sequences that generate minimal subshifts.

The relation between levels of an $\cS$-adic subshift is given by the following lemma.

\begin{lem}[\cite{BSTY17}, Lemma 4.2]\label{lem:desubstitution}
Let $\boldsymbol{\tau} = (\tau_n \colon \mathcal{A}_{n+1}^+\to \mathcal{A}_n^+)_{n \geq 0}$ be a directive sequence of morphisms. If $0\leq n < N$ and $x \in X_{\boldsymbol{\tau}}^{(n)}$, then there exist $y \in X_{\boldsymbol{\tau}}^{(N)}$ and $k \in \Z$ such that $x = T^k\tau_{[n,N)}(y)$.
\end{lem}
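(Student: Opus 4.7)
The plan is to construct $y$ as a compactness limit of \emph{desubstitutions} of longer and longer central subwords of $x$.

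By the defining property of $X_{\boldsymbol{\tau}}^{(n)}$, for each $k\in\N$ choose $M_k>n$ and $a_k\in\cA_{M_k}$ with $x_{[-k,k]}$ occurring in $\tau_{[n,M_k)}(a_k)$. First I would upgrade this to $M_k>N$ for $k$ large. Writing $\tau_{[0,N')} = \tau_{[0,n)}\circ\tau_{[n,N')}$ and setting $C = \max_{b\in\cA_n}|\tau_{[0,n)}(b)|$ gives $|\tau_{[n,N')}(a)|\geq |\tau_{[0,N')}(a)|/C$, so $\langle\tau_{[n,N')}\rangle \to \infty$ because $\boldsymbol{\tau}$ is everywhere growing. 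Hence if $M_k\leq N$ for arbitrarily large $k$, the length $|\tau_{[n,M_k)}(a_k)|$ would stay uniformly bounded, contradicting $|x_{[-k,k]}| = 2k+1$. So I may assume $M_k>N$.

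Next I would extract combinatorial data. Factor $\tau_{[n,M_k)} = \tau_{[n,N)}\circ\tau_{[N,M_k)}$, set $w_k = \tau_{[N,M_k)}(a_k)\in \cA_N^+$, and let $p_k\geq 0$ be a position at which $x_{[-k,k]}$ occurs inside $\tau_{[n,N)}(w_k)$. Write $w_k = b_1^{(k)}\cdots b_{\ell_k}^{(k)}$ with $b_j^{(k)}\in\cA_N$, and set $c_j^{(k)} = \sum_{i<j}|\tau_{[n,N)}(b_i^{(k)})|$. Let $j_k$ be the unique index with $c_{j_k}^{(k)}\leq p_k+k < c_{j_k+1}^{(k)}$, i.e.\ the letter whose $\tau_{[n,N)}$-image covers coordinate $0$ of $x$, and set $r_k = p_k+k - c_{j_k}^{(k)}$, so $0\leq r_k < \max_{b\in\cA_N}|\tau_{[n,N)}(b)|$. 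Since this bound and the alphabet $\cA_N$ are finite, a diagonal extraction yields a subsequence along which $r_k = r$ is constant and, for every fixed $i\in\Z$, $b_{j_k+i}^{(k)}$ is eventually equal to some $y_i\in\cA_N$; note $b_{j_k+i}^{(k)}$ is defined for all $i$ eventually, as $j_k$ lies at distance $\geq k/\max_{b\in\cA_N}|\tau_{[n,N)}(b)|$ from both ends of $w_k$. Passing the identification of $x_{[-k,k]}$ with the corresponding subword of $\tau_{[n,N)}(w_k)$ to the limit gives $x = T^{r}\tau_{[n,N)}(y)$. Finally, every central window $y_{[-L,L]}$ equals $b_{j_k-L}^{(k)}\cdots b_{j_k+L}^{(k)}$ for $k$ large, hence is a subword of $\tau_{[N,M_k)}(a_k)$ with $M_k>N$, so $y\in X_{\boldsymbol{\tau}}^{(N)}$ by definition.

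The hard part will be pure bookkeeping: nailing down the sign convention relating the offset $r_k$ to the shift exponent in the conclusion, and ensuring the ``central'' letter $y_0$ is chosen so that its $\tau_{[n,N)}$-image actually covers coordinate $0$ of $x$, so that the two-sided concatenation convention for $\tau_{[n,N)}(y)$ matches up. Once these conventions are pinned down, the compactness argument is routine; neither primitivity nor finite alphabet rank is used, and \emph{everywhere growing} is invoked solely to force $M_k>N$ eventually.
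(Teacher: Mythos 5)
The paper cites this lemma from \cite{BSTY17} and does not reproduce a proof, so there is no in-paper argument to compare against; your blind compactness construction is essentially the standard one, and it is correct in structure. The desubstitution step (tracking the letter of $w_k=\tau_{[N,M_k)}(a_k)$ whose $\tau_{[n,N)}$-image covers coordinate $0$, together with the within-letter offset $r_k$), the diagonal extraction using the finiteness of $\cA_N$ and of the offset range, the identification $x=T^r\tau_{[n,N)}(y)$ under the two-sided concatenation convention, and the verification that $y\in X^{(N)}_{\boldsymbol{\tau}}$ because each $y_{[-L,L]}$ is a subword of $\tau_{[N,M_k)}(a_k)$ with $M_k>N$ are all sound.

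The one thing you should correct: the lemma as stated makes no \emph{everywhere growing} assumption, and you neither have it nor need it. Your derivation of $\langle\tau_{[n,N')}\rangle\to\infty$ is a detour; the bound you actually want is elementary. The set $\{\,|\tau_{[n,M)}(a)| : n<M\le N,\ a\in\cA_M\,\}$ is finite, hence bounded by some $B<\infty$ depending only on $n$ and $N$. Since $|\tau_{[n,M_k)}(a_k)|\ge|x_{[-k,k]}|=2k+1$, any $k$ with $2k+1>B$ forces $M_k>N$. As written, your ``Hence'' after the $\langle\tau_{[n,N')}\rangle\to\infty$ claim does not actually follow from it (growth of the tail says nothing about a uniform bound when $M_k\le N$), so replacing that paragraph with the finite-max observation both fixes the logic and removes the unnecessary hypothesis, keeping your proof valid for arbitrary directive sequences exactly as the lemma demands.
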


We define the {\em alphabet rank} of a directive sequence $\boldsymbol{\tau}$ as 
$$AR(\boldsymbol{\tau}) = \liminf_{n\to +\infty} \#  \cA_n.$$

In this paper we will deal with systems 
$(X_{\boldsymbol{\tau}},T)$ given by 
an everywhere growing directive sequence 
$\boldsymbol{\tau}$ of finite alphabet rank. This kind of systems generalises the class of \emph{finite topological rank} systems stated for minimal Bratteli-Vershik systems and its symbolic factors (see for example \cite{DFM15}), but is somehow more natural and includes a broader spectrum of systems, not all minimal. It is worth mentioning that finite topological rank minimal systems are either subshifts or odometers \cite{DM}.

A \textit{contraction} of $\boldsymbol{\tau}= (\tau_n \colon \mathcal{A}_{n+1}^+\to \mathcal{A}_n^+)_{n \geq 0}$ is a sequence $\tilde{\boldsymbol{\tau}} = (\tau_{[n_k,n_{k+1})}\colon \cA_{n_{k+1}}^+\to \cA_{n_k}^+)_{k\geq0}$, where $0 = n_0 < n_1 < n_2 < \dots$. Observe that any contraction of $\boldsymbol{\tau}$ generates the same $\cS$-adic subshift $X_{\boldsymbol{\tau}}$. When $\boldsymbol{\tau}$ has finite alphabet rank, there exists a contraction $\tilde{\boldsymbol{\tau}} = (\tau_{[n_k,n_{k+1})}\colon \cA_{n_{k+1}}^+\to \cA_{n_k}^+)_{k\geq0}$ of $\boldsymbol{\tau}$ in which $\cA_{n_k}$ has cardinality $AR(\boldsymbol{\tau})$ for every $k\geq1$.

\section{Notion of {\em Interpretation}}\label{sec:interpretations}

In this section we introduce the concepts of \emph{interpretation} and \emph{double interpretation} of a word together with its basic properties. The definitions we provide here are variants of the same notion used seldom in combinatorics of words, see for example \cite{Lothaire}. The key Proposition \ref {lem:bound_on_irreducible}, where we provide a fundamental upper bound for the number of \emph{irreducible sets of simple double interpretations}, is announced here and proved in the last section of the article.

For the rest of this section we fix an alphabet $\cA$ and a finite set of nonempty words $\cW \subseteq \cA^+$. If $u,v,w \in \cA^*$ are such that $w = uv$, then we write $u = wv^{-1}$ and $v = u^{-1}w$.

\subsection{Interpretations and simple double interpretations}
\label{subsec:interpretations}

\begin{defi}\label{def:interpretation}
Let $d \in \cA^+$. A {\em $\cW$-interpretation} of $d$ is a sequence of words
$I =\mathsf{d}_L, \mathsf{d}_M, \mathsf{d}_R, a$ such that:
\begin{enumerate}
	\item $\mathsf{d}_M \in \cW^*$ and $a \in \cA$; 
	\item there exist $\mathsf{u}_L, \mathsf{u}_R \in \cW$ such that $1\not= \mathsf{d}_L \leq_s \mathsf{u}_L$, $\mathsf{d}_R a \leq_p \mathsf{u}_R$;
	\item $d = \mathsf{d}_L \mathsf{d}_M \mathsf{d}_R$.
\end{enumerate}
See Figure \ref{fig:defi_interpretation} for an illustration of this definition. Note that $\mathsf{d}_M$ and $\mathsf{d}_R$ can be the empty word. The extra letter $a$ will be crucial to handle asymptotic pairs and $\cW$-interpretations later.
\end{defi}

\begin{figure}[H]
\makebox[\textwidth][c]{\includegraphics[scale=1]{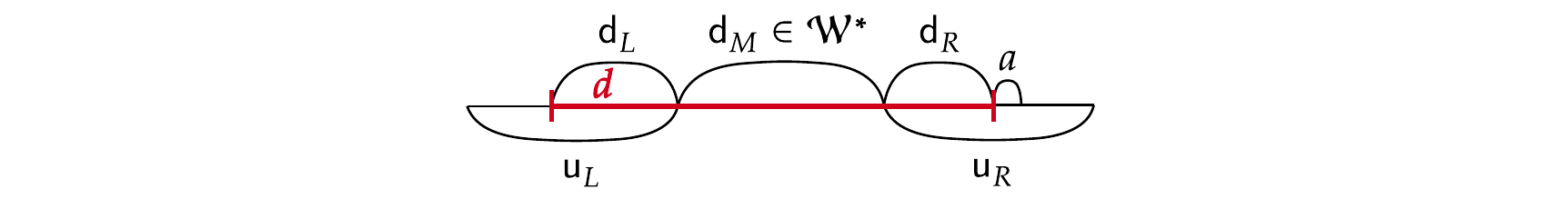}}
\caption{\label{fig:defi_interpretation} Diagram of the $\cW$-interpretation $I = \mathsf{d}_L,\mathsf{d}_M,\mathsf{d}_R,a$ of $d$ in Definition \ref{def:interpretation}.}
\end{figure}

If the context is clear, we will say {\em interpretation} instead of $\cW$-interpretation.
\medskip

Now we make an observation that will be useful when we want to inherit interpretations of a given word to some of its subwords. We state it as a lemma without proof.

\begin{lem}\label{lem:inherit}
Let $I = \mathsf{d}_L, \mathsf{d}_M, \mathsf{d}_R, a$ be a $\cW$-interpretation of $d \in \cA^+$. Suppose that $d' \leq_p d$ satisfy $|d'| \geq |\mathsf{d}_L|$. Then, $d'$ has a $\cW$-interpretation of the form $I' = \mathsf{d}_L, \mathsf{d}'_M, \mathsf{d}'_R, a'$ such that $d' a' \leq_p d a$.
\end{lem}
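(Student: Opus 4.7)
The plan is to exploit the assumption $|d'| \geq |\mathsf{d}_L|$ to factor $d' = \mathsf{d}_L e$ for some (possibly empty) word $e$. Indeed, $\mathsf{d}_L$ is itself a prefix of $d$, and combined with $d' \leq_p d$ this forces $e \leq_p \mathsf{d}_M \mathsf{d}_R$. The heart of the argument is then to split $e$ into a chunk lying in $\cW^*$ followed by a residual word that is a strict prefix of some element of $\cW$, and to choose the trailing letter $a'$ consistently with the prefix relation $d'a' \leq_p d a$.

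I split into two cases according to whether $e$ stays inside $\mathsf{d}_M$ or reaches into $\mathsf{d}_R$. If $|e| \geq |\mathsf{d}_M|$, then $\mathsf{d}_M \leq_p e$, so $e = \mathsf{d}_M e'$ with $e' \leq_p \mathsf{d}_R$; I set $\mathsf{d}'_M := \mathsf{d}_M$, $\mathsf{d}'_R := e'$, and take $a'$ to be the letter of $d a$ at position $|d'|$ (this is a letter of $\mathsf{d}_R$, or $a$ itself in the edge case $d'=d$). The required condition $\mathsf{d}'_R a' \leq_p \mathsf{u}_R$ then follows from $e'a' \leq_p \mathsf{d}_R a \leq_p \mathsf{u}_R$. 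If instead $|e| < |\mathsf{d}_M|$, I write $\mathsf{d}_M = w_1 \cdots w_k$ with $w_i \in \cW$ and take $j$ maximal with $|w_1\cdots w_j| \leq |e|$. Then $\mathsf{d}'_M := w_1\cdots w_j \in \cW^*$, and the residual $\mathsf{d}'_R$ (defined by $e = \mathsf{d}'_M \mathsf{d}'_R$) is a strict prefix of $w_{j+1}$; choosing $a'$ as the next letter of $w_{j+1}$ immediately after $\mathsf{d}'_R$ gives $\mathsf{d}'_R a' \leq_p w_{j+1} \in \cW$, and this letter coincides with the one at position $|d'|$ of $d$, so $d'a' \leq_p da$.

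In both cases the factor $\mathsf{d}_L$ is unchanged, so its nonemptiness and the existence of $\mathsf{u}_L \in \cW$ with $\mathsf{d}_L \leq_s \mathsf{u}_L$ are inherited for free from $I$. There is no genuine obstacle: the proof is a bookkeeping exercise, and the only care point is handling uniformly the boundary situations $|e| = 0$, $|e| = |\mathsf{d}_M|$, and $|d'| = |d|$, all of which fit into the two cases above without separate treatment.
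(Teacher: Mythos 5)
Your proof is correct. The paper states this lemma explicitly \emph{without proof} (it is treated as a routine observation), so there is no argument in the paper to compare against; the case split you give — on whether the extension $e$ of $\mathsf{d}_L$ inside $d'$ stays within $\mathsf{d}_M$ or reaches into $\mathsf{d}_R$ — is exactly the natural bookkeeping the authors are leaving to the reader, and you have handled the boundary situations ($e=1$, $|e|=|\mathsf{d}_M|$, $d'=d$) correctly.
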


The proofs of  our main theorems are based in a procedure allowing to reduce the so called \emph{double interpretations} (defined below) to a special class called \emph{simple double interpretations}. 

\begin{defi}\label{defi:double_interpretation}
Let $d\in \cA^+$. A $\cW$-double interpretation (written for short $\cW$-{\em d.i.}) of $d$ is a tuple $D = (I; I')$, where $I = \mathsf{d}_L,\mathsf{d}_M,\mathsf{d}_R,a$, $I'= \mathsf{d}'_L,\mathsf{d}'_M,\mathsf{d}'_R,a'$ are $\cW$-interpretations of $d$ such that $a\not=a'$. We say that $D$ is \textit{simple} if in addition
\begin{enumerate}
\item\label{defi:double_interpretation:1} $\mathsf{d}'_M \mathsf{d}'_R \leq_s \mathsf{d}_R$, and 
\item\label{defi:double_interpretation:2} $\mathsf{d}'_L \in \cW$ or $|\mathsf{d}'_L| \geq |\mathsf{u}|$ for some $\mathsf{u} \in \cW$ having $\mathsf{d}_R a$ as a prefix.
\end{enumerate}
\end{defi}
Again, if there is no ambiguity, we will omit $\cW$ and simply say {\em double interpretation} or  {\em d.i.}

Note that if $D$ is simple, then $D' = (I';I)$ is a d.i., which is not necessarily simple. Condition (1) in the previous definition says that $\mathsf{d}'_L$, the left-most word of $I'$, ``touches'' $\mathsf{d}_R$, the right-most word of $I$; see Figure \ref{fig:defi_di} for an illustration of this. Condition (2) is more technical and we will comment about it at the end of the Subsection \ref{sec:reducibility}. 

\begin{rem}\label{rem:di_basic_props}
From condition \eqref{defi:double_interpretation:2} in previous definition we have that $|\mathsf{d}'_L|, |d| \geq \langle \cW\rangle$, whenever $D$ is a simple $\cW$-d.i.
\end{rem}

\begin{figure}[H]
\makebox[\textwidth][c]{\includegraphics[scale=1]{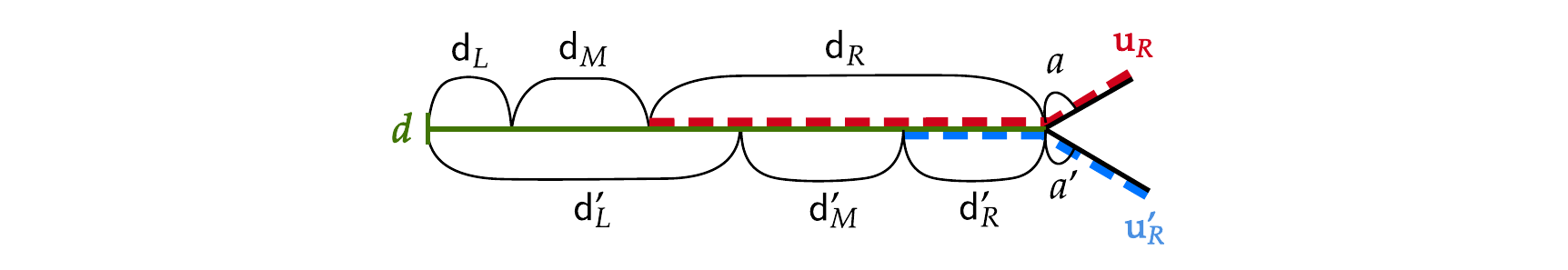}}
\caption{\label{fig:defi_di} Diagram of a d.i. of $d$ satisfying (1) in Definition \ref{defi:double_interpretation}. Here, $\mathsf{d}_R a \leq_p \mathsf{u}_R$ and $\mathsf{d}'_R a' \leq_p \mathsf{u}'_R$, where
$\mathsf{u}_R, \mathsf{u}'_R$ are the words given in condition (2) of Definition \ref{def:interpretation}.}
\end{figure}

The next lemma will be useful to build a \emph{simple} double interpretation from a word having a double interpretation.

\begin{lem}\label{lem:extract_di}
Let $D = (I=\mathsf{d}_L,\mathsf{d}_M,\mathsf{d}_R,a; I'=\mathsf{d}'_L, \mathsf{d}'_M, \mathsf{d}'_R, a')$ be a double interpretation of a word $d \in \cA^+$. Suppose that $\mathsf{d}'_L \in \cW$ and $|\mathsf{d}_L| \leq |\mathsf{d}'_L\mathsf{d}'_M|$. Then, there exists $e \leq_s d$ with a simple double interpretation.
\end{lem}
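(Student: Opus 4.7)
The plan is to choose a cut position $c \in [0,|d|)$, set $e = d[c:]$, and exhibit a simple $\cW$-double interpretation of $e$ built from restrictions of $I$ and $I'$ (possibly with their roles swapped). Write $\mathsf{d}_M = v_1 \cdots v_m$ and $\mathsf{d}'_M = w_1 \cdots w_k$ with all $v_i, w_j \in \cW$, and let $p_0 = 0$, $p_1 = |\mathsf{d}'_L|$, $p_2 = |\mathsf{d}'_L w_1|$, $\dots$, $p_{k+1} = |\mathsf{d}'_L \mathsf{d}'_M|$ denote the $I'$-boundary positions in $d$. The analysis splits according to whether $p_{k+1} \geq |\mathsf{d}_L \mathsf{d}_M|$ or $p_{k+1} < |\mathsf{d}_L \mathsf{d}_M|$, i.e., whether $|\mathsf{d}'_R| \leq |\mathsf{d}_R|$ or $|\mathsf{d}'_R| > |\mathsf{d}_R|$.

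In the first case I would let $j^*$ be the smallest index in $\{0,1,\dots,k\}$ with $p_{j^*+1} \geq |\mathsf{d}_L \mathsf{d}_M|$ and cut at $c = p_{j^*}$. Minimality forces $c < |\mathsf{d}_L \mathsf{d}_M|$, so the restrictions $(I|_e, I'|_e)$ produce a valid double interpretation of $e$. The new left word of $I'|_e$ is a full $\cW$-word ($\mathsf{d}'_L$ if $j^* = 0$, otherwise $w_{j^*}$), giving condition~(2) of simplicity, and a direct length count shows $|\mathsf{e}'_L| = p_{j^*+1} - p_{j^*} \geq |\mathsf{d}_L \mathsf{d}_M| - c = |\mathsf{e}_L \mathsf{e}_M|$, giving condition~(1).

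In the second case $\mathsf{d}_M$ is necessarily nonempty, and I would swap the two interpretations in the simple d.i., taking $(J,J') = (I'|_e, I|_e)$. Let $i^*$ be the smallest index in $\{0,1,\dots,m\}$ with $|\mathsf{d}_L v_1 \cdots v_{i^*}| \geq p_{k+1}$ (which exists since $|\mathsf{d}_L \mathsf{d}_M| > p_{k+1}$). If $i^* \geq 1$, cut at $c = |\mathsf{d}_L v_1 \cdots v_{i^*-1}|$; by the hypothesis $|\mathsf{d}_L| \leq p_{k+1}$ together with minimality of $i^*$, one has $c \in [|\mathsf{d}_L|, p_{k+1})$, so both $I|_e$ and $I'|_e$ restrict cleanly. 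The swapped pair satisfies $\mathsf{j}'_L = v_{i^*} \in \cW$ and $|\mathsf{j}'_L| \geq p_{k+1} - c = |\mathsf{j}_L \mathsf{j}_M|$ by the choice of $i^*$, yielding both simplicity conditions. The edge case $i^* = 0$ forces $|\mathsf{d}_L| = p_{k+1}$, whence $\mathsf{d}_L$ coincides with the concatenation $\mathsf{d}'_L w_1 \cdots w_k$; cutting at $c = p_k$ (or $c = 0$ when $k = 0$) then makes the restricted $\mathsf{j}'_L$ equal to $w_k$ (respectively $\mathsf{d}'_L$), and the length identity $|\mathsf{d}_M \mathsf{d}_R| = |\mathsf{d}'_R|$ yields condition~(1).

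The main technical hurdle is the bookkeeping around the validity of the restricted interpretations: ensuring that $\mathsf{j}_L$ and $\mathsf{j}'_L$ are nonempty (which amounts to $c$ landing strictly inside the $\cW^*$-parts of the parsings rather than at their right endpoint) and that the suffix-of-$\cW$-word property transfers through the restriction. The hypothesis $|\mathsf{d}_L| \leq |\mathsf{d}'_L \mathsf{d}'_M|$ is used precisely so that the swap cut $c$ lies in $[0, p_{k+1})$, while the hypothesis $\mathsf{d}'_L \in \cW$ supplies the clean $\cW$-word start needed for the non-swapped analysis.
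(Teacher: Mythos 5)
Your construction is correct, but it follows a genuinely different route from the paper. The paper argues by minimality: it assumes $d$ is the shortest suffix satisfying the hypotheses and splits into three cases according to whether $\mathsf{d}'_L <_p \mathsf{d}_L$, $\mathsf{d}_L <_p \mathsf{d}'_L$, or $\mathsf{d}_L = \mathsf{d}'_L$; in the first case it peels a single $\cW$-word off $\mathsf{d}'_M$ to contradict minimality, and in the other two it shows $(I;I')$ or $(I';I)$ is already simple. You instead compute the cut point in one shot: in the case $|\mathsf{d}'_R|\leq|\mathsf{d}_R|$ you take the last $I'$-boundary before $|\mathsf{d}_L\mathsf{d}_M|$, and in the opposite case you swap the roles and take the last $I$-boundary before $|\mathsf{d}'_L\mathsf{d}'_M|$. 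I checked the cases: the restricted interpretations are valid (the left pieces are nonempty suffixes of $\cW$-words because each cut lands strictly left of the relevant $\cW^*$-part), the new final letters are still $a\neq a'$, condition (2) of simplicity holds because the new primed left word is a full $\cW$-word, and condition (1) reduces to the length inequality you state. The only spot worth spelling out is the edge case $i^*=0$, $k\geq1$: there the swapped primed left word is the suffix of $\mathsf{d}_L$ of length $|w_k|$, and one must observe that $\mathsf{d}_L=\mathsf{d}'_L w_1\cdots w_k$ as words forces this suffix to be literally $w_k\in\cW$, which is what condition (2) needs; your phrasing asserts this and it is true. The trade-off: your argument is fully constructive (it names the suffix $e$ explicitly) at the cost of heavier boundary bookkeeping, whereas the paper's descent peels off one word per step and lets the well-ordering do the work, which is shorter to write but less explicit.
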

\begin{proof}
By considering the shortest suffix of $d$ verifying the hypotheses of the lemma we can assume without loss of generality that this suffix is $d$ itself. We consider three cases. 

\begin{enumerate}[wide, labelwidth=!, labelindent=0pt]
\item $\mathsf{d}'_L <_p \mathsf{d}_L$. This condition and the hypotheses of the lemma imply that $\mathsf{d}'_L <_p \mathsf{d}_L \leq_p \mathsf{d}'_L \mathsf{d}'_M$. Therefore, $\mathsf{d}'_M$ is not the empty word and we can write $\mathsf{d}'_M = uv$, with $u \in \cW$ and $v\in \cW^*$.
Then, $e\coloneqq \mathsf{d}'_M \mathsf{d}'_R <_s d$ has the interpretations $J = (\mathsf{d}'_L)^{-1}\mathsf{d}_L, \mathsf{d}_M, \mathsf{d}_R, a$ (here we are using that $(\mathsf{d}'_L)^{-1}\mathsf{d}_L \not= 1$) and $J' = u, v, \mathsf{d}'_R, a'$. But $u \in \cW$ and $|(\mathsf{d}'_L)^{-1}\mathsf{d}_L| \leq |(\mathsf{d}'_L)^{-1}\mathsf{d}'_L\mathsf{d}'_M| = |uv|$, so $e$ is a strict suffix of $d$ having a d.i. $E \coloneqq (J; J')$ verifying the hypotheses of the lemma, which contradicts the minimality of $d$. Thus, this case is incompatible with the hypotheses.  
\item $\mathsf{d}_L <_p \mathsf{d}'_L$. 
If $D$ is not a simple d.i. we have 
$\mathsf{d}_R <_s \mathsf{d}'_M \mathsf{d}'_R$ since $\mathsf{d}'_L \in \cW$ and then $\mathsf{d}_L <_p \mathsf{d}'_L \leq_p \mathsf{d}_L \mathsf{d}_M$. This implies that $\mathsf{d}_M$ is not the empty word. 
Then, we can write $\mathsf{d}_M = uv$ with $u\in \cW$ and $v\in\cW^*$. We have that 
$E = (J=\mathsf{d}_L^{-1}\mathsf{d}'_L, \mathsf{d}'_M, \mathsf{d}'_R, a'; \ J'=u,v, \mathsf{d}_R, a)$ is a d.i. of $e \coloneqq \mathsf{d}_M\mathsf{d}_R <_s d$ which, in addition, satisfies $u \in \cW$ and $|\mathsf{d}_L^{-1}\mathsf{d}'_L| \leq |uv|$. This contradicts the minimality of $d$ and $D$ must be simple. 
\item $\mathsf{d}_L = \mathsf{d}'_L$. 
If $\mathsf{d}_M = 1$ or 
$\mathsf{d}'_M = 1$, it follows directly from definition that $D=(I,I')$ or $D'=(I',I)$ are simple d.i. respectively. So we assume 
$\mathsf{d}_M \not= 1$ and $\mathsf{d}'_M \not= 1$. Therefore, we can write $\mathsf{d}_M = uv$ and $\mathsf{d}'_M = u'v'$, with $u,u' \in \cW$ and $v,v'\in\cW^*$. Let $e \coloneqq \mathsf{d}_M \mathsf{d}_R = \mathsf{d}'_M \mathsf{d'}_R$, $J = u,v,\mathsf{d}_R,a$ and $J' = u',v',\mathsf{d}'_R,a'$. Observe that when $|u'| \leq |u|$, $E = (J'; J)$ is a d.i. of $e$ satisfying $u \in \cW$ and $|u'| \leq |uv|$, and when $|u| \leq |u'|$, $E = (J;J')$ is a d.i. of $e$ satisfying $u' \in \cW$ and $|u| \leq |u'v'|$. In both cases we get a contradiction with the minimality of $d$. Then, in this case either $D$ or $D'$ is a simple d.i. of $d$.
\end{enumerate}
\end{proof}

A point $x \in \cA^\Z$ is {\em factorizable} over $\cW$ if there exist a point $y \in \cW^\Z$ and $k \in \Z$ such that $x_{[k,\infty)} = y_0y_1y_2\cdots$ and $x_{(-\infty,k)} = \cdots y_{-3}y_{-2}y_{-1}$. For example, if $\boldsymbol{\tau}$ is a directive sequence, $0\leq n < N$ and $x \in X_{\boldsymbol{\tau}}^{(n)}$, from Lemma \ref{lem:desubstitution} we see that $x$ is factorizable over $\tau_{[n,N)}(\cA_{N})$.

The last lemma of this subsection gives the relation between asymptotic pairs that are factorizable over the set of words $\cW$ and simple double interpretations over $\cW$. This lemma is crucial to reduce our combinatorial studies in next sections to the case of simple double interpretations.

\begin{lem}\label{lem:asymptotic_to_di}
If $x,x' \in \cA^\Z$ are factorizable over  $\cW$, $x_{(-\infty,0)} = x'_{(-\infty,0)}$ and $x_0 \not= x'_0$, then there exists a word $e \leq_s x_{(-\infty,0)}$ having a simple double interpretation over $\cW$.
\end{lem}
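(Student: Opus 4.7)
The plan is to produce a long enough suffix $d = x_{[N,0)}$ of $x_{(-\infty,0)}$ that carries an explicit $\cW$-double interpretation built from the two factorizations of $x$ and $x'$, and then apply Lemma~\ref{lem:extract_di} to reduce it to a simple double interpretation of some further suffix $e \leq_s d$.

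Since $x$ and $x'$ are factorizable over $\cW$, I would write $x = \cdots u_{-1} u_0 u_1 \cdots$ and $x' = \cdots u'_{-1} u'_0 u'_1 \cdots$ with $u_n, u'_n \in \cW$, and denote by $p_n, p'_n$ the starting positions of $u_n$ and $u'_n$. Let $i, j$ be the indices with $p_i \leq 0 < p_{i+1}$ and $p'_j \leq 0 < p'_{j+1}$, and set $a = x_0$, $a' = x'_0$, so that $a \not= a'$ by hypothesis. I will pick $N$ of the form $N = p_I$ for some $I < i$, satisfying also $N < p'_j$ and $p'_{M+1} \leq p_i$, where $M$ is defined by $p'_M \leq N < p'_{M+1}$. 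All three conditions are met as soon as $I$ is chosen sufficiently negative: the finiteness of $\cW$ bounds the word lengths uniformly, so $p'_{M+1}$ tends to $-\infty$ together with $N$.

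With $N$ fixed, set $d = x_{[N,0)}$; the two factorizations then determine the $\cW$-interpretations
\[
I = (u_I,\ u_{I+1}\cdots u_{i-1},\ x_{[p_i,0)},\ a), \qquad I' = (x_{[N,p'_{M+1})},\ u'_{M+1}\cdots u'_{j-1},\ x_{[p'_j,0)},\ a')
\]
of $d$. Since $a \not= a'$, the pair $D = (I; I')$ is a $\cW$-double interpretation. By construction $\mathsf{d}_L = u_I \in \cW$, while $|\mathsf{d}_L\mathsf{d}_M| = p_i - N$ and $|\mathsf{d}'_L| = p'_{M+1} - N$, so the inequality $p'_{M+1} \leq p_i$ translates into $|\mathsf{d}'_L| \leq |\mathsf{d}_L\mathsf{d}_M|$. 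Consequently the swapped pair $(I'; I)$ satisfies the hypotheses of Lemma~\ref{lem:extract_di}, which produces a suffix $e \leq_s d$ carrying a simple $\cW$-double interpretation; since $d \leq_s x_{(-\infty,0)}$, we conclude $e \leq_s x_{(-\infty,0)}$, as required.

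The only step that really demands care is the choice of $N$: it must simultaneously be a break of the $x$-factorization lying strictly to the left of $u_i$, force $M < j$ in the $x'$-factorization, and push the next $x'$-break $p'_{M+1}$ far enough to the left that the length inequality needed to invoke Lemma~\ref{lem:extract_di} holds. The finiteness of $\cW$ enters exactly at this point, through the uniform upper bound on word lengths that guarantees $p'_{M+1}$ tracks $N$ down to $-\infty$; once $N$ is fixed, the construction of $D$ and the application of Lemma~\ref{lem:extract_di} are immediate.
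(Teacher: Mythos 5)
Your proof is correct and follows essentially the same route as the paper: truncate $x_{(-\infty,0)}$ at a suitably chosen factorization break so that the resulting suffix $d$ carries a double interpretation meeting the hypotheses of Lemma~\ref{lem:extract_di}, then invoke that lemma. The only (immaterial) difference is that you align the cut with a break of the $x$-factorization and apply Lemma~\ref{lem:extract_di} to the swapped pair $(I';I)$, whereas the paper takes $l\geq 2|\cW|$ aligned with a break of the $x'$-factorization so that $\mathsf{d}'_L\in\cW$ and applies the lemma to $(I;I')$ directly.
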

\begin{proof}
Let $l \geq 2|\cW|$ and $d \coloneqq x_{[-l,0)}$. Then $d$ inherits in a natural way interpretations $I = \mathsf{d}_L,\mathsf{d}_M,\mathsf{d}_R,a$ and $I'=\mathsf{d}'_L,\mathsf{d}'_M,\mathsf{d}_R',a'$ from the factorizations of $x$ and $x'$ respectively. Since $a = x_0 \not= x'_0 = a'$, the tuple $D \coloneqq (I;I')$ is a d.i. Moreover, by choosing adequately $l$ we can suppose that $d'_L \in \cW$. Also, $|\mathsf{d}_L| \leq |\cW| \leq l-|\mathsf{d}'_R| = |\mathsf{d} '_L\mathsf{d}'_M|$, so the hypotheses of Lemma \ref{lem:extract_di} hold. Thus $d$ (and of course $x_{(-\infty,0)}$) has a suffix $e$ with a simple double interpretation over $\cW$. This proves the lemma. 
\end{proof}

\subsection{Reducible and irreducible simple double interpretations}
\label{sec:reducibility}

In this section we introduce the notions of \textit{reducible} and \textit{irreducible} sets of simple double interpretations.
In Proposition \ref{lem:bound_on_irreducible} we provide an upper bound for the size of irreducible sets of simple d.i. (the proof of this proposition is very technical and is postponed until Section \ref{sec:main_proof}). Thus, even if in some cases it is not necessary, most of the notions appearing in this section will be considered only for simple d.i.

For the rest of the paper each time we use a letter $D$ to denote a d.i. on $\cW$, then it double interprets the word $d \in \cA^+$ and is written $D = (I_D=\mathsf{d}_L,\mathsf{d}_M,\mathsf{d}_R,a_D;\ I'_D=\mathsf{d}'_L,\mathsf{d}'_M,\mathsf{d}'_R,a'_D)$.

\begin{defi}\label{def:DUset}
Given $U = (\mathsf{u}_M,\mathsf{u}_R,\mathsf{u}'_L,\mathsf{u}'_M,\mathsf{u}'_R,\ell) \in \cW^5\times \N$, we define $\cD_U$ as the set of simple $\cW$-d.i. $D$ such that:
\begin{enumerate}
    \item \label{eq:def_S_xyzu:1}
    either $\mathsf{d}_M \in \cW^*\mathsf{u}_M$ or $\mathsf{d}_M = 1$ and $\mathsf{d}_L \leq_s \mathsf{u}_M$;
    \item \label{eq:def_S_xyzu:2}
    $\mathsf{d}_R a_D \leq_p \mathsf{u}_R$ and $|\mathsf{u_R}| = \min\{|w| : \mathsf{d}_Ra_D \leq_p w,\ w\in\cW\}$;
    \item \label{eq:def_S_xyzu:3}
    $\mathsf{d}'_Ra'_D\leq_p\mathsf{u}'_R$, $\mathsf{d}'_L\leq_s\mathsf{u}'_L$ and $|\mathsf{u}'_L| = \min\{|w| : \mathsf{d}'_L \leq_s w,\ w\in\cW\}$;
    \item \label{eq:def_S_xyzu:4}
    $\mathsf{d}'_M = 1$ or $\mathsf{d}'_M = v_1\cdots v_n \in \cW^+$, $v_1 = \mathsf{u}'_M$ and $\max_{1\leq j\leq n}|v_j|=\ell$.
\end{enumerate}
\end{defi}

It is easy to see that
\begin{equation*}\label{eq:def_cD}
\cD \coloneqq \bigcup_{U\in\cW^5\times\N} \cD_U
\end{equation*}
is the set of all simple $\cW$-d.i. of words in $\cA^+$. Moreover, from \eqref{eq:def_S_xyzu:4} of Definition \ref{def:DUset}
we have that $\ell \in \{|w| : w\in\cW\}\cup\{0\}$ when $\cD_U \not= \emptyset$, so $\cD$ is the union of no more than $\#\cW^5(\#\cW+1)$ sets $\cD_U$.

\begin{figure}[H]
\makebox[\textwidth][c]{\includegraphics[scale=1]{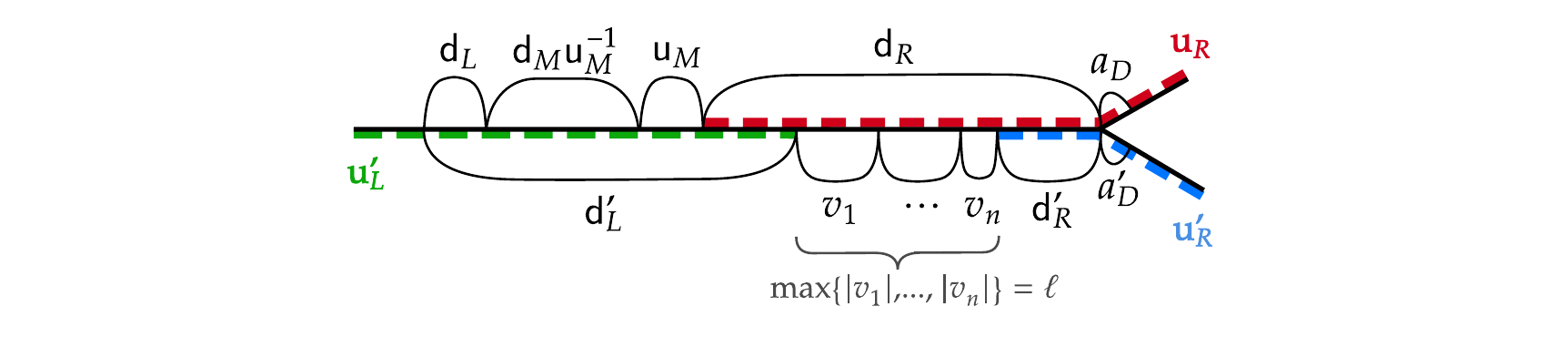}}
\caption{\label{fig:defi_U} Diagram illustrating  restrictions in Definition \ref{def:DUset} for a simple d.i. in the case $\mathsf{d}_M, \mathsf{d}'_M \not= 1$.}
\end{figure}

\begin{defi}\label{def:reduccion}
Let $D,E$ be simple d.i. on $\cW$. We say that,
\begin{enumerate}
\item $D$ is equivalent to $E$, and we write $D\sim E$, if $d$ and $e$ have a common suffix of length at least $\langle \cW\rangle$ (this makes sense by Remark \ref{rem:di_basic_props}).
\item $D$ reduces to $E$, and we write $D\Rightarrow E$, if $e <_s d$.
\end{enumerate}
\end{defi}
Observe that, when $D$ and $E$ are simple d.i. on $\cW$ with $D \Rightarrow E$, then, by Remark \ref{rem:di_basic_props}, $D\sim E$.

\begin{defi}
A subset  $\cD' \subseteq \cD$ of simple d.i. is {\em reducible} if
\begin{enumerate}
\item there are two different and equivalent elements in $\cD'$, or
\item there exists $D\in\cD'$ that reduces to some simple d.i.
\end{enumerate}
If $\cD'$ is not reducible, we say that it is {\em irreducible}.
\end{defi}

The main combinatorial result about irreducible sets of simple d.i. is the following proposition, whose proof will be carried out in Section \ref{sec:main_proof}.

\begin{prop}\label{lem:bound_on_irreducible}
Let $U \in \cW^5\times \N$. Any irreducible subset of $\cD_U$ has at most $61(\#\cW)$ elements.
\end{prop}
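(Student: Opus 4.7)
Fix $U = (\mathsf{u}_M, \mathsf{u}_R, \mathsf{u}'_L, \mathsf{u}'_M, \mathsf{u}'_R, \ell)\in\cW^5\times\N$ and let $\cD' \subseteq \cD_U$ be irreducible. The first observation is that, because the components of $U$ are fixed, every $D \in \cD_U$ is almost entirely determined by the six lengths $(|\mathsf{d}_L|, |\mathsf{d}_M|, |\mathsf{d}_R|, |\mathsf{d}'_L|, |\mathsf{d}'_M|, |\mathsf{d}'_R|)$ together with a choice of $\mathsf{u}_L \in \cW$ (the word of which $\mathsf{d}_L$ is a suffix, per Definition \ref{def:interpretation}(2)): indeed, the constraints in Definition \ref{def:DUset} force $\mathsf{d}_R$ and $a_D$ to be a prescribed prefix of $\mathsf{u}_R$, $\mathsf{d}'_R$ and $a'_D$ a prescribed prefix of $\mathsf{u}'_R$, $\mathsf{d}'_L$ a prescribed suffix of $\mathsf{u}'_L$, and the contents of $\mathsf{d}_M, \mathsf{d}'_M$ are read off from $d$ once their lengths are known. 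This gives a natural splitting of $\cD_U$ into at most $\#\cW$ subfamilies (one per choice of $\mathsf{u}_L$).

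The crux is bounding the number of $\sim$--equivalence classes. The length-$\langle\cW\rangle$ suffix of $d$ falls into one of three cases: (i) if $|\mathsf{d}_R|\geq \langle\cW\rangle$, it is the length-$\langle\cW\rangle$ suffix of the length-$|\mathsf{d}_R|$ prefix of $\mathsf{u}_R$, hence determined by $|\mathsf{d}_R|$; (ii) if $|\mathsf{d}_R|<\langle\cW\rangle$ and $\mathsf{d}_M\neq 1$, then $d$ ends with $\mathsf{u}_M\mathsf{d}_R$, and since $|\mathsf{u}_M|\geq \langle\cW\rangle$ the suffix is again determined by $|\mathsf{d}_R|$; (iii) if $\mathsf{d}_M=1$, then $d$ ends with $\mathsf{d}_L\mathsf{d}_R$ where $\mathsf{d}_L\leq_s\mathsf{u}_M$, so the suffix is determined by $(|\mathsf{d}_L|,|\mathsf{d}_R|)$. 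Since irreducibility of $\cD'$ forbids two equivalent elements, each class contributes at most one element of $\cD'$; the total number of classes produced across the three cases, summed over the $\#\cW$ choices of $\mathsf{u}_L$ and combined with the analogous trichotomy on the primed side, gives an $O(\#\cW)$ upper bound.

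To convert this into the explicit constant $61\#\cW$, I would use the second half of irreducibility (no reduction to any simple d.i.) to further constrain $\mathsf{d}_M$ and $\mathsf{d}'_M$. Whenever $\mathsf{d}_M = v_1\cdots v_n$ with $n\geq 2$, one should be able to apply a peeling argument in the spirit of the proof of Lemma \ref{lem:extract_di}: take $e = v_k\cdots v_n\mathsf{d}_R$ for an appropriate $k$, inherit the $I$--interpretation of $d$ restricted to $e$ (obvious), inherit the $I'$--interpretation by a suffix analogue of Lemma \ref{lem:inherit}, and check that the resulting pair is a simple d.i.\ of $e <_s d$. Because $a_D, a'_D$ are the last symbols already built in, the two inherited interpretations keep different ``next letters'', so the inherited pair is genuinely a d.i.; simplicity then follows either from the geometry of the peel or by one more application of Lemma \ref{lem:extract_di}. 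The same argument applies symmetrically to $\mathsf{d}'_M$. This caps the number of $\cW$--blocks in the middles to a small constant, so that within each suffix--equivalence class there is essentially a bounded number of shapes for $(\mathsf{d}_L,\mathsf{d}_M,\mathsf{d}_R,\mathsf{d}'_L,\mathsf{d}'_M,\mathsf{d}'_R)$ that can witness an irreducible element, and a careful bookkeeping of the subcases (which of $\mathsf{d}_M,\mathsf{d}'_M$ is empty, how the left ends $\mathsf{d}_L$ and $\mathsf{d}'_L$ align relative to the block boundaries in the middles, the choice of $\mathsf{u}_L$) yields $61\#\cW$ after summing.

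\textbf{Main obstacle.} The hard part is the reduction step: from a hypothetical ``extra'' element one must manufacture a \emph{simple} d.i.\ of a strict suffix, which means inheriting both interpretations \emph{simultaneously} and verifying the asymmetric conditions (1)--(2) of Definition \ref{defi:double_interpretation}. In particular one must show that the inherited $\mathsf{e}'_L$ still touches $\mathsf{e}_R$ and that either $\mathsf{e}'_L \in \cW$ or the minimality condition on $\mathsf{u}$ with $\mathsf{e}_Ra_E$ as prefix still holds. Tracking these conditions across all possible alignments of the two interpretations is the technical heart of the argument and is exactly where the many subcases (and hence the explicit constant $61$) come from.
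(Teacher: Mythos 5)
There is a genuine gap, and it sits exactly at the point your proposal treats as routine. You correctly observe that, for fixed $U$, the length-$\langle\cW\rangle$ suffix of $d$ is determined by $|\mathsf{d}_R|$ (or by $(|\mathsf{d}_L|,|\mathsf{d}_R|)$ when $\mathsf{d}_M=1$), and that irreducibility allows at most one element per $\sim$-class. But the conclusion that ``the total number of classes \ldots gives an $O(\#\cW)$ upper bound'' does not follow: $|\mathsf{d}_R|$ ranges over up to $|\mathsf{u}_R|$ values, so a priori there could be as many as $|\mathsf{u}_R|$ pairwise inequivalent elements, and $|\mathsf{u}_R|$ depends on the lengths of words in $\cW$, which is precisely the dependence the proposition must exclude. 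Showing that only $O(\#\cW)$ distinct values of $|\mathsf{d}_R|$ (equivalently, of the overlap $\mathsf{\tilde{d}}=(\mathsf{d}_L\mathsf{d}_M)^{-1}\mathsf{d}'_L$) can coexist in an irreducible set is the entire content of the proposition, not a bookkeeping afterthought. The paper does this by totally ordering $\cD'$ via the prefix order on the overlaps $\mathsf{\tilde{d}}$, introducing the nested filtration $\cD'(j)=\{D:|\mathsf{d}_R|<|\mathsf{\tilde{d}}(j)|\}$, and running two separate periodicity arguments based on the Fine and Wilf lemma (Lemmas \ref{claim:cota_sincronos} and \ref{claim:regularidad} plus the final pigeonhole step): coexisting overlaps force common periods on $\mathsf{u}_R$ and $\mathsf{u}'_L$, and too many of them force two elements into condition \ref{lem:redux_if:prefix} or \ref{lem:redux_if:alpha_prime} of Lemma \ref{lem:redux_if}, hence reducibility. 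Nothing playing this role appears in your outline.

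Your proposed substitute mechanism, capping the number of $\cW$-blocks in $\mathsf{d}_M$ and $\mathsf{d}'_M$ by a peeling argument, would not close the gap even if it worked: the bound you need is on the set of admissible overlaps/right-lengths, not on the number of blocks in the middles (which can be arbitrarily large in an irreducible set without affecting $|\mathsf{d}_R|$). Moreover the peeling step itself is doubtful as stated: after removing $v_1$ from $\mathsf{d}_M$, the inherited left word of the primed interpretation shrinks, and there is no reason it should still satisfy condition \eqref{defi:double_interpretation:2} of Definition \ref{defi:double_interpretation}; Lemma \ref{lem:extract_di} only applies under hypotheses ($\mathsf{d}'_L\in\cW$ and $|\mathsf{d}_L|\leq|\mathsf{d}'_L\mathsf{d}'_M|$) that you have not verified for the peeled word. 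So the argument as proposed would fail; the missing idea is the periodicity analysis of the overlaps.
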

The use of condition (2) of Definition \ref{defi:double_interpretation} appears during the proof of this proposition. This proof consists in directly showing that sets $\cD' \subseteq \cD_U$ with more than $61(\#\cW)$ elements are reducible. For this, one finds elements in $\cD'$ that are equivalent or can be reduced. In this process, one observes that eliminating condition (2) in the definition of simple d.i. has two opposite effects. On one hand, it should be easier to find a reduction of a given simple d.i., since more d.i. are simple; but on the other hand, without condition (2) being simple means less structure, so it is more difficult to actually find the desired reductions during the proof. Balancing this trade-off is the reason behind the technical condition (2). It is worth mentioning that this condition (2) is only used in the proof of Lemma \ref{claim:regularidad}.

\section{Proof of main results}
\label{sec:main_results}

In this section we prove our main results. As we commented in the introduction, the proof of Theorem \ref{theo:main} is based on two general steps: first we use a proposition from \cite{DDPM15} relating the number of asymptotic components with the ``size'' of the automorphism group and secondly we develop a complete combinatorial analysis of the asymptotic classes arising in an $\cS$-adic subshift of finite alphabet rank.

Let $(X,T)$ be a topological dynamical system. Two points $x,x' \in X$ are (negatively) {\em asymptotic} if $\lim_{n\to -\infty}\mathrm{dist}(T^nx,T^nx') = 0$. We define the relation $\sim$ in $X$ as follows: $x \sim x'$ whenever $x$ is asymptotic to $T^kx'$ for some $k\in\Z$. It is easy to see that $\sim$ is an  equivalence relation. An equivalence class for $\sim$ that is not the orbit of a single point is called an {\em asymptotic class}, and we write $\mathrm{Asym}(X,T)$ for the set of asymptotic classes of $(X,T)$. Observe that if $(X,T)$ is a subshift, then $x \sim x'$ if and only if $x_{(-\infty,k)} = x'_{(-\infty,\ell)}$ for some $k,\ell \in \Z$.

The following proposition, which is a direct consequence of Corollary 3.3 in \cite{DDPM15}, gives a relation between the number of asymptotic classes and the cardinality of $\mathrm{Aut}(X,T)/\langle T\rangle$ under conditions that any infinite minimal subshift satisfies.

\begin{prop}\label{prop:action_autos_asym}
Let $(X,T)$ be a topological dynamical system. Assume there exists a point $x_0 \in X$ with $\omega(x_0) \coloneqq \bigcap_{n\geq0}\overline{\{T^kx_0:k\geq n\}} = X$ that is asymptotic to a different point. Then, $\#\mathrm{Aut}(X,T)/\langle T\rangle \leq \#\mathrm{Asym}(X,T)!$.
\end{prop}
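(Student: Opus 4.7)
The plan is to exploit the natural action of $\mathrm{Aut}(X,T)$ on the set $\mathrm{Asym}(X,T)$. Specifically, I would define a homomorphism $\Phi:\mathrm{Aut}(X,T)\to\mathrm{Sym}(\mathrm{Asym}(X,T))$ by sending $\varphi$ to the permutation it induces on asymptotic classes, and then show that $\ker\Phi=\langle T\rangle$. This produces an injection of $\mathrm{Aut}(X,T)/\langle T\rangle$ into $\mathrm{Sym}(\mathrm{Asym}(X,T))$, and the bound $\#\mathrm{Aut}(X,T)/\langle T\rangle\le\#\mathrm{Asym}(X,T)!$ follows at once from $|\mathrm{Sym}(S)|=|S|!$.

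To see that $\Phi$ is well-defined, I would check that any automorphism preserves the ``asymptotic'' relation: since $\varphi$ commutes with $T$ and is uniformly continuous on the compact metric space $X$, $d(T^nx,T^ny)\to 0$ as $n\to-\infty$ implies $d(T^n\varphi(x),T^n\varphi(y))=d(\varphi(T^nx),\varphi(T^ny))\to 0$. Thus $\varphi$ permutes the $\sim$-classes; since it also sends orbits to orbits, it maps asymptotic classes to asymptotic classes, yielding $\Phi$. The inclusion $\langle T\rangle\subseteq\ker\Phi$ is immediate because $Tx\sim x$ for every $x\in X$ by the definition of $\sim$.

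The crux is the reverse inclusion. For $\varphi\in\ker\Phi$, the class $[x_0]_\sim$ is fixed, so $\varphi(x_0)$ is asymptotic to $T^kx_0$ for some $k\in\Z$. Since $T\in\ker\Phi$, replacing $\varphi$ by $T^{-k}\varphi$ keeps us in $\ker\Phi$ and we may assume that $\varphi(x_0)$ is asymptotic to $x_0$. The goal is then to show $\varphi(x_0)=x_0$: once this is established, the forward orbit of $x_0$ is dense in $X$ (from $\omega(x_0)=X$), and $\varphi\circ T^n=T^n\circ\varphi$ forces $\varphi$ to coincide with the identity on this dense orbit; continuity then gives $\varphi=\mathrm{id}$, whence the original $\varphi$ equals $T^k\in\langle T\rangle$.

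The main obstacle is the implication ``$\varphi(x_0)$ asymptotic to $x_0\Rightarrow\varphi(x_0)=x_0$'', and this is precisely where the transitivity hypothesis $\omega(x_0)=X$ is essential. The idea would be: if $\varphi(x_0)\ne x_0$, pick $\varepsilon>0$ with $d(\varphi(x_0),x_0)\ge\varepsilon$, while the asymptotic relation ensures $d(T^n\varphi(x_0),T^nx_0)<\varepsilon/3$ for all $n$ below some threshold $N_0$; using $\omega(x_0)=X$ to produce forward returns $T^{n_j}x_0$ to a small neighbourhood of $\varphi(x_0)$ and then pulling them back by $\varphi^{-1}$ (which also preserves asymptoticity) should violate the threshold $N_0$ and yield a contradiction. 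The delicate implementation of this rigidity argument is the content of \cite[Corollary~3.3]{DDPM15}, from which the proposition follows as a direct consequence.
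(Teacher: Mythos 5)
Your overall framework is exactly the right one, and it coincides with what the paper does: the paper in fact offers no proof of this proposition at all, stating it as a direct consequence of \cite[Corollary 3.3]{DDPM15}, and the content of that corollary is precisely the group action you describe, with $\langle T\rangle\subseteq\ker\Phi$ immediate and $\ker\Phi\subseteq\langle T\rangle$ the crux. One caveat on your sketch of the crux. The standard argument does not pass through the intermediate claim ``$\varphi(x_0)$ asymptotic to $x_0$ implies $\varphi(x_0)=x_0$''; rather, one shows directly that $\varphi$ fixes every point of $\omega(x_0)=X$: writing $y=\lim_j T^{n_j}x_0$ and using $d(T^{n}\varphi(x_0),T^{n}x_0)\to0$ along the same direction in which the orbit of $x_0$ accumulates on $y$, one gets $\varphi(y)=\lim_j T^{n_j}\varphi(x_0)=\lim_j T^{n_j}x_0=y$, which yields $\varphi=\mathrm{id}$ (and in particular $\varphi(x_0)=x_0$) in one stroke. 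Your proposed route via ``forward returns to a neighbourhood of $\varphi(x_0)$ pulled back by $\varphi^{-1}$'' is vaguer and, as written, collides with a direction mismatch: with this paper's conventions the asymptotic relation controls $T^n$ as $n\to-\infty$, while $\omega(x_0)=X$ only produces returns as $n\to+\infty$ (for the intended application to minimal subshifts every point is transitive in both directions, so this is harmless there). Since you ultimately defer this step to \cite[Corollary 3.3]{DDPM15}, exactly as the paper does, this is a presentational weakness rather than a genuine gap.
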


Now we prove our first combinatorial theorem.

\begin{ctheo}[\ref{prop:main_technical}]
Let $\cW \subseteq \cA^+$ be a set of nonempty words. Then, there exists $\cB \subseteq \cA^{\langle\cW\rangle}$ with
$\#\cB \leq 122(\# \cW)^7$ such that:
if $x,x' \in \cA^\mathbb{Z}$ are factorizable over $\cW$, $x_{(-\infty,0)} = x'_{(-\infty,0)}$ and $x_0\not= x'_0$, then $x_{[-\langle \cW\rangle,0)} \in \cB$.
\end{ctheo}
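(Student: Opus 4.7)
The plan is to apply Lemma \ref{lem:asymptotic_to_di} to reduce the statement to a purely combinatorial count on simple $\cW$-double interpretations. Lemma \ref{lem:asymptotic_to_di} furnishes a suffix $e\leq_s x_{(-\infty,0)}$ possessing a simple $\cW$-d.i., and Remark \ref{rem:di_basic_props} gives $|e|\geq\langle\cW\rangle$, so $x_{[-\langle\cW\rangle,0)}$ is precisely the length-$\langle\cW\rangle$ suffix of $e$. It therefore suffices to construct $\cB\subseteq\cA^{\langle\cW\rangle}$ of size at most $122(\#\cW)^7$ containing the length-$\langle\cW\rangle$ suffix of every word admitting a simple $\cW$-d.i.

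I would build $\cB$ by decomposing $\cD=\bigcup_{U\in\cW^5\times\N}\cD_U$, where only at most $\#\cW^5(\#\cW+1)$ summands are nonempty (as recorded after Definition \ref{def:DUset}), and producing one small irreducible ``skeleton'' inside each $\cD_U$. The key observation is that $\Rightarrow$ is well-founded on $\cD$, since each reduction strictly decreases $|d|$, while every simple d.i. satisfies $|d|\geq\langle\cW\rangle$. Let $\cD^{\mathrm{irr}}$ denote the $\Rightarrow$-minimal simple d.i.\ (those not reducing to any simple d.i.). Iterated reduction maps any $D\in\cD$ to some $E^\star\in\cD^{\mathrm{irr}}$ with $D\sim E^\star$: use that $\Rightarrow$ implies $\sim$ together with transitivity of $\sim$, which holds because any two length-$\geq\langle\cW\rangle$ suffixes of the same word are comparable. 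For each $U$, choose a transversal $\cS_U$ for $\sim$ restricted to $\cD^{\mathrm{irr}}\cap\cD_U$. By construction $\cS_U$ has no two distinct $\sim$-equivalent elements and no element that reduces, so it is irreducible; Proposition \ref{lem:bound_on_irreducible} then gives $\#\cS_U\leq 61\#\cW$.

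Assembly: for every $D\in\cD$, let $E^\star\in\cD^{\mathrm{irr}}\cap\cD_{U^\star}$ arise from iterated reduction and pick $F\in\cS_{U^\star}$ with $E^\star\sim F$; transitivity yields $D\sim F$, so $d$ and $f$ share their length-$\langle\cW\rangle$ suffix. Defining $\cB$ as the set of length-$\langle\cW\rangle$ suffixes of $f$ as $F$ ranges over $\bigcup_U\cS_U$, and using $\#\cW+1\leq 2\#\cW$, one obtains
$$\#\cB\leq \#\cW^5(\#\cW+1)\cdot 61\#\cW\leq 122(\#\cW)^7,$$
as required. The genuine work is entirely concentrated in Proposition \ref{lem:bound_on_irreducible}; once that is taken as a black box, the argument above is essentially bookkeeping, and I do not anticipate further obstacles beyond verifying that the constants line up as stated.
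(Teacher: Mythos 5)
Your proof is correct and follows essentially the same strategy as the paper: reduce to simple double interpretations via Lemma \ref{lem:asymptotic_to_di}, decompose $\cD$ into the finitely many nonempty $\cD_U$, iterate $\Rightarrow$ to a $\Rightarrow$-minimal representative, and invoke Proposition \ref{lem:bound_on_irreducible} to bound the size of the resulting irreducible set. The only cosmetic difference is that the paper fixes a maximal-size irreducible subset of each $\cD_U$ and derives from maximality the existence of an equivalent element, whereas you construct that set explicitly as a transversal of $\sim$ on the $\Rightarrow$-minimal elements of $\cD_U$ (these two choices coincide, since any irreducible set must consist of $\Rightarrow$-minimal elements with pairwise inequivalent length-$\langle\cW\rangle$ suffixes).
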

As will be clear from the proof, the bound ``$122(\# \cW)^7$'' is not necessarily optimal. Here, the important point is that, despite the fact that the length of the elements in $\cB$ is $\langle\cW\rangle$, the cardinality of $\cB$ depends only on $\# \cW$, and not on $\langle \cW\rangle$. 
\begin{proof}
We start by defining the set $\cB$. For each $U = (\mathsf{u}_M,\mathsf{u}_R,\mathsf{u}'_L,\mathsf{u}'_M,\mathsf{u}'_R,\ell) \in \cW^5\times \N$, fix $\cD'_U \subseteq \cD_U$ an irreducible subset of maximal size (we consider the empty set as an irreducible set, so there always exists such set $\cD'_U$). We define
\begin{equation*}
\cB \coloneqq 
\big\{w \in\cA^{\langle\cW\rangle}: 
\exists\ U \in \cW^5\times \N,\ D \in \cD'_U,\ w\leq_s d \big\},
\end{equation*}
where in this set $d\in\cA^+$ represents the word that is double interpreted by $D$.
We note that this makes sense because $|d|\geq\langle \cW\rangle$ for all simple d.i. As we observed previously, we have $\ell \in \{|w|:w\in\cW\}\cup\{0\}$ when $\cD_U$ is nonempty. Thus, there are no more than $\#\cW^5(\#\cW+1)$ choices for $U$ such that $\cD_U$ is nonempty. Using this and Proposition \ref{lem:bound_on_irreducible} we get:
\begin{align*}
\#\cB &\leq 61\# \cW\cdot \#\{U\in \cW^5\times\N:\cD_U\not=\emptyset\}, \\
&\leq 61\# \cW\cdot \# \cW^5(\# \cW + 1) \leq 122(\#\cW)^7.
\end{align*}
It left to prove the main property of the theorem. In this purpose, 
let $x,x' \in \cA^\Z$ be factorizable over $\cW$ with $x_{(-\infty,0)} = x'_{(-\infty,0)}$ and $x_0\not= x'_0$.
From Lemma \ref{lem:asymptotic_to_di} we can find a simple d.i. $D$ of $d \leq_s x_{(-\infty,0)}$. Let
\begin{equation*}
D \eqqcolon D(0) \Rightarrow D(1) \Rightarrow D(2) \Rightarrow \dots \Rightarrow D(n)
\end{equation*}
be a sequence of reductions that starts with $D$ (where, possibly, $n=0$ and $D$ has no reduction). We write, for convenience, $D(j) = (I(j);I'(j))$ and $d(j)$ for the word that is double interpreted by $D(j)$. Since $|d(0)| > |d(1)| > \dots$, any sequence like this ends after a finite number of steps. In particular, we can take (and we \textit{are} taking) this sequence so that $n$ is maximal. This implies that $D(n)$ has no reduction.

Since $\cD = \bigcup_{U \in \cW^5\times \N} \cD_U$, we can find $U \in \cW^5\times \N$ satisfying $D(n) \in \cD_U$. We claim that there is a word $e$ with a simple d.i. $E = (I_E;I'_E)\in\cD'_U$ such that $D(n)$ is equivalent to $E$. Indeed, if $D(n)\in\cD'_U$ then, since $D(n)$ is equivalent to itself, we can take $E\coloneqq D(n)$. If $D(n)$ is not in $\cD'_U$, then, from the maximality of $\cD'_U$ we see that $\cD'_U \cup \{D(n)\}$ is reducible. Since $D(n)$ has no reduction and $\cD'_U$ is irreducible, there exists $E \in \cD'_U$ equivalent to $D(n)$. This proves the claim. 

Then, using the definitions of reduction and equivalence of simple d.i., we have that the suffix $w \in \cA^{\langle\cW\rangle}$ of $e$ satisfies
\begin{align*}
w \leq_s d(n) <_s
d(n-1) <_s \dots <_s d(0) \leq_s x_{(-\infty,0)},
\end{align*}
and $w \in \cB$ since $E \in \cD'_U$. This finishes the proof.
\end{proof}

Now we have all the ingredients to compute the number of asymptotic classes in the case of $\cS$-adic subshifts of finite alphabet rank.

\begin{ctheo}[\ref{theo:s_adic_finite_asymptotic}]
Let $(X,T)$ be an $\cS$-adic subshift given by an everywhere growing directive sequence of alphabet rank $K$. Then, $(X,T)$ has at most $122K^7$ asymptotic classes.
\end{ctheo}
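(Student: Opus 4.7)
The plan is to bound the number of asymptotic classes by combining Theorem \ref{prop:main_technical} with Lemma \ref{lem:desubstitution} via a pigeonhole argument: each asymptotic class will be encoded, at every ``depth'' $k$, by a suffix of a canonical representative, and the suffix sets will be uniformly small thanks to Theorem \ref{prop:main_technical}.

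\textbf{Setup.} Using that $\liminf_{n \to +\infty} \#\cA_n = K$, I would pick a strictly increasing sequence $(n_k)_{k\geq 1}$ with $\#\cA_{n_k} \leq K$ for every $k$, and set $\cW_k \coloneqq \tau_{[0,n_k)}(\cA_{n_k}) \subseteq \cA_0^+$, so that $\#\cW_k \leq K$. Since $\boldsymbol{\tau}$ is everywhere growing, $\langle\cW_k\rangle = \langle\tau_{[0,n_k)}\rangle \to +\infty$ as $k\to +\infty$. Applying Theorem \ref{prop:main_technical} to each $\cW_k$ yields a set $\cB_k \subseteq \cA_0^{\langle\cW_k\rangle}$ with $\#\cB_k \leq 122(\#\cW_k)^7 \leq 122K^7$. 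Moreover, by Lemma \ref{lem:desubstitution}, every $x\in X$ may be written as $T^j\tau_{[0,n_k)}(y)$ for some $j\in\Z$ and $y\in X^{(n_k)}_{\boldsymbol{\tau}}$ (which is nonempty by the everywhere-growing assumption); hence every $x\in X$ is factorizable over $\cW_k$.

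\textbf{Aligned representative of a class.} For each asymptotic class $C\in\mathrm{Asym}(X,T)$, since $C$ is not a single orbit, it contains two points lying in different orbits, one asymptotic to some shift of the other; shifting so that the first index of disagreement is $0$ produces a pair $y_C,y'_C\in C$ satisfying $y_{C,(-\infty,0)} = y'_{C,(-\infty,0)}$ and $y_{C,0} \neq y'_{C,0}$. Theorem \ref{prop:main_technical} applied to $\cW_k$ then guarantees that the suffix $w_k(C) \coloneqq y_{C,[-\langle\cW_k\rangle,0)}$ belongs to $\cB_k$ for every $k\geq 1$.

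\textbf{Pigeonhole and conclusion.} Suppose for contradiction that there are pairwise distinct asymptotic classes $C_1,\dots,C_m$ with $m > 122K^7$. For every $k$ the map $i\mapsto w_k(C_i)$ cannot be injective into $\cB_k$, so there exists a pair $i_k<j_k$ with $w_k(C_{i_k}) = w_k(C_{j_k})$. Since there are only $\binom{m}{2}$ such pairs but infinitely many values of $k$, some single pair $(i,j)$ occurs along an infinite subsequence. Because $\langle\cW_k\rangle\to +\infty$, this forces $y_{C_i,(-\infty,0)} = y_{C_j,(-\infty,0)}$, hence $y_{C_i}$ is asymptotic to $y_{C_j}$ and therefore $C_i = C_j$, a contradiction.

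\textbf{Main obstacle.} The hard combinatorial work is already encapsulated in Theorem \ref{prop:main_technical}, so what remains here is essentially bookkeeping. The only point I would need to verify carefully is the existence of an aligned representative pair for each class, which follows from the standard observation that two asymptotic points in distinct orbits must first disagree at some finite coordinate; translating to that coordinate produces the desired alignment.
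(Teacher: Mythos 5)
Your proposal is correct and follows essentially the same route as the paper: apply Theorem \ref{prop:main_technical} to $\cW_k = \tau_{[0,n_k)}(\cA_{n_k})$ along a subsequence of levels where the alphabet has size at most $K$, align a representative pair for each class at the first coordinate of disagreement, and run a double pigeonhole over the uniformly bounded sets $\cB_k$ combined with $\langle\cW_k\rangle\to+\infty$. The paper merely packages the same argument as a standalone claim bounding $\#\{x_{(-\infty,0)} : (x,y)\in\mathcal{P}\}$ and uses a contraction instead of a subsequence, which are cosmetic differences.
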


\begin{proof}
Set $K' =  122K^7$. We are going to prove the following stronger result.
\begin{claim}
Let $\mathcal{P}$ be the set of pairs $(x,y) \in X\times X$ such that $x_{(-\infty, 0)} = y_{(-\infty,0)}$ and $x_0 \not= y_0$. Then, $\#\{x_{(-\infty,0)} : (x,y) \in \mathcal{P}\} \leq K'$.
\end{claim}
First, we show how this claim implies the theorem. Suppose the claim is true and let $C_0,\ldots, C_{K'}$ be asymptotic classes for $(X,T)$. For each $j\in \{0,\ldots,K'\}$ we choose $(z_j,z'_j) \in C_j$ such that $z_j$ and $z'_j$ do not belong to the same orbit. Then, there exist $m_j,m'_j \in \Z$ such that $x_j \coloneqq T^{m_j}z_j$ and $y_j \coloneqq T^{m'_j}z'_j$ satisfy
\begin{equation}\label{eq:theo:s_adic_finite_asymptotic:0}
    (x_j)_{(-\infty,0)} = (y_j)_{(-\infty,0)}
    \quad\text{and}\quad (x_j)_0 \not= (y_j)_0,\quad \forall j\in \{0,\ldots,K'\}.
\end{equation}
Thus, $(x_j,y_j) \in \mathcal{P}$ for all $j \in \{0,\ldots,K'\}$ and, by the claim and the Pigeonhole Principle, there exist different $j,j' \in \{0,\ldots,K'\}$ such that $(x_j)_{(-\infty,0)} = (x_{j'})_{(-\infty,0)}$. This implies $C_j = C_{j'}$ and, thus, that $(X,T)$ has at most $K'$ asymptotic classes.

Now we prove the claim. Let $\boldsymbol{\tau} = (\tau_n\colon \cA_{n+1}^+\to \cA_n^+)_{n\geq0}$ be an everywhere growing directive sequence of alphabet rank $K$ generating $X$. By doing a contraction, if required, we can suppose that $\# \cA_n = K$ for every $n\geq 1$. For $n\geq 1$ put $\cW_n = \tau_{[0,n)}(\cA_n)$ and let $\cB_n \subseteq \cA_0^+$ be the set given by Theorem \ref{prop:main_technical} when it is applied to $\cW_n$. By hypothesis, $\# \cW_n \leq \#\cA_n = K$, so $\# \cB_n \leq 122(\#\cW_n)^7 \leq 122K^7=K'$.

For $j\in \{0,\ldots,K'\}$ let $(x_j,y_j) \in \mathcal{P}$. We have to show that $(x_j)_{(-\infty,0)} = (x_{j'})_{(-\infty,0)}$ for different $j,j' \in \{0,\ldots,K'\}$. Since for all $n\geq 1$ and $j\in\{0,\ldots,K'\}$ the points $x_j$ and $y_j$ are factorizable over $\cW_n$ (Lemma \ref{lem:desubstitution}), from Theorem \ref{prop:main_technical} we have that $(x_{j})_{[-\langle  \cW_n\rangle , 0)} \in \cB_n$. But $\#\cB_n\leq K'$ so by the Pigeonhole Principle there exist $j_n,j_n' \in \{0,\ldots,K'\}$ with $j_n\not=j'_n$ such that
\begin{equation}\label{eq:theo:s_adic_finite_asymptotic:1}
(x_{j_n})_{[-\langle  \cW_n\rangle , 0)} = (x_{j'_n})_{[-\langle  \cW_n\rangle , 0)}.
\end{equation}
Thus, again by the Pigeonhole Principle,  
we can choose $1\leq n_1 < n_2 < \dots$ such that $j_{n_1} = j_{n_2} = \dots = j \not= j' = j'_{n_1} = j'_{n_2} = \dots$ By (\ref{eq:theo:s_adic_finite_asymptotic:1}),
\begin{equation}\label{eq:theo:s_adic_finite_asymptotic:2}
(x_j)_{[-\langle  \cW_{n_i}\rangle , 0)} = (x_{j'})_{[-\langle  \cW_{n_i}\rangle , 0)},
\qquad \forall i\geq 1.
\end{equation}
Since $\boldsymbol{\tau}$ is everywhere growing, $\langle  \cW_n\rangle$ goes to infinity when $n\to +\infty$. Thus, (\ref{eq:theo:s_adic_finite_asymptotic:2}) implies that $(x_j)_{(-\infty,0)} = (x_{j'})_{(-\infty,0)}$, as desired. This completes the proof.
\end{proof}

We remark again that in previous result we do not assume minimality. This hypothesis is needed in the next proof (of Theorem \ref{theo:main}) only because we bound the size of the automorphism group by the number of asymptotic classes via Proposition \ref{prop:action_autos_asym}. Thus, Theorem \ref{theo:main} is mainly a consequence of combinatorial facts inherent to $\cS$-adic subshifts.

\begin{ctheo}[\ref{theo:main}]
Let $(X,T)$ be a minimal $\cS$-adic subshift given by an everywhere growing sequence of finite alphabet rank $K$. Then, its automorphism group is virtually $\Z$.
\end{ctheo}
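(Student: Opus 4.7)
The plan is to deduce Theorem \ref{theo:main} by combining Proposition \ref{prop:action_autos_asym}, which bounds $\#\mathrm{Aut}(X,T)/\langle T\rangle$ by the factorial of the number of asymptotic classes (under hypotheses satisfied by any infinite minimal subshift), with Theorem \ref{theo:s_adic_finite_asymptotic}, which bounds the number of asymptotic classes by $122K^7$.

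First I would dispose of the case in which $X$ is finite: minimality then forces $X$ to be a single periodic orbit, any automorphism is determined by its value at one point and must commute with $T$, so $\mathrm{Aut}(X,T) = \langle T\rangle$ is trivially virtually $\Z$.

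Assume henceforth that $X$ is infinite. By minimality, $\omega(x_0) = X$ for every $x_0 \in X$, which supplies the first hypothesis of Proposition \ref{prop:action_autos_asym}. For the second, I would verify the classical fact that any infinite minimal subshift admits an asymptotic pair. Indeed, the word complexity $p_X$ is strictly increasing: otherwise some length-$n$ word would have a unique right-extension, forcing $X$ to be eventually periodic, hence periodic by minimality, contradicting infiniteness. Thus for every $n$ there exist a length-$n$ word $w_n$ and distinct letters $a_n \neq b_n$ with $w_n a_n, w_n b_n$ both appearing in $X$. Choosing $x^{(n)}, y^{(n)} \in X$ with $x^{(n)}_{[-n,0]} = w_n a_n$ and $y^{(n)}_{[-n,0]} = w_n b_n$ and extracting a convergent subsequence (using compactness of $X$ and finiteness of $\cA$) yields limits $x, y \in X$ with $x_{(-\infty,-1]} = y_{(-\infty,-1]}$ and $x_0 \neq y_0$; these are distinct asymptotic points, so in particular $x$ is asymptotic to a different point while satisfying $\omega(x) = X$.

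With both hypotheses of Proposition \ref{prop:action_autos_asym} verified, we obtain $\#\mathrm{Aut}(X,T)/\langle T\rangle \leq \#\mathrm{Asym}(X,T)!$. Applying Theorem \ref{theo:s_adic_finite_asymptotic} (which does not require minimality) gives $\#\mathrm{Asym}(X,T) \leq 122K^7$, so $\mathrm{Aut}(X,T)/\langle T\rangle$ is finite and $\mathrm{Aut}(X,T)$ is virtually $\Z$. There is no substantial obstacle beyond the two main combinatorial inputs already in place; the only minor subtlety is the existence of an asymptotic pair, handled by the standard compactness argument above.
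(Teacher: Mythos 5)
Your proof is correct and follows essentially the same route as the paper: combine Proposition \ref{prop:action_autos_asym} with Theorem \ref{theo:s_adic_finite_asymptotic} to conclude $\#\mathrm{Aut}(X,T)/\langle T\rangle \leq (122K^7)! < \infty$. The paper simply remarks that the hypotheses of Proposition \ref{prop:action_autos_asym} hold for any infinite minimal subshift, whereas you spell out the verification (the finite/periodic case, $\omega(x_0)=X$ by minimality, and the compactness construction of an asymptotic pair from strictly increasing complexity), which is standard and adds useful detail but is not a different argument.
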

\begin{proof}
From Proposition \ref{prop:action_autos_asym} and Theorem \ref{theo:s_adic_finite_asymptotic} we get
\begin{equation*}
\#\mathrm{Aut}(X,T)/\langle T\rangle \leq 
\#\mathrm{Asym}(X,T)! \leq 
\left(122K^7\right)! < +\infty.
\end{equation*}
This inequality proves that $\mathrm{Aut}(X,T)$ is virtually $\Z$.
\end{proof}

\section{Proof of Proposition \ref{lem:bound_on_irreducible}}
\label{sec:main_proof}

In this last section we prove Proposition \ref{lem:bound_on_irreducible}. All but one result  we need (Lemma \ref{lem:extract_di}) are presented and proved here, so the section is almost self contained. 

We fix, for the rest of this section, a finite set of words $\cW \subseteq \cA^+$ and a sequence $U = (\mathsf{u}_M,\mathsf{u}_R,\mathsf{u}'_L,\mathsf{u}'_M,\mathsf{u}'_R,\ell) \in \cW^5\times \N$. For $D \in \cD_U$, we define:
\begin{align*}
\mathsf{\tilde{d}}&\coloneqq 
\mathsf{d}_R (\mathsf{d}'_M\mathsf{d}'_R)^{-1} = 
(\mathsf{d}_L\mathsf{d}_M)^{-1} \mathsf{d}'_L.
\end{align*}


We need a last definition: two words $u,v \in \cA^*$ are \textit{prefix dependent} (resp. suffix dependent) if $u \leq_p v$ or $v \leq_p u$ (resp. $u \leq_s v$ or $v \leq_s u$). In this case, $u$ and $v$ share a common prefix (resp. suffix) of length $\min(|u|,|v|)$.

\begin{lem}\label{lem:redux_if}
Consider different elements $D,E$ in $\cD_U$. If any of the following conditions holds, then the set $\{D,E\}$ is reducible:
\begin{enumerate}[label=(\roman*)]
\item \label{lem:redux_if:prefix}
$\mathsf{d}'_M \mathsf{d}'_R a_D$, $\mathsf{e}'_M \mathsf{e}'_R a_E$ are prefix dependent;

\item\label{lem:redux_if:alpha_prime}
$|\mathsf{d}_R| = |\mathsf{e}_R|$;

\item\label{lem:redux_if:alpha_bar}
$|\mathsf{\tilde{d}}| \leq |\mathsf{\tilde{e}}| \leq |\mathsf{\tilde{d}}\mathsf{d}'_M|$ or $|\mathsf{\tilde{e}}| \leq |\mathsf{\tilde{d}}| \leq |\mathsf{\tilde{e}}\mathsf{e}'_M|$.
\end{enumerate}
\end{lem}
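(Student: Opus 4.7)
My overall strategy is the same in each of the three cases. Membership of $D,E$ in the same set $\cD_U$ forces the two double interpretations to share a lot of rigid combinatorial structure (the same $\mathsf{u}_R$-envelope for the right letters, the same $\mathsf{u}'_L$-envelope for the left-most suffix piece, the same first $\cW$-block $\mathsf{u}'_M$ and the same maximal $\cW$-block length $\ell$ inside $\mathsf{d}'_M$, $\mathsf{e}'_M$), so any additional hypothesis matching some piece of $D$ with the corresponding piece of $E$ pins the two factorizations of $d$ and $e$ tightly together. From that alignment I will either read off a common suffix of $d$ and $e$ of length $\geq\langle\cW\rangle$ (so that $D\sim E$ and $\{D,E\}$ is reducible since $D\neq E$) or locate a strict suffix of $d$ (or of $e$) carrying a $\cW$-double interpretation, whose simplification via Lemma \ref{lem:extract_di} yields the needed reduction.

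Case \ref{lem:redux_if:alpha_prime} is the most direct. Since $\mathsf{d}_Ra_D$ and $\mathsf{e}_Ra_E$ are prefixes of $\mathsf{u}_R$ of the same length $|\mathsf{d}_R|+1$, they are equal, giving $\mathsf{d}_R=\mathsf{e}_R$ (and $a_D=a_E$). To extend this common right-hand piece to the left I use Definition \ref{def:DUset}(1): if $\mathsf{d}_M\neq 1$ then $\mathsf{u}_M\leq_s\mathsf{d}_M$, so $d$ ends in $\mathsf{u}_M\mathsf{d}_R$ of length $\geq|\mathsf{u}_M|\geq\langle\cW\rangle$; if $\mathsf{d}_M=1$ then $d=\mathsf{d}_L\mathsf{d}_R\leq_s\mathsf{u}_M\mathsf{d}_R$, and $|d|\geq\langle\cW\rangle$ by Remark \ref{rem:di_basic_props}. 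The same dichotomy applies to $e$, so in every combination $d$ and $e$ are both suffixes of a common word and one contains the other as a suffix of length $\geq\langle\cW\rangle$, proving $D\sim E$.

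For case \ref{lem:redux_if:prefix}, by the $D\leftrightarrow E$ symmetry I may assume $\mathsf{d}'_M\mathsf{d}'_Ra_D\leq_p\mathsf{e}'_M\mathsf{e}'_Ra_E$. Combined with the suffix-dependence of $\mathsf{d}'_L,\mathsf{e}'_L$ (both are suffixes of $\mathsf{u}'_L$, Definition \ref{def:DUset}(3)) this determines the position of the $I'_D$-cut of $d$ inside the right tail of $e$. Using the $\cW^+$-block structure of $\mathsf{e}'_M$ (Definition \ref{def:DUset}(4)) I slice $e$ at a $\cW$-boundary matching that position, producing a common suffix $f$ of $d$ and $e$ that inherits a (not necessarily simple) d.i.\ assembled from the relevant pieces of $I_D,I'_D,I_E,I'_E$. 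If $f=d$ then the common suffix already has length $|d|\geq\langle\cW\rangle$, giving equivalence; otherwise $f$ is a strict suffix of the longer of $d,e$ satisfying the hypotheses of Lemma \ref{lem:extract_di}, which then delivers a reducing simple d.i.

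Case \ref{lem:redux_if:alpha_bar}, which I expect to be the main obstacle, uses the same philosophy with the offset $\mathsf{\tilde d}$ playing the role of the alignment datum. The inequality $|\mathsf{\tilde{d}}|\leq|\mathsf{\tilde{e}}|\leq|\mathsf{\tilde{d}}\mathsf{d}'_M|$ says precisely that the $I'_E$-cut of $e$ falls inside the single $\cW$-block of $\mathsf{d}'_M$ that immediately follows $\mathsf{\tilde d}$ (and symmetrically for the swapped inequality). Slicing $\mathsf{d}'_M$ at the matching $\cW$-boundary and re-reading interpretations yields a d.i.\ on a proper suffix, which Lemma \ref{lem:extract_di} converts to a simple one. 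The bookkeeping is heavy: one must treat the corner cases $\mathsf{d}'_M=1$ and $\mathsf{e}'_M=1$ (invoking the alternative clauses of Definitions \ref{def:DUset}(1) and \ref{def:DUset}(4)), verify that the produced d.i.\ has its left-most word in $\cW$, and check $|\mathsf{d}_L|\leq|\mathsf{d}'_L\mathsf{d}'_M|$ so that Lemma \ref{lem:extract_di} applies; pinning down these conditions across the handful of subcases is where the genuine difficulty lies.
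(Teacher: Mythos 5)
Your overall strategy coincides with the paper's: in each case one either exhibits a common suffix of $d$ and $e$ of length at least $\langle\cW\rangle$ (so $D\sim E$ and $\{D,E\}$ is reducible), or assembles a double interpretation on a proper suffix and feeds it to Lemma \ref{lem:extract_di}. Your treatment of case \ref{lem:redux_if:alpha_prime} is complete and essentially identical to the paper's. For cases \ref{lem:redux_if:prefix} and \ref{lem:redux_if:alpha_bar}, however, what you have written is a plan rather than a proof, and the steps you defer are precisely where the content of the lemma lives.

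Concretely, in case \ref{lem:redux_if:prefix} with $\mathsf{d}'_M\mathsf{d}'_Ra_D<_p\mathsf{e}'_M\mathsf{e}'_Ra_E$ strict, the word to work with is $w=\mathsf{d}'_M\mathsf{d}'_R$, which is a strict suffix of $d$ but only a \emph{prefix} of the suffix $\mathsf{e}'_M\mathsf{e}'_R$ of $e$; it is not in general a common suffix of $d$ and $e$ as you assert, and your dichotomy ``if $f=d$ then equivalence, else reduce'' does not match the correct case split (equality versus strictness of the prefix relation, where the equality case is the one yielding $D\sim E$ via suffix dependence through $\mathsf{u}'_L$). Two further verifications are indispensable and absent: first, one must rule out $\ell=0$, since otherwise $\mathsf{d}'_M=\mathsf{e}'_M=1$ and $\mathsf{d}'_Ra_D\leq_p\mathsf{e}'_R\leq_p\mathsf{u}'_R$ together with $\mathsf{d}'_Ra'_D\leq_p\mathsf{u}'_R$ would force $a_D=a'_D$; only after this do $\mathsf{d}'_M$ and $\mathsf{e}'_M$ both begin with $\mathsf{u}'_M\in\cW$, which is what makes Lemma \ref{lem:extract_di} applicable. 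Second, one must check that the two interpretations of $w$ (one read off $I'_D$, one inherited from $I'_E$ via Lemma \ref{lem:inherit}) carry \emph{different} distinguished letters --- this is the whole point of producing a d.i., and it requires identifying the inherited letter with $a_D$ through the prefix relations into $v_E\mathsf{e}'_R$ before contrasting it with $a'_D$. In case \ref{lem:redux_if:alpha_bar} the analogous verifications are also missing: the case must first be reduced to $|\mathsf{\tilde{d}}|<|\mathsf{\tilde{e}}|$ and $|\mathsf{d}_R|\neq|\mathsf{e}_R|$ by falling back on (i) and (ii); and in the subcase where $t\mathsf{e}'_M\mathsf{e}'_R$ is a proper prefix of $\mathsf{d}'_M\mathsf{d}'_R$, the length hypothesis $|t|\leq|\mathsf{u}'_Mv_1\cdots v_{m-1}|$ of Lemma \ref{lem:extract_di} is established only by a contradiction argument exploiting the maximality of $\ell$ among the $\cW$-block lengths of $\mathsf{d}'_M$ and $\mathsf{e}'_M$ (one derives $\ell<|v'_m|\leq\ell$). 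Since you explicitly flag these checks as ``where the genuine difficulty lies'' without supplying them, the proposal identifies the right route but does not yet establish the lemma.
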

\begin{proof}
We will show that under conditions of the lemma one of the following relations occurs: $D\sim E$, $E$ reduces to a simple d.i. or $D$ reduces to a simple d.i.
\begin{enumerate}[leftmargin=0cm, label=\textbf{(\roman*)}]
\item Without loss of generality, we can suppose that $\mathsf{d}'_M \mathsf{d}'_R a_D \leq_p \mathsf{e}'_M \mathsf{e}'_R a_E$. We distinguish two cases:
\begin{enumerate}[wide, labelwidth=!, labelindent=0pt]

\item $\mathsf{d}'_M \mathsf{d}'_R a_D = \mathsf{e}'_M \mathsf{e}'_R a_E$. Using item \eqref{eq:def_S_xyzu:3} of Definition \ref{def:DUset} we can write $d = \mathsf{d}'_L\mathsf{d}'_M\mathsf{d}'_R \leq_s \mathsf{u}'_L\mathsf{d}'_M\mathsf{d}'_R$. Similarly, $e \leq_s \mathsf{u}'_L\mathsf{e}'_M\mathsf{e}'_R$. This and hypothesis (a) imply that $d$ and $e$ are suffix dependent. But, since $D$ and $E$ are simple d.i., by Remark \ref{rem:di_basic_props} we have that $|d|,|e| \geq \langle\cW\rangle$. We conclude that $d$ and $e$ share a suffix of length at least $\min(|d|,|e|) \geq \langle \cW\rangle$, which implies $D\sim E$.

\item $\mathsf{d}'_M \mathsf{d}'_R a_D <_p \mathsf{e}'_M \mathsf{e}'_R a_E$ (so, $\mathsf{d}'_M \mathsf{d}'_Ra_D \leq_p \mathsf{e}'_M \mathsf{e}'_R$).  We claim that $\ell > 0$ in the definition of $U$. Suppose that $\ell = 0$. Then, $\mathsf{d}'_M = \mathsf{e}'_M = 1$ and we can write:
\begin{align*}
\mathsf{d}'_R a_D \leq_p \mathsf{e}'_R \leq_p \mathsf{u}'_R.
\end{align*}
Since by \eqref{eq:def_S_xyzu:3} of Definition \ref{def:DUset} we also have $\mathsf{d}'_R a'_D \leq_p \mathsf{u}'_R$, we conclude that $a_D = a'_D$. This contradicts the fact that $E$ is a d.i. Thus, $\ell > 0$.

Now, $\ell > 0$ and \eqref{eq:def_S_xyzu:4} of Definition \ref{def:DUset} imply that $v_D \coloneqq (\mathsf{u}'_M)^{-1}\mathsf{d}'_M \in \cW^*$ and $v_E \coloneqq (\mathsf{u}'_M)^{-1}\mathsf{e}'_M \in \cW^*$. Let $w \coloneqq \mathsf{d}'_M \mathsf{d}'_R$. Observe that $J_D = \mathsf{u}'_M, v_D, \mathsf{d}'_R, a'_D$ is an interpretation of $w$. Moreover, since $\mathsf{u}'_M \leq_p w <_p \mathsf{u}'_M v_E \mathsf{e}'_R$ by hypothesis (b) and $v_E \in \cW^*$, we can obtain, using Lemma \ref{lem:inherit}, an interpretation of $w$ of the form $J_E = \mathsf{u}'_M, \mathsf{e}''_M, \mathsf{e}''_R, a''_E$ such that $w a''_E \leq_p \mathsf{u}'_M v_E\mathsf{e}'_R$.

Next, we prove that $F \coloneqq (J_D; J_E)$ is a d.i. of $w$. Observe that  $v_D\mathsf{d}'_Ra_D \leq_p v_E\mathsf{e}'_R$ by hypothesis (b) and $\mathsf{e}''_M\mathsf{e}''_Ra''_E \leq_p v_E\mathsf{e}'_R$ by the definition of $J_E$. But $v_D\mathsf{d}'_R = (\mathsf{u}'_R)^{-1}w = \mathsf{e}''_M\mathsf{e}''_R$, so $a_D = a''_E$. Hence, $a'_D \not= a_D = a''_E$ and $F$ is a d.i. of $w$.

Finally, we note that since $J_D$ and $J_E$ start with $\mathsf{u}'_M \in \cW$, we can use Lemma \ref{lem:extract_di} with $F$ to obtain a simple d.i. $G$ of a word $g$ such that $g\leq_s w <_s d$. This corresponds to the fact that $D$ reduces to $G$.

\end{enumerate}

\item Assume $|\mathsf{d}_R| = |\mathsf{e}_R|$. Since, by \eqref{eq:def_S_xyzu:2} of Definition \ref{def:DUset}, we have that $\mathsf{d}_R$ and $\mathsf{e}_R$ are prefix of $\mathsf{u}_R$, hypothesis (ii) implies that $\mathsf{d}_R = \mathsf{e}_R$. In addition, from \eqref{eq:def_S_xyzu:1} of Definition \ref{def:DUset} we see that $\mathsf{d}_L\mathsf{d}_M$ and $\mathsf{e}_L\mathsf{e}_M$ either share the suffix $\mathsf{u}_M\in \cW$ or are suffix dependent. We conclude that $d = \mathsf{d}_L\mathsf{d}_M\mathsf{d}_R$ and $e = \mathsf{e}_L\mathsf{e}_M\mathsf{e}_R$ share a suffix of length at least $\langle\cW\rangle$. This is, $D \sim E$.

\item We consider the case $|\mathsf{\tilde{d}}| \leq |\mathsf{\tilde{e}}| \leq |\mathsf{\tilde{d}}\mathsf{d}'_M|$, the other one is symmetric. 

We start with some simplifications. Observe that condition \eqref{eq:def_S_xyzu:2} in Definition \ref{def:DUset} implies
\begin{equation}\label{eq:lem:redux_if:1.5}
\mathsf{d}_Ra_D =
\mathsf{\tilde{d}}\mathsf{d}'_M \mathsf{d}'_R a_D \leq_p \mathsf{u}_R
\quad\text{and}\quad
\mathsf{e}_R a_E =
\mathsf{\tilde{e}}\mathsf{e}'_M \mathsf{e}'_R a_E \leq_p \mathsf{u}_R.
\end{equation}
Then, if $|\mathsf{\tilde{d}}| = |\mathsf{\tilde{e}}|$, we are in case \ref{lem:redux_if:prefix}, and if $|\mathsf{d}_R| = |\mathsf{e}_R|$, we are in case \ref{lem:redux_if:alpha_prime}. Thus, we can suppose, without loss of generality, that
\begin{align}
|\mathsf{\tilde{d}}| &< |\mathsf{\tilde{e}}|, \label{eq:lem:redux_if:2}\\
|\mathsf{d}_R| &\not= |\mathsf{e}_R|. \label{eq:lem:redux_if:3}
\end{align}

The idea of the proof is the following. We are going to define a word $w$, which is suffix of $d$ or $e$, and that has a d.i. $F$ satisfying the hypothesis of Lemma \ref{lem:extract_di}. This would imply that $F$ (and then also $D$ or $E$) reduces to a simple d.i., as desired.

From \eqref{eq:lem:redux_if:2} and hypothesis (iii) we have that $|\mathsf{\tilde{d}}| \not = |\mathsf{\tilde{d}}\mathsf{d}'_M|$ and thus $\ell\not =0$. In particular, this last fact implies that $v_D \coloneqq (\mathsf{u}'_M)^{-1}\mathsf{d}'_M \in \cW^*$ and  $v_E \coloneqq (\mathsf{u}'_M)^{-1}\mathsf{e}'_M \in \cW^*$. Also, from \eqref{eq:lem:redux_if:1.5} and \eqref{eq:lem:redux_if:2} we see that it makes sense to define $t \coloneqq \mathsf{\tilde{d}}^{-1}\mathsf{\tilde{e}} \not= 1$. Then, $J_D = \mathsf{u}'_M, v_D, \mathsf{d}'_R, a'_D$ is an interpretation of $\mathsf{d}'_M \mathsf{d}'_R$ and $J_E = t, \mathsf{e}'_M, \mathsf{e}'_R, a'_E$ is an interpretation of $t \mathsf{e}'_M \mathsf{e}'_R$. Now, using \eqref{eq:lem:redux_if:1.5} and \eqref{eq:lem:redux_if:3} we also obtain that either $\mathsf{d}'_M \mathsf{d}'_R <_p t \mathsf{e}'_M \mathsf{e}'_R$ or $t\mathsf{e}'_M \mathsf{e}'_R <_p \mathsf{d}'_M \mathsf{d}'_R$. We analyze these two cases separately:

\medskip
\begin{enumerate}[wide, labelwidth=!, labelindent=0pt]
\item Assume $\mathsf{d}'_M \mathsf{d}'_R <_p t \mathsf{e}'_M \mathsf{e}'_R$. We define $w = \mathsf{d}'_M \mathsf{d}'_R <_s d$. Note that $J_D$ is an interpretation of $w$. By hypothesis (iii), we have $t \leq_p w <_p t \mathsf{e}'_M\mathsf{e}'_R$, so we can use Lemma \ref{lem:inherit} with $J_E$ to obtain an interpretation of $w$ having the form $J'_E = t, \mathsf{e}''_M, \mathsf{e}''_R, a$ and satisfying $wa \leq_p \mathsf{e}'_M\mathsf{e}'_R$. We set $F = (J_D, J'_E)$. Since $wa \leq_p t\mathsf{e}'_M\mathsf{e}'_R = \mathsf{\tilde{d}}^{-1}\mathsf{e}_R \leq_p \mathsf{\tilde{d}}^{-1}\mathsf{u}_R$ and $wa_D = \mathsf{d}'_M\mathsf{d}'_Ra_D = \mathsf{\tilde{d}}^{-1}\mathsf{d}_Ra_D \leq_p \mathsf{\tilde{d}}^{-1}\mathsf{u}_R$, we have $a = a_D$. Being $a_D \not= a'_D$ as $D$ is a d.i., we conclude that $a \not= a'_D$ and that $F$ is a d.i. Recall that $\mathsf{u}'_R \in \cW$ and observe that $|t| \leq |\mathsf{d}'_M|$ by hypothesis (iii). Thus, $F$ satisfies the hypothesis of Lemma \ref{lem:extract_di}. This implies that $D$ is reducible.

\medskip
\item Suppose $t \mathsf{e}'_M \mathsf{e}'_R <_p \mathsf{d}'_M \mathsf{d}'_R$. Observe that from \eqref{eq:def_S_xyzu:4} of Definition \ref{def:DUset} we know that there exist $n\geq0$ and, for $j\in\{1,\dots,n\}$, $v_j \in \cW$ with $|v_j|\leq\ell$, such that $v_D = v_1\cdots v_n$ (we interpret $v_1\cdots v_n = 1$ when $n=0$). We define $w = t \mathsf{e}'_M \mathsf{e}'_R <_s e$ and $v_{n+1} = \mathsf{d}'_R$. Since $|w| \geq |\mathsf{u}'_R|$, we have $\mathsf{u}'_M \leq_p w <_p \mathsf{u}'_M v_1\cdots v_{n+1}$ by (b), and thus, there exists a least integer $m \in \{1,\dots,n+1\}$ such that $w \leq_p \mathsf{u}'_M v_1\cdots v_m$. Being $m$ minimal, we can write $w = \mathsf{u}'_M v_1\cdots v_{m-1} v'_m$, with $v'_m \leq_p v_m$ and $wa \leq_p \mathsf{d}'_M\mathsf{d}'_R$ for some $a\in\cA$. Then, $J'_D \coloneqq \mathsf{u}'_M, v_1\cdots v_{m-1}, v'_m, a$ and $J_E$ are interpretations of $w$. 

We set $F = (J'_D, J_E)$ and claim that $F$ is a d.i. Indeed, on the one hand, the definition of $J'_D$ gives $wa \leq_p \mathsf{d}'_M\mathsf{d}'_R \leq_p \mathsf{\tilde{d}}^{-1}\mathsf{u}_R$. On the other hand, since $w = \mathsf{\tilde{d}}^{-1}\mathsf{\tilde{e}}\mathsf{e}'_M\mathsf{e}'_R = \mathsf{\tilde{d}}^{-1}\mathsf{e}_R$, we have $wa_E \leq_p \mathsf{\tilde{d}}^{-1}\mathsf{u}_R$ by \eqref{eq:def_S_xyzu:2} of Definition \ref{def:DUset}. We conclude that $a = a_E$. Then, $a \not= a'_E$ (because $E$ is a d.i.) and $F$ is a d.i.

Finally, we prove that $F$ satisfies the hypothesis of Lemma \ref{lem:extract_di}. Since $J'_D$ starts with $\mathsf{u}'_M \in \cW$, we only need to show that $|t| \leq |\mathsf{u}'_Mv_1\cdots v_{m-1}|$. By contradiction, we assume $\mathsf{u}'_Mv_1\cdots v_{m-1} <_p t$. This condition implies two things. First, that we can define $t' = (\mathsf{u}'_Mv_1\cdots v_{m-1})^{-1}t \not=1$, and then, since $\mathsf{u}'_Mv_1\cdots v_{m-1}v'_m = t\mathsf{e}'_M\mathsf{e}'_R$, that $v'_m = t'\mathsf{e}'_M\mathsf{e}'_R$. In particular, $\ell \leq |\mathsf{e}'_M| < |v'_m|$. The second fact is that $m \leq n$. Indeed, by hypothesis (iii) we have $|\mathsf{u}'_Mv_1\cdots v_{m-1}| < |t| \leq |\mathsf{d}'_M| = |\mathsf{u}'_M v_1\cdots v_n|$. Hence, $\ell < |v'_m| \leq |v_m| \leq \ell$, which is a contradiction. This proves that Lemma \ref{lem:extract_di} can be applied with $F$, so $F$ (and then also $E$) reduces to a simple d.i.
\end{enumerate}

\end{enumerate}
\end{proof}

If $u \in \cA^+$, then we write $u^\infty \coloneqq uuu\cdots$ and $\prescript{\infty}{}{u} \coloneqq \cdots uuu$. Recall that an  integer $k\geq1$ is a period of $w \in \cA^+$ if $w \leq_p u^\infty$ (equivalently, $w \leq_s \prescript{\infty}{}{u}$) for some $u \in \cA^k$. The following result (also known as the Fine and Wilf Lemma) is classical.
\begin{lem}[Proposition 1.3.2, \cite{Lothaire}]\label{lem:fine_wilf}
If $p,p' \geq 1$ are periods of $w \in \cA^+$ and $p+p' \leq |w|$, then $\gcd(p,p')$ is also a period of $w$.
\end{lem}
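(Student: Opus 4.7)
My plan is to prove this by the standard Euclidean-algorithm descent: reduce the general case to showing that if $p > p'$ are both periods of $w$ and $p + p' \leq |w|$, then $p - p'$ is also a period of $w$. Once this key step is established, an induction on $p + p'$ finishes the job, since $(p - p') + p' = p \leq |w|$ allows the hypothesis to be re-applied to the pair $(p - p', p')$, and iterating mimics the Euclidean algorithm, ending at $\gcd(p,p')$.

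For the key step, I would write $w = w_0 w_1 \cdots w_{n-1}$ with $n = |w|$ and, for $0 \leq i < n - (p - p')$, prove $w_i = w_{i + p - p'}$ by a case split on whether $i + p \leq n - 1$ or $i \geq n - p$. In the first case I apply the period $p$ to jump right by $p$ and then the period $p'$ to come back by $p'$, concluding $w_i = w_{i+p} = w_{i+p-p'}$. In the second case, the hypothesis $p + p' \leq n$ guarantees $i - p' \geq n - p - p' \geq 0$, so I can first step left by $p'$ and then right by $p$, which is legitimate because $i + p - p' < n$. Either way, all intermediate indices land in $[0, n-1]$, which is exactly where the hypotheses $p + p' \leq |w|$ and $i < n - (p - p')$ are used.

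The base case of the induction is when $p = p'$, where $\gcd(p, p') = p$ is tautologically a period. The inductive step applies the key lemma to get that $p - p'$ is a period (assuming WLOG $p \geq p'$); since $(p - p') + p' = p \leq p + p' \leq |w|$, the inductive hypothesis applies to the pair $(p - p', p')$ with strictly smaller sum, yielding that $\gcd(p - p', p') = \gcd(p, p')$ is a period of $w$.

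The only subtle point — and the main place to be careful — is the bookkeeping in the case split: one must verify that the $p+p' \leq |w|$ hypothesis is exactly what is needed to ensure that in the ``boundary'' case $i \geq n - p$ the auxiliary index $i - p'$ remains non-negative, and that the other auxiliary index $i + p - p'$ stays below $n$. This is where the Fine–Wilf bound is tight: weakening it to $p + p' \leq |w| + 1$ would allow counterexamples, so the argument must really use the full strength of $p + p' \leq |w|$.
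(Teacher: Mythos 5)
Your proof is correct. The paper does not prove this lemma at all — it simply cites it as Proposition 1.3.2 of Lothaire — and your Euclidean (subtractive) descent is essentially the standard argument given there: the two-case index chase establishing that $p-p'$ is a period is exactly where the hypothesis $p+p'\leq|w|$ is consumed, and the induction on $p+p'$ is well-founded since the new pair $(p-p',p')$ has sum $p<p+p'$ and still satisfies $(p-p')+p'=p\leq|w|$.

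One small inaccuracy in your closing remark: the hypothesis $p+p'\leq|w|$ is not tight for the conclusion as stated. The sharp Fine--Wilf theorem requires only $|w|\geq p+p'-\gcd(p,p')$, so in particular $p+p'\leq|w|+1$ would still suffice (counterexamples exist only at length $p+p'-\gcd(p,p')-1$). This does not affect the validity of your proof, which only claims the lemma under the stated, stronger hypothesis; it is merely a statement about where your particular bookkeeping breaks down, not about where the theorem itself fails.
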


We fix an irreducible subset $\cD' \subseteq \cD_U$. For $D,E \in \cD'$, since $\mathsf{\tilde{d}}, \mathsf{\tilde{e}} \leq_p \mathsf{u}_R$ and $\mathsf{\tilde{d}}, \mathsf{\tilde{e}} \leq_s \mathsf{u}'_L$, we have that $\mathsf{\tilde{d}}$ and $\mathsf{\tilde{e}}$ are  
both prefix and suffix dependent. So it makes sense to define in $\cD'$: 
\begin{equation*}
D \leq E
\quad \text{iff} \quad
\mathsf{\tilde{d}} \leq_p \mathsf{\tilde{e}}.
\end{equation*}
Observe that Lemma \ref{lem:redux_if} part \ref{lem:redux_if:alpha_bar} implies that $D = E$ if and only if $\mathsf{\tilde{d}} =\mathsf{\tilde{e}}$. Therefore, $\leq$ is a total order. In particular, we can use the notation $D < E$ when $D\leq E$ and $D\not= E$. In this case it is not difficult to prove that $|\mathsf{\tilde{e}}|-|\mathsf{\tilde{d}}|$ is a period of $\mathsf{\tilde{e}}$.

Let $D(1) < \dots < D(s)$ be all the elements in $\cD'$ (deployed in increasing order). We adopt the mnemotechnical notation: 
\begin{align}\label{eq:mnemotechnical}
&D(j) = (\mathsf{d}_L(j), \mathsf{d}_M(j), \mathsf{d}_R(j), a(j);\ \mathsf{d}'_L(j), \mathsf{d}'_M(j), \mathsf{d}'_R(j), a'(j)); \\
& d(j) = \mathsf{d}_L(j) \mathsf{d}_M(j) \mathsf{d}_R(j), \
\mathsf{\tilde{d}}(j) = (\mathsf{d}_L(j) \mathsf{d}_M(j))^{-1}\mathsf{d}'_L(j).
\end{align}

For $D, E \in \cD'$, since $\mathsf{d}_Ra_D, \mathsf{\tilde{e}} \leq_p \mathsf{u}_R$, we have that $\mathsf{d}_Ra_D \leq_p \mathsf{\tilde{e}}$ if and only if $|\mathsf{d}_R| < |\mathsf{\tilde{e}}|$. Thus, for $j \in \{1,\dots,s\}$ we can define
\begin{equation*}
\cD' (j) \coloneqq
\{D \in \cD' : \mathsf{d}_Ra_D \leq_p \mathsf{\tilde{d}}(j)\} =
\{D \in \cD' : |\mathsf{d}_R| < |\mathsf{\tilde{d}}(j)|\}
\end{equation*}
and $\cD' (s+1) \coloneqq \cD' $. By definition of the total order, this is a nondecreasing sequence. Moreover,  $\cD'(j) \subseteq \{D(k):k\in \{1,\dots,j-1\}\}$ for all $j \in \{1,\dots,s+1\}$. In particular,  $\cD'(1) = \emptyset$.

\begin{lem}\label{claim:cota_sincronos}
Let $p \in \{1,\dots,s+1\}$ be such that $\cD'(p)$ is nonempty and let $D(p') \coloneqq \max\cD'(p)$, where the maximum is taken with respect to the total order. Then, $\#(\cD'(p)\backslash \cD'(p')) \leq 6$.
\end{lem}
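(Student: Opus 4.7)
The plan is to show that assuming $\#(\cD'(p)\setminus\cD'(p'))\geq 7$ produces a configuration forbidden by the irreducibility of $\cD'$. First I would unpack what membership in $\cD'(p)\setminus\cD'(p')$ means for a given $D$: exactly that $|\mathsf{\tilde{d}}(p')|\leq|\mathsf{d}_R|<|\mathsf{\tilde{d}}(p)|$ together with $\mathsf{d}_Ra_D\leq_p\mathsf{\tilde{d}}(p)$. Since $D\leq D(p')$ in the total order, $\mathsf{\tilde{d}}\leq_p\mathsf{\tilde{d}}(p')$, and using that both are suffixes of $\mathsf{u}'_L$ by \eqref{eq:def_S_xyzu:3}, also $\mathsf{\tilde{d}}\leq_s\mathsf{\tilde{d}}(p')$. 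Enumerate the set in the total order as $D_1<\cdots<D_k$; the maximal element is $D_k=D(p')$, which one checks always belongs to the set because $\mathsf{d}_R(p')=\mathsf{\tilde{d}}(p')\mathsf{d}'_M(p')\mathsf{d}'_R(p')$ has length at least $|\mathsf{\tilde{d}}(p')|$.

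Next I would extract the rigid structural picture for a non-maximal $D_i$ ($i<k$). Because $\{D_i,D(p')\}\subseteq\cD'$ is irreducible, Lemma \ref{lem:redux_if}\ref{lem:redux_if:alpha_bar} forces $|\mathsf{\tilde{d}}(p')|>|\mathsf{\tilde{d}}_i\mathsf{d}'_M(i)|$. Combined with $|\mathsf{d}_R(i)|\geq|\mathsf{\tilde{d}}(p')|$ and $\mathsf{d}_R(i)=\mathsf{\tilde{d}}_i\mathsf{d}'_M(i)\mathsf{d}'_R(i)$, this produces a factorization $\mathsf{\tilde{d}}(p')=\mathsf{\tilde{d}}_i\mathsf{d}'_M(i)t_i$ with $t_i$ a \emph{nonempty} prefix of $\mathsf{d}'_R(i)$. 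Since $\mathsf{d}'_R(i)a'(i)\leq_p\mathsf{u}'_R$ and $\mathsf{u}'_R$ is fixed in $U$, each $\mathsf{d}'_R(i)$ is determined by its length and $t_i$ is itself a prefix of the single word $\mathsf{u}'_R$; thus at the position $\mathrm{pos}_i:=|\mathsf{\tilde{d}}_i\mathsf{d}'_M(i)|$ of $\mathsf{\tilde{d}}(p')$ the suffix that runs to the end is a fixed prefix of $\mathsf{u}'_R$.

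The heart of the argument should then be periodic. Every $\mathsf{\tilde{d}}_i$ is simultaneously a prefix and a suffix of $\mathsf{\tilde{d}}(p')$, so the quantities $q_i:=|\mathsf{\tilde{d}}(p')|-|\mathsf{\tilde{d}}_i|$ form a strictly decreasing sequence of periods of $\mathsf{\tilde{d}}(p')$, with the extra gap control $q_i-q_{i+1}>|\mathsf{d}'_M(i)|$ inherited from the previous step. If $k\geq 7$, then applying Lemma \ref{lem:fine_wilf} to suitable pairs $(q_j,q_{j+1})$ yields short common periods $\gcd(q_j,q_{j+1})$ of $\mathsf{\tilde{d}}(p')$; iterating drives the $\mathsf{\tilde{d}}_i$ into such a tight arithmetic pattern that two of them either produce prefix-dependent words $\mathsf{d}'_M(i)\mathsf{d}'_R(i)a(i)$ (contradicting Lemma \ref{lem:redux_if}\ref{lem:redux_if:prefix}) or produce two indices with coinciding $|\mathsf{d}_R|$ (contradicting \ref{lem:redux_if:alpha_prime}).

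I expect the hardest part to be the bookkeeping that pins the constant at exactly $6$ rather than some larger absolute constant. The subcases $\mathsf{d}'_M(i)=1$, $\mathsf{d}'_M(i)=\mathsf{u}'_M$, and $|\mathsf{d}'_M(i)|$ attaining the maximal length $\ell$ must be handled separately, together with the possible coincidences among the lengths $|t_i|$; in each regime Fine--Wilf allows only a small number of successive periods before a reducing coincidence is triggered, and one has to verify that these local bounds add up to $6$ and cannot be beaten. Carrying out this case analysis cleanly, and identifying the right pair of indices to which to apply Lemma \ref{lem:redux_if}, is the technical heart of the argument.
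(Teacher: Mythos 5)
Your setup is sound and matches the paper's: you correctly identify that every $D\in\cD'(p)\setminus\cD'(p')$ satisfies $\mathsf{\tilde{d}}\leq_p\mathsf{\tilde{d}}(p')\leq_p\mathsf{d}_R\leq_p\mathsf{u}_R$, that irreducibility together with Lemma \ref{lem:redux_if}\ref{lem:redux_if:alpha_bar} forces $\mathsf{\tilde{d}}\mathsf{d}'_M<_p\mathsf{\tilde{e}}$ for $D<E$ in $\cD'(p)$, and that the suffix of $\mathsf{\tilde{d}}(p')$ beginning at position $|\mathsf{\tilde{d}}\mathsf{d}'_M|$ is a prefix of the fixed word $\mathsf{u}'_R$. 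The endgame you describe (two elements violating \ref{lem:redux_if:prefix} or \ref{lem:redux_if:alpha_prime}) is also the right target. But the central periodicity step has a genuine gap. You propose to apply the Fine--Wilf lemma to the periods $q_i=|\mathsf{\tilde{d}}(p')|-|\mathsf{\tilde{d}}_i|$ of $\mathsf{\tilde{d}}(p')$ coming from the prefix-and-suffix property. Lemma \ref{lem:fine_wilf} requires $q_j+q_{j+1}\leq|\mathsf{\tilde{d}}(p')|$, i.e.\ $|\mathsf{\tilde{d}}_j|+|\mathsf{\tilde{d}}_{j+1}|\geq|\mathsf{\tilde{d}}(p')|$, and there is nothing forcing this: all seven words $\mathsf{\tilde{d}}_i$ may be much shorter than half of $\mathsf{\tilde{d}}(p')$, in which case no pair $(q_j,q_{j+1})$ satisfies the hypothesis and your iteration never starts. (That this length condition can genuinely fail is exactly what claim \eqref{eq:regularidad:1} in Lemma \ref{claim:regularidad} is about.)

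The ingredient you recorded but did not use is the one that saves the argument. Setting $v_k=\mathsf{\tilde{d}}(j_k)\mathsf{d}'_M(j_k)$, each suffix $v_k^{-1}\mathsf{\tilde{d}}(j_7)$ equals a prefix of $\mathsf{d}'_R(j_k)\leq_p\mathsf{u}'_R$; since all these suffixes are prefixes of the \emph{same} word $\mathsf{u}'_R$, they are pairwise prefix dependent, and hence $|v_1^{-1}v_k|$ is a period of $w\coloneqq v_1^{-1}\mathsf{\tilde{d}}(j_7)$ --- these are the periods one must feed into Fine--Wilf, not the $q_i$. The second period is $|\mathsf{\tilde{d}}(j_6)^{-1}\mathsf{\tilde{d}}(j_7)|$ (from the prefix/suffix property of $\mathsf{\tilde{d}}(j_6)$ inside $\mathsf{\tilde{d}}(j_7)$), and the Fine--Wilf length hypothesis then holds automatically for $k\leq5$ because $v_k<_p\mathsf{\tilde{d}}(j_6)$; this is precisely why seven elements are taken: five carry the periods and the last two guarantee the length condition. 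After that, the conclusion is not an "iteration" but a single dichotomy: writing $v_1^{-1}v_k=q^{n_k}$, each $\mathsf{d}'_R(j_k)$ is the longest common prefix of $q^{-n_k}v_1^{-1}\mathsf{u}_R$ and $\mathsf{u}'_R$, so it equals either $q^{n-n_k}r$ or $q^{n'}r'$; among four consecutive indices two fall on the same side, giving either equal $|\mathsf{d}_R|$'s (case \ref{lem:redux_if:alpha_prime}) or equivalent d.i.'s. Your sketch does not supply either the correct period family or this mechanism, so as written the proof does not go through.
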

\begin{proof}
We prove the lemma by contradiction. Suppose $\#(\cD' (p)\backslash \cD' (p')) \geq 7$ and let $D(j_1) < D(j_2) < \dots < D(j_7)$ be seven different elements in $\cD'(p)\backslash \cD'(p')$. 

We start by obtaining some relations. First, from part \ref{lem:redux_if:alpha_bar} of Lemma \ref{lem:redux_if} and the irreducibility of $\cD'$, we get
\begin{equation}\label{eq:claim:cota_sincronos:1}
\text{$\mathsf{\tilde{d}} \mathsf{d}'_M <_p \mathsf{\tilde{e}}$ for all $D,E \in \cD' (p)$ such that $D < E$}.
\end{equation}
Thus,
\begin{equation}\label{eq:claim:cota_sincronos:2}
\mathsf{\tilde{d}}(j_k) \leq_p \mathsf{\tilde{d}}(j_k) \mathsf{d}'_M(j_k) <_p
\mathsf{\tilde{d}}(j_{k+1}) \leq_p \mathsf{\tilde{d}}(j_{k+1})\mathsf{d}'_M(j_{k+1})
\text{ for all } k\in\{1,\dots,6\}.
\end{equation}
We set $v_k = \mathsf{\tilde{d}}(j_k)\mathsf{d}'_M(j_k)$, $k \in \{1,\dots,6\}$. By \eqref{eq:claim:cota_sincronos:2},
\begin{equation*}
v_1 <_p \dots <_p v_5 <_p \mathsf{\tilde{d}}(j_6) <_p v_6 <_p \mathsf{\tilde{d}}(j_7).
\end{equation*}
Also, observe that for any $D \in \cD'(p)\backslash \cD'(p')$ we have $D \leq D(p')$ and $D\not\in\cD'(p')$, which gives
\begin{equation}\label{eq:claim:cota_sincronos:3}
\mathsf{\tilde{d}} \leq_p \mathsf{\tilde{d}}(p') \leq_p \mathsf{d}_R \leq_p \mathsf{u}_R.
\end{equation}

Equation \eqref{eq:claim:cota_sincronos:2}, the first inequality of \eqref{eq:claim:cota_sincronos:3} used with $\mathsf{d}(j_7)$ and the second inequality of \eqref{eq:claim:cota_sincronos:3} used with $\mathsf{\tilde{d}}(j_k)$ imply that
\begin{equation}\label{eq:claim:cota_sincronos:4}
v_k <_p \mathsf{\tilde{d}}(j_7) \leq_p 
\mathsf{\tilde{d}}(p') \leq_p
\mathsf{d}_R(j_k) \text{ for all } k\in\{1,\dots,6\}. 
\end{equation}

\medskip
From previous relations we can define the nonempty word $w \coloneqq v_1^{-1} \mathsf{\tilde{d}}(j_7)$. Let $q\leq_p w$ be such that $|q|$ is the least period of $w$. We will prove that $|q|$ divides $|v_1^{-1}v_k|$ for all $k \in \{1,\dots,5\}$. 

On the one hand, the observation made before the proof shows that $|\mathsf{\tilde{d}}(j_6)^{-1}\mathsf{\tilde{d}}(j_7)|$ is a period of $\mathsf{\tilde{d}}(j_7)$, and thus also of $w$. On the other hand, if $k \in \{1,\dots,6\}$, then from \eqref{eq:claim:cota_sincronos:4} and the definition of $\mathsf{\tilde{d}}$ we get 
\begin{equation*}
(v_1^{-1}v_k)^{-1}w = v_k^{-1}\mathsf{\tilde{d}}(j_7) \leq_p v_k^{-1}\mathsf{d}_R(j_k) = \mathsf{d}'_R(j_k) \leq_p \mathsf{u}'_R,
\end{equation*}
being the last step true due to item \eqref{eq:def_S_xyzu:3} of Definition \ref{def:DUset}. In particular, for $k=1$ we get $w \leq_p \mathsf{u}'_R$. These inequalities imply $w \leq_p (v_1^{-1}v_k)^\infty$. Consequently, $|v_1^{-1}v_k|$ is a period of $w$. Since, by \eqref{eq:claim:cota_sincronos:2}, $v_k^{-1}\mathsf{\tilde{d}}(j_6)$ is defined for all $k \in \{1,\dots,5\}$, then for these values of $k$ we can compute 
\begin{equation*}
|q| + |v_1^{-1}v_k| \leq
|\mathsf{\tilde{d}}(j_6)^{-1}\mathsf{\tilde{d}}(j_7)| + |v_1^{-1}v_k| = |w| - |v_k^{-1}\mathsf{\tilde{d}}(j_6)| \leq |w|.
\end{equation*}
Hence, Lemma \ref{lem:fine_wilf} can be applied to get that $\gcd( |q|, |v_1^{-1}v_k|)$ is a period of $w$ for $k \in \{1,\dots,5\}$. In particular, $|q| = \gcd( |q|, |v_1^{-1}v_k|)$ and $|q|$ divides $|v_1^{-1}v_k|$ for $k \in \{1,\dots,5\}$.
\medskip

Then, we have $w \leq_p q^\infty$ and, by the claim, for $k \in \{1,\dots,5\}$ there exists $n_k \geq 0$ satisfying $v_1^{-1}v_k = q^{n_k}$.
Moreover, from the definition of $v_k$ we have $v_k = v_1q^{n_k}$, which implies
\begin{align*}\label{eq:claim:cota_sincronos:5}
\mathsf{d}'_R(j_k) a(j_k) = 
v_k^{-1} \mathsf{d}_R(j_k)a(j_k) \leq_p
v_k^{-1} \mathsf{u}_R =
q^{-n_k} v_1^{-1} \mathsf{u}_R
\end{align*}
and $\mathsf{d}'_R(j_k) a'(j_k) \leq_p \mathsf{u}'_R$. Thus, since $a(j_k) \not= a'(j_k)$, we deduce that $\mathsf{d}'_R(j_k)$ is the maximal common prefix of $q^{-n_k} v_1^{-1}\mathsf{u}_R$ and $\mathsf{u}'_R$. 

Now, let $n,n' \geq 0$ and $r,r' <_p q$ be maximal such that $q^n r \leq_p v_1^{-1}\mathsf{u}_R$ and $q^{n'} r' \leq_p \mathsf{u}'_R$. We conclude that
\begin{equation}\label{eq:claim:cota_sincronos:6}
\text{$\mathsf{d}'_R(j_k) = q^{n-n_k}r$ if $n-n_k < n'$ and $\mathsf{d}'_R(j_k) = q^{n'}r'$ if $n-n_k > n'$}
\end{equation}
for $k \in \{1,\dots,5\}$.

\medskip
We have all the elements to complete the proof. Since $n_2 < n_3 < n_4 < n_5$, we have $n_2 < n_3 < n-n'$ or $n_5 > n_4 > n-n'$. We are going to show that both cases give a contradiction, proving, thereby, the lemma. 

First, suppose that $n_2 < n_3 < n-n'$. Then, for $k \in \{2,3\}$, we have $n - n_k > n'$, and thus, by \eqref{eq:claim:cota_sincronos:6}, $\mathsf{d}'_R(j_k) = q^{n'}r'$. If $\ell = 0$, $d(j_k) = \mathsf{d}'_L(j_k) \mathsf{d}'_R(j_k) \leq_s \mathsf{u}'_L q^{n'}r'$. Then, $d(j_2)$ and $d(j_3)$ are suffix dependent, which gives that $D(j_2)$ is equivalent to $D(j_3)$, contradicting the irreducibility of $\cD'$. 
If $\ell > 0$, we have $\mathsf{d}_R(j_k) = v_1 (v_1^{-1}v_k) \mathsf{d}'_R(j_k) = v_1 q^{n_k+n'}r'$. Then, using \eqref{eq:claim:cota_sincronos:2},
\begin{equation*}
|q^{n_k}| = |v_1^{-1}v_k| \geq 
|v_1^{-1}v_2| \geq |\mathsf{d}'_M(j_2)| \geq |\mathsf{u}'_M| \geq \langle\cW\rangle,
\end{equation*}
and hence $d(j_2)$ and $d(j_3)$ share a common suffix of length $\langle\cW\rangle$. This is, $D(j_2) \sim D(j_3)$, which is a contradiction.

Finally, assume $n_5 > n_4 > n-n'$. We have, by \eqref{eq:claim:cota_sincronos:6}, that $\mathsf{d}'_R(j_k) = q^{n-n_k}r$ for $k \in \{4,5\}$. Hence, $\mathsf{d}_R(j_k) = v_1 (v_1^{-1}v_k) \mathsf{d}'_R(j_k) = v_1 q^{n_k} \mathsf{d}'_R(j_k) = v_1 q^n r$. In particular, condition \ref{lem:redux_if:alpha_prime} of Lemma \ref{lem:redux_if} holds for $\{D(j_4), D(j_5)\}$, contradicting the irreducibility of $\cD'$. This completes the proof.
\end{proof}

\begin{lem}\label{claim:regularidad}
Let $p \in \{1,\ldots,s\}$ be such that $\#\cD' (p) \geq 2$ and let $D(p') = \max \cD' (p)$, $D(p'') = \max \cD'(p)\setminus\{D(p')\}$. Then, there exist $w \in \cW$ and $w' \leq_p \mathsf{\tilde{d}}(s)\mathsf{\tilde{d}}(p'')^{-1}$ such that $w$ and $w'$ are suffix dependent, $|w| \geq |\mathsf{\tilde{d}}(p')|$ and $|w'| > |\mathsf{\tilde{d}}(s)| - |\mathsf{\tilde{d}}(p)|$.
\end{lem}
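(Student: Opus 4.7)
The plan is to exhibit $w$ and $w'$ via the natural choices $w := \mathsf{u}'_L$ and $w' := \mathsf{\tilde{d}}(s)\,\mathsf{\tilde{d}}(p'')^{-1}$, and then verify each required property by unwinding Definition \ref{def:DUset} together with the definition of the total order on $\cD'$. The word $\mathsf{u}'_L$ is the third component of the tuple $U\in\cW^5\times\N$, hence lies in $\cW$. The expression $w'$ is a well-defined nonempty word because $D(p'')<D(s)$ in the total order forces $\mathsf{\tilde{d}}(p'')<_p\mathsf{\tilde{d}}(s)$, and it is tautologically a prefix of $\mathsf{\tilde{d}}(s)\mathsf{\tilde{d}}(p'')^{-1}$.

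The suffix-dependence of $w$ and $w'$ follows from the chain $\mathsf{\tilde{d}} \leq_s \mathsf{d}'_L \leq_s \mathsf{u}'_L$: the first relation is the identity $\mathsf{d}'_L = \mathsf{d}_L\mathsf{d}_M\mathsf{\tilde{d}}$ (a direct rewriting of $\mathsf{\tilde{d}} = (\mathsf{d}_L\mathsf{d}_M)^{-1}\mathsf{d}'_L$), and the second is Definition \ref{def:DUset}(3). Applying the chain at the index $s$ gives $\mathsf{\tilde{d}}(s) \leq_s \mathsf{u}'_L$, and since $w' \leq_s \mathsf{\tilde{d}}(s)$ by construction, one concludes $w' \leq_s w$. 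The same chain, applied at the index $p'$, takes care of the first length inequality: $|w|=|\mathsf{u}'_L|\geq|\mathsf{d}'_L(p')|\geq|\mathsf{\tilde{d}}(p')|$.

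The strict bound $|w'|>|\mathsf{\tilde{d}}(s)|-|\mathsf{\tilde{d}}(p)|$ is the one place where the choice of $p''$ (rather than $p'$) matters. Because $D(p'')\in\cD'(p)$, the defining inequality of $\cD'(p)$ gives $|\mathsf{d}_R(p'')|<|\mathsf{\tilde{d}}(p)|$, and combining this with $\mathsf{\tilde{d}}(p'')\leq_p \mathsf{d}_R(p'')$ yields $|\mathsf{\tilde{d}}(p'')|<|\mathsf{\tilde{d}}(p)|$. Hence $|w'|=|\mathsf{\tilde{d}}(s)|-|\mathsf{\tilde{d}}(p'')|>|\mathsf{\tilde{d}}(s)|-|\mathsf{\tilde{d}}(p)|$. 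The whole argument is elementary bookkeeping once the candidate witnesses are fixed; no serious obstacle arises, and the only care needed is to track which of $p''$, $p'$, $p$, $s$ each inequality draws from.
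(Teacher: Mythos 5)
Your proposal breaks at the suffix-dependence step, and the flaw is a prefix/suffix confusion that cannot be repaired without essentially redoing the argument.

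Recall the paper's convention: if $u = wv$ then $wv^{-1} := u$, so $wv^{-1}$ is the \emph{prefix} of $w$ obtained by deleting the suffix $v$. Thus your candidate $w' := \mathsf{\tilde{d}}(s)\mathsf{\tilde{d}}(p'')^{-1}$ is, by construction, a \emph{prefix} of $\mathsf{\tilde{d}}(s)$ — not a suffix. The sentence ``since $w' \leq_s \mathsf{\tilde{d}}(s)$ by construction, one concludes $w' \leq_s w$'' is therefore false: you only know $w' \leq_p \mathsf{\tilde{d}}(s)$, and a prefix of a suffix of $\mathsf{u}'_L$ need not be a suffix of $\mathsf{u}'_L$. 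For a concrete failure, take $\mathsf{\tilde{d}}(s) = ababa$ and $\mathsf{\tilde{d}}(p'') = aba$ (both a prefix and a suffix of $\mathsf{\tilde{d}}(s)$, as required). Then $w' = ab$, while every suffix of $\mathsf{u}'_L$ of length $2$ ends in $\dots ba$; so $w'$ and $w = \mathsf{u}'_L$ are not suffix dependent and your chosen witnesses do not satisfy the lemma.

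This is not a small oversight that a cleverer $w'$ fixes automatically. The whole difficulty of the lemma is arranging the suffix-dependence, which forces one to delete from $\mathsf{\tilde{d}}(s)$ the \emph{same} suffix that is simultaneously deleted from $\mathsf{d}'_L(p'')$ (so that the relation $\mathsf{\tilde{d}}(s)\leq_s \mathsf{d}'_L(p'')$ survives the cancellation), while also locating a word $w\in\cW$ inside the $\cW^*$-decomposition $\mathsf{d}_L(p'')\mathsf{d}_M(p'')$ rather than taking $w = \mathsf{u}'_L$ outright. The paper's proof does exactly this: it compares $(\mathsf{d}'_L(p'')\mathsf{\tilde{d}}(p)^{-1})\mathsf{\tilde{d}}(p')$ with $\mathsf{d}_L(p'')$, splits into two cases, and in the second case sets $\mathsf{d}_L(p'')\mathsf{d}_M(p'') = vwv'$ with $w\in\cW$ and $w' = \mathsf{\tilde{d}}(s)(v'\mathsf{\tilde{d}}(p''))^{-1}$; it then needs an additional, nontrivial periodicity argument (via Fine--Wilf and Lemma~\ref{lem:redux_if}) to obtain the inequality $|w|\geq|\mathsf{\tilde{d}}(p')|$, which your choice $w = \mathsf{u}'_L$ obtained for free precisely because it sidesteps the real constraint. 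Your verification of the two length bounds is correct, but those were never the hard part; the proof of suffix dependence is missing.
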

\begin{proof}
Note that $p'' < p' < p$. 
Before proving the main statement of the lemma, we highlight two useful relations. First, note that
\begin{equation}\label{eq:regularidad:2}
\mathsf{d}_L(p'') \mathsf{d}_M(p'') \mathsf{\tilde{d}}(p'') = \mathsf{d}'_L(p'')
\end{equation}
as $D(p'')$ is simple. Second, since $\mathsf{u}_R$ and $\mathsf{u}'_L$ are, by Definition \ref{def:DUset}, the shortest words in $\cW$ satisfying $\mathsf{d}_R(p'')a(p'') \leq_p \mathsf{u}_R$ and $\mathsf{d}'_L(p'') \leq_s \mathsf{u}'_L$, respectively, we have, by condition \eqref{defi:double_interpretation:2} of the definition of simple d.i., that $|\mathsf{d}'_L(p'')| \geq \min(|\mathsf{u}_R|, |\mathsf{u}'_L|) \geq |\mathsf{\tilde{d}}(k)|$ for $k \in \{1,\dots,s\}$. This and the fact that $\mathsf{d}'_L(p'')$ and $\mathsf{\tilde{d}}(k)$ are both suffix of $\mathsf{u}'_L$ imply
\begin{equation}\label{eq:regularidad:3}
\text{$\mathsf{\tilde{d}}(k) \leq_s \mathsf{d}'_L(p'')$ for $k \in \{1,\dots,s\}$.}
\end{equation}

Now we are ready to prove the main statement of the lemma. Using \eqref{eq:regularidad:3} and $\mathsf{\tilde{d}}(p') \leq_p \mathsf{\tilde{d}}(p)$, we have $(\mathsf{d}'_L(p'') \mathsf{\tilde{d}}(p)^{-1}) \mathsf{\tilde{d}}(p') \leq_p \mathsf{d}'_L(p'')$. In addition, $\mathsf{d}_L(p'') \leq_p \mathsf{d}'_L(p'')$ by the simplicity of $D(p'')$. Thus, $(\mathsf{d}'_L(p'') \mathsf{\tilde{d}}(p)^{-1}) \mathsf{\tilde{d}}(p')$ and $\mathsf{d}_L(p'')$ are prefix dependent. In what follows, we split the proof in two cases according to which of these words is prefix of the other.
\begin{enumerate}[wide, labelwidth=!, labelindent=0pt]
\item[\textbf{(a)}] $(\mathsf{d}'_L(p'') \mathsf{\tilde{d}}(p)^{-1}) \mathsf{\tilde{d}}(p') \leq_p \mathsf{d}_L(p'')$. Observe that $\mathsf{\tilde{d}}(s) \leq_s \mathsf{u}'_L$ and $\mathsf{d}_M(p'') \mathsf{\tilde{d}}(p'') \leq_s \mathsf{d}'_L(p'') \leq_s \mathsf{u}'_L$, so $\mathsf{\tilde{d}}(s)$ and $\mathsf{d}_M(p'')\mathsf{\tilde{d}}(p'')$ are suffix dependent. In addition, from \eqref{eq:regularidad:2} and (a) we get
\begin{align}\label{eq:regularidad:x}
|\mathsf{d}_M(p'') \mathsf{\tilde{d}}(p'')| &=
|\mathsf{d}'_L(p'')|- |\mathsf{d}_L(p'')| \\ &\leq
|\mathsf{d}'_L(p'')|- |(\mathsf{d}'_L(p'') \mathsf{\tilde{d}}(p)^{-1})\mathsf{\tilde{d}}(p')| \notag \\ &=
|\mathsf{\tilde{d}}(p)| - |\mathsf{\tilde{d}}(p')| \leq 
|\mathsf{\tilde{d}}(s)|. \notag
\end{align}
We conclude that
\begin{equation*}
\mathsf{d}_M(p'')\mathsf{\tilde{d}}(p'') \leq_s \mathsf{\tilde{d}}(s).
\end{equation*}
Thus, it makes sense to define $w' \coloneqq \mathsf{\tilde{d}}(s) (\mathsf{d}_M(p'')\mathsf{\tilde{d}}(p''))^{-1}$. Clearly, $w' \leq_p \mathsf{\tilde{d}}(s) \mathsf{\tilde{d}}(p'')^{-1}$. Let $w \in \cW$ be a word satisfying $\mathsf{d}_L(p'') \leq_s w$, as in the definition of interpretation. Observe that, by \eqref{eq:regularidad:3} and \eqref{eq:regularidad:2},
\begin{equation*}
w' \leq_s \mathsf{d}'_L(p'')(\mathsf{d}_M(p'')\mathsf{\tilde{d}}(p''))^{-1} = \mathsf{d}_L(p'') 
\leq_s  w,
\end{equation*}
so $w$ and $w'$ are suffix dependent. It left to prove that $|w'| \geq |\mathsf{\tilde{d}}(s)| - |\mathsf{\tilde{d}}(p)|$ and $|w| \geq |\mathsf{\tilde{d}}(p')|$. For this, we note that in \eqref{eq:regularidad:x} it was shown that $|\mathsf{d}_M(p'')\mathsf{\tilde{d}}(p'')| \leq |\mathsf{\tilde{d}}(p)| - |\mathsf{\tilde{d}}(p')|$.
Thus,
\begin{equation*}
|w'| \geq |\mathsf{\tilde{d}}(s)| - |\mathsf{\tilde{d}}(p)| + |\mathsf{\tilde{d}}(p')| \geq \max(|\mathsf{\tilde{d}}(s)| - |\mathsf{\tilde{d}}(p)|, |\mathsf{\tilde{d}}(p')|).
\end{equation*}
We conclude that $|w'| \geq |\mathsf{\tilde{d}}(s)| - |\mathsf{\tilde{d}}(p)|$ and, since $w' \leq_s w$, $|w| \geq |w'| \geq |\mathsf{\tilde{d}}(p')|$. This completes the proof in case (a).

\medskip
\item[\textbf{(b)}] $\mathsf{d}_L(p'') <_p (\mathsf{d}'_L(p'') \mathsf{\tilde{d}}(p)^{-1}) \mathsf{\tilde{d}}(p')$.
We start by claiming that
\begin{equation}\label{eq:regularidad:1}
|\mathsf{\tilde{d}}(p'')| +|\mathsf{\tilde{d}}(p')| < |\mathsf{\tilde{d}}(p)|.
\end{equation}
Assume that \eqref{eq:regularidad:1} does not hold. Let $q$ be the shortest word satisfying $\mathsf{\tilde{d}}(p) \leq_s \prescript{\infty}{}{q}$. As we commented before Lemma \ref{claim:cota_sincronos}, condition $p',p'' < p$ implies that $\mathsf{\tilde{d}}(p')$, as well as $\mathsf{\tilde{d}}(p'')$, are prefixes and suffixes of $\mathsf{\tilde{d}}(p)$. So $|\mathsf{\tilde{d}}(p)| -|\mathsf{\tilde{d}}(p')|$ and $|\mathsf{\tilde{d}}(p)| -|\mathsf{\tilde{d}}(p'')|$ are periods of $\mathsf{\tilde{d}}(p)$. Moreover, since we are assuming \eqref{eq:regularidad:1} is not true, we also have that $(|\mathsf{\tilde{d}}(p)|-|\mathsf{\tilde{d}}(p')|)+(|\mathsf{\tilde{d}}(p)|-|\mathsf{\tilde{d}}(p'')|) \leq |\mathsf{\tilde{d}}(p)|$. Then, by Lemma \ref{lem:fine_wilf}, we obtain that $|q|$ divides $|\mathsf{\tilde{d}}(p)|-|\mathsf{\tilde{d}}(p')|$ and $|\mathsf{\tilde{d}}(p)|-|\mathsf{\tilde{d}}(p'')|$. Hence, there exists $n',n'' \in \N$ such that $q^{n'} = \mathsf{\tilde{d}}(p')^{-1}\mathsf{\tilde{d}}(p)$ and $q^{n''} = \mathsf{\tilde{d}}(p'')^{-1}\mathsf{\tilde{d}}(p)$. Now, since $p',p'' \in \cD'(p)$, we can write $\mathsf{d}'_M(p') \mathsf{d}'_R(p') a(p') = \mathsf{\tilde{d}}(p')^{-1} \mathsf{d}_R(p') a(p') \leq_p \mathsf{\tilde{d}}(p')^{-1} \mathsf{\tilde{d}}(p) = q^{n'} \leq_p q^\infty$ and, in a similar way, $\mathsf{d}'_M(p'') \mathsf{d}'_R(p'') a(p'') \leq_p q^\infty$. Thus, $\{D(p'),D(p'')\}$ is reducible by part \ref{lem:redux_if:prefix} of Lemma \ref{lem:redux_if}, which contradicts the irreducibility of $\cD'$. This proves the claim.
\smallskip

From \eqref{eq:regularidad:1} and \eqref{eq:regularidad:2} we get  
\begin{align*}
|(\mathsf{d}'_L(p'') \mathsf{\tilde{d}}(p)^{-1}) \mathsf{\tilde{d}}(p')| &=
|\mathsf{d}'_L(p'')|-|\mathsf{\tilde{d}}(p)|+|\mathsf{\tilde{d}}(p')| \\ &<
|\mathsf{d}'_L(p'')| - |\mathsf{\tilde{d}}(p'')| = 
|\mathsf{\tilde{d}}(p'')^{-1} \mathsf{d}'_L(p'')| =
|\mathsf{d}_L(p'') \mathsf{d}_M(p'')|.
\end{align*}
Then, since 
\begin{equation*}
(\mathsf{d}'_L(p'') \mathsf{\tilde{d}}(p)^{-1}) \mathsf{\tilde{d}}(p') \leq_p
(\mathsf{d}'_L(p'') \mathsf{\tilde{d}}(p)^{-1}) \mathsf{\tilde{d}}(p) =
\mathsf{d}'_L(p'') = \mathsf{d}_L(p'') \mathsf{d}_M(p'')\mathsf{\tilde{d}}(p''),
\end{equation*}
we obtain that $(\mathsf{d}'_L(p'') \mathsf{\tilde{d}}(p)^{-1}) \mathsf{\tilde{d}}(p') <_p \mathsf{d}_L(p'') \mathsf{d}_M(p'')$. This and (b) can be written together as
\begin{equation*}
\mathsf{d}_L(p'') <_p
(\mathsf{d}'_L(p'') \mathsf{\tilde{d}}(p)^{-1}) \mathsf{\tilde{d}}(p') <_p 
\mathsf{d}_L(p'') \mathsf{d}_M(p'').
\end{equation*}
Thus, we can write $\mathsf{d}_L(p'') \mathsf{d}_M(p'') = vwv'$, where $v \in \mathsf{d}_L(p'') \cW^*$, $w \in \cW$, $v' \in \cW^*$ and
\begin{equation}\label{eq:regularidad:4}
v <_p (\mathsf{d}'_L(p'') \mathsf{\tilde{d}}(p)^{-1}) \mathsf{\tilde{d}}(p') \leq_p vw.
\end{equation}
The word $w$ is the one we need in the statement of the lemma. To define $w'$, we first note that $\mathsf{\tilde{d}}(s) \leq_s \mathsf{d}'_L(p'')$ and $v' \mathsf{\tilde{d}}(p'') \leq_s \mathsf{d}_L(p'') \mathsf{d}_M(p'') \mathsf{\tilde{d}}(p'') = \mathsf{d}'_L(p'')$, so $\mathsf{\tilde{d}}(s)$ and $v' \mathsf{\tilde{d}}(p'')$ are suffix dependent. Moreover, using \eqref{eq:regularidad:4} we get
\begin{equation}\label{eq:regularidad:4.5}
|v' \mathsf{\tilde{d}}(p'')| = |\mathsf{d}'_L(p'')| - |vw| \leq 
|\mathsf{d}'_L(p'')| - |(\mathsf{d}'_L(p'') \mathsf{\tilde{d}}(p)^{-1}) \mathsf{\tilde{d}}(p')| =
|\mathsf{\tilde{d}}(p)| - |\mathsf{\tilde{d}}(p')|.
\end{equation}
Then, $|v' \mathsf{\tilde{d}}(p'')| \leq |\mathsf{\tilde{d}}(p)| - |\mathsf{\tilde{d}}(p')| \leq |\mathsf{\tilde{d}}(s)|$ and $v'\mathsf{\tilde{d}}(p'') \leq_s \mathsf{\tilde{d}}(s)$. Now it makes sense to define $w' \coloneqq \mathsf{\tilde{d}}(s) (v'\mathsf{\tilde{d}}(p''))^{-1}$, which clearly verifies $w' \leq_p \mathsf{\tilde{d}}(s)\mathsf{\tilde{d}}(p'')^{-1}$. It is also clear that $w$ and $w'$ are suffix dependent. Indeed, from \eqref{eq:regularidad:3} and \eqref{eq:regularidad:2} we have $w' \leq_s \mathsf{d}'_L(p'')(v'\mathsf{\tilde{d}}(p''))^{-1} = vw$. 

Now, from \eqref{eq:regularidad:4.5}, $|w'| \geq |\mathsf{\tilde{d}}(s)|- |\mathsf{\tilde{d}}(p)| + |\mathsf{\tilde{d}}(p')| \geq |\mathsf{\tilde{d}}(s)| - |\mathsf{\tilde{d}}(p)|$, proving the desired condition on the length of $w'$. It only left to prove that $|w| \geq |\mathsf{\tilde{d}}(p')|$. We argue by contradiction. Assume that
\begin{equation}\label{eq:regularidad:5}
|w| < |\mathsf{\tilde{d}}(p')|.
\end{equation}
First, we prove that it makes sense to define the word
\begin{equation}\label{eq:regularidad:6}
w'' \coloneqq 
((\mathsf{d}'_L(p'')\mathsf{\tilde{d}}(p)^{-1})^{-1}v)^{-1}\mathsf{d}_R(p') \in \cA^+.
\end{equation}
From \eqref{eq:regularidad:4} and \eqref{eq:regularidad:5} we get $|v| \geq |(\mathsf{d}'_L(p'')\mathsf{\tilde{d}}(p)^{-1})\mathsf{\tilde{d}}(p')| -|w| > |\mathsf{d}'_L(p'')\mathsf{\tilde{d}}(p)^{-1}|$. But, $v \leq_p \mathsf{d}_L(p'')\mathsf{d}_M(p'') \leq_p \mathsf{d}'_L(p'')$ and $\mathsf{d}'_L(p'')\mathsf{\tilde{d}}(p)^{-1} \leq_p \mathsf{d}'_L(p'')$, so $\mathsf{d}'_L(p'')\mathsf{\tilde{d}}(p)^{-1} <_p v$ and $(\mathsf{d}'_L(p'')\mathsf{\tilde{d}}(p)^{-1})^{-1}v$ exists and is not the empty word. Hence, by \eqref{eq:regularidad:4}, 
\begin{equation}\label{eq:regularidad:middle}
(\mathsf{d}'_L(p'')\mathsf{\tilde{d}}(p)^{-1})^{-1}v <_p 
\mathsf{\tilde{d}}(p') \leq_p \mathsf{d}_R(p')
\end{equation}
and $w''$ is well defined.

Now, we have $vw \leq_p \mathsf{d}_L(p'')\mathsf{d}_M(p'') \leq_p \mathsf{d}'_L(p'')$ and, using $p'\in\cD'(p)$, that $(\mathsf{d}'_L(p'') \mathsf{\tilde{d}}(p)^{-1}) \mathsf{d}_R(p') \leq_p (\mathsf{d}'_L(p'') \mathsf{\tilde{d}}(p)^{-1}) \mathsf{\tilde{d}}(p) = \mathsf{d}'_L(p'')$. Thus, $vw$ and \break $(\mathsf{d}'_L(p'') \mathsf{\tilde{d}}(p)^{-1}) \mathsf{d}_R(p')$ are prefix dependent. Therefore, there are two cases: $vw$ is prefix of $(\mathsf{d}'_L(p'') \mathsf{\tilde{d}}(p)^{-1}) \mathsf{d}_R(p')$ and $(\mathsf{d}'_L(p'') \mathsf{\tilde{d}}(p)^{-1}) \mathsf{d}_R(p')$ is a strict prefix of $vw$; in each of these cases we will build a reduction for $D(p')$, producing a contradiction.

\medskip
\begin{enumerate}[wide, labelwidth=!, labelindent=0pt]

\item[\textbf{(b.1)}] $vw \leq_p (\mathsf{d}'_L(p'') \mathsf{\tilde{d}}(p)^{-1}) \mathsf{d}_R(p')$.
We start by building a d.i. of $w''$. Note that
\begin{equation}\label{eq:regularidad:7}
w'' a(p') \leq_p wv'\mathsf{\tilde{d}}(p'').
\end{equation}
Indeed, since $D(p') \in \cD'(p)$ and $(\mathsf{d}'_L(p'')\mathsf{\tilde{d}}(p)^{-1})\mathsf{\tilde{d}}(p) = \mathsf{d}'_L(p'') = vwv'\mathsf{\tilde{d}}(p'')$, we have $\mathsf{d}_R(p')a(p') \leq_p \mathsf{\tilde{d}}(p) = (\mathsf{d}'_L(p'')\mathsf{\tilde{d}}(p)^{-1})^{-1}vwv'\mathsf{\tilde{d}}(p'')$, which implies \eqref{eq:regularidad:7}. Now, since $w \in \cW$, $v' \in \cW^*$ and $\mathsf{\tilde{d}}(p') <_p \mathsf{u}_R$, the word $wv'\mathsf{\tilde{d}}(p')$ has an interpretation of the form $J = w, v', \mathsf{\tilde{d}}(p'), a$. Moreover, using (b.1) we can get $|w''| = |\mathsf{d}_R(p')|+|\mathsf{d}'_L(p'')\mathsf{\tilde{d}}(p)^{-1}|-|v| \geq |w|$. Hence, by \eqref{eq:regularidad:7}, Lemma \ref{lem:inherit} can be applied with $J$ to obtain an interpretation of $w''$ having the form $I' = w, r, r', a(p')$. We need another interpretation of $w''$. Note that in the middle step of \eqref{eq:regularidad:middle} we showed that $(\mathsf{d}'_L(p'')\mathsf{\tilde{d}}(p)^{-1})^{-1}v <_p \mathsf{\tilde{d}}(p')$. In particular, the word $((\mathsf{d}'_L(p'')\mathsf{\tilde{d}}(p)^{-1})^{-1}v)^{-1} \mathsf{\tilde{d}}(p')$ is nonempty and is a suffix of $\mathsf{u}'_L \in \cW$. Then,
\begin{equation*}
I \coloneqq ((\mathsf{d}'_L(p'')\mathsf{\tilde{d}}(p)^{-1})^{-1}v)^{-1} \mathsf{\tilde{d}}(p'), 
\mathsf{d}'_M(p'), \mathsf{d}'_R(p'), a'(p'))
\end{equation*}
is an interpretation of $w''$ (here, we used that $\mathsf{\tilde{d}}(p')\mathsf{d}'_M(p')\mathsf{d}'_R(p') = \mathsf{d}_R(p')$). We set $D = (I,I')$. Since $a(p') \not= a'(p')$, $D$ is a d.i. of $w''$.

Now we can conclude the proof of this case. From \eqref{eq:regularidad:4} we have $|v| \geq $ \break 
$|(\mathsf{d}'_L(p'')\mathsf{\tilde{d}}(p)^{-1})\mathsf{\tilde{d}}(p')| - |w|$, which implies $|((\mathsf{d}'_L(p'')\mathsf{\tilde{d}}(p)^{-1})^{-1}v)^{-1} \mathsf{\tilde{d}}(p')| \leq |w| \leq |wr|$. This and that $w \in \cW$ allow us to use Lemma \ref{lem:extract_di} to obtain a simple d.i. $E$ of a word $e$ such that $e \leq_s w''$. Since $w'' <_s \mathsf{d}_R(p') <_s d(p')$, we have that $D(p')$ reduces to $E$. This is the desired contradiction.

\medskip
\item[\textbf{(b.2)}] $(\mathsf{d}'_L(p'') \mathsf{\tilde{d}}(p)^{-1}) \mathsf{d}_R(p') <_p vw$.
We are going to build a simple d.i. $D = (I;I')$ of $\mathsf{d}_R(p') <_s d(p')$, proving, thereby, that $D(p')$ has a reduction.

Let $I' = \mathsf{\tilde{d}}(p'), \mathsf{d}'_M(p'), \mathsf{d}'_R(p'), a'(p')$. It is clear that $I'$ is an interpretation of $\mathsf{d}_R(p')$ since $\mathsf{\tilde{d}}(p') \leq_s \mathsf{u}'_L$, $\mathsf{d}'_M(p') \in \cW^*$, $\mathsf{d}'_R(p')a'(p') \leq_p \mathsf{u}'_R$ and $|\mathsf{\tilde{d}}(p')| > |\mathsf{\tilde{d}}(p'')| \geq 0$. To define $I$, observe that in the proof of \eqref{eq:regularidad:6} we showed that $(\mathsf{d}'_L(p'')\mathsf{\tilde{d}}(p)^{-1})^{-1}v$ exists and is not the empty word. But, moreover, from $v \in \mathsf{d}_L(p'') \cW^*$ we see that we can write $(\mathsf{d}'_L(p'')\mathsf{\tilde{d}}(p)^{-1})^{-1}v = rr'$ in such a way that $r$ is a nonempty suffix of some word in $\cW$ and $r' \in \cW^*$. Since, by definition, $\mathsf{d}_R(p') = rr'w''$, to prove that $I \coloneqq  r, r', w'', a(p')$ is an interpretation of $\mathsf{d}_R(p')$ it is enough to show that $w''a(p') \leq_p w$. From (b.2) we get $rr'w'' = \mathsf{d}_R(p') <_p rr'w$, so $w''a' \leq_p w$ for some $a' \in \cA$. Then, using that $vw \leq_p vwv'\mathsf{\tilde{d}}(p'') = \mathsf{d}'_L(p'')$, we obtain
\begin{align*}
\mathsf{d}_R(p')a' \leq_p rr'w &= 
(\mathsf{d}'_L(p'')\mathsf{\tilde{d}}(p)^{-1})^{-1}vw \\ &\leq_p
(\mathsf{d}'_L(p'')\mathsf{\tilde{d}}(p)^{-1})^{-1}
\mathsf{d}'_L(p'') =
\mathsf{\tilde{d}}(p) \leq_p \mathsf{u}_R.
\end{align*}
Since we also have $\mathsf{d}_R(p')a(p') \leq_p \mathsf{u}_R$, we deduce that $a' = a(p')$. Hence, $w''a(p') \leq_p w$ and $I$ is an interpretation of $\mathsf{d}_R(p')$. Being $a(p') \not= a'(p')$, we conclude that $D \coloneqq (I;I')$ is a d.i. of $\mathsf{d}_R(p')$. 

Finally, we prove that $D$ is simple. Using the middle step in \eqref{eq:regularidad:middle} we get $rr' = (\mathsf{d}'_L(p'')^{-1}\mathsf{\tilde{d}}(p))^{-1}v <_p \mathsf{\tilde{d}}(p')$. This implies that $\mathsf{d}'_M(p')\mathsf{d}'_R(p') = \mathsf{\tilde{d}}(p')^{-1}\mathsf{d}_R(p') \leq_s (rr')^{-1}\mathsf{d}_R(p') = w''$, which is the first condition in Definition \ref{defi:double_interpretation}. Since $w''a(p') \leq_p w$ and, by \eqref{eq:regularidad:5}, $|\mathsf{\tilde{d}}(p')| \geq |w|$, the second condition also holds. Hence, $D$ is simple and $D(p')$ reduces to it.
\end{enumerate}
\end{enumerate}
\end{proof}

Remark that in the last paragraph it was the first time that in a proof we build a reduction to a simple d.i. satisfying the \emph{second} condition of \eqref{defi:double_interpretation:2} in Definition \ref{defi:double_interpretation}.

\subsection{Proof of Proposition \ref{lem:bound_on_irreducible}}

\begin{cprop}[\ref{lem:bound_on_irreducible}]
Any irreducible subset of  $\cD_U$ has at most $61(\#\cW)$ elements.
\end{cprop}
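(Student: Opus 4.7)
The plan is to argue by descent through the nested family $\cD'(1) \subseteq \cD'(2) \subseteq \dots \subseteq \cD'(s+1) = \cD'$, where $\cD' = \{D(1) < \dots < D(s)\} \subseteq \cD_U$ is an arbitrary irreducible subset. Define $p_0 = s+1$ and, recursively while $\cD'(p_k) \neq \emptyset$, put $p_{k+1}$ to be the index such that $D(p_{k+1}) = \max \cD'(p_k)$; this produces a strictly decreasing sequence terminating at some $K$ with $\cD'(p_K) = \emptyset$. Lemma \ref{claim:cota_sincronos} applied at each $p_k$ (in the range $\{0, 1, \dots, K-1\}$) gives $\#(\cD'(p_k) \setminus \cD'(p_{k+1})) \leq 6$, so telescoping yields $\#\cD' \leq 6K$. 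The problem therefore reduces to bounding $K$ in terms of $\#\cW$.

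For every $k$ with $\#\cD'(p_k) \geq 2$ and $p_k \leq s$ (which covers all steps except the initial one $k=0$ and possibly the final one), Lemma \ref{claim:regularidad} applied at $p = p_k$ produces a word $w_k \in \cW$ together with a prefix $w'_k \leq_p \mathsf{\tilde{d}}(s)\mathsf{\tilde{d}}(p''_k)^{-1}$, such that $w_k, w'_k$ are suffix-dependent, $|w_k| \geq |\mathsf{\tilde{d}}(p_{k+1})|$, and $|w'_k| > |\mathsf{\tilde{d}}(s)| - |\mathsf{\tilde{d}}(p_k)|$. Here $p''_k$ denotes the index of the second-largest element of $\cD'(p_k)$. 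The heart of the proof is to show the map $k \mapsto w_k$ has small fibers (on the order of $10$), so that $K = O(\#\cW)$ and, combined with $\#\cD' \leq 6K$, one obtains $\#\cD' \leq 61\#\cW$.

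The fiber bound is argued as follows. Suppose $i_1 < \dots < i_m$ all map to a common $w \in \cW$. Since every $w'_{i_\ell}$ is a prefix of $\mathsf{\tilde{d}}(s)$ suffix-dependent with $w$, the common word $w$ occurs at $m$ distinct positions inside $\mathsf{\tilde{d}}(s)$, each lying within the long suffix of length roughly $|\mathsf{\tilde{d}}(s)| - |\mathsf{\tilde{d}}(p_{i_1})|$. These $m$ occurrences yield several periods of a common suffix of $\mathsf{\tilde{d}}(s)$, and an iterated application of the Fine--Wilf lemma (Lemma \ref{lem:fine_wilf}) collapses them to a small common period $q$. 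The resulting periodicity, read off through the structural constraints on $\cD_U$ in Definition \ref{def:DUset} (particularly items \eqref{eq:def_S_xyzu:2} and \eqref{eq:def_S_xyzu:3}, which tie $\mathsf{d}_R a_D$ and $\mathsf{d}'_L$ to the fixed prefix/suffix $\mathsf{u}_R$ and $\mathsf{u}'_L$), lets us locate two elements $D, E \in \cD'$ to which Lemma \ref{lem:redux_if} (parts \ref{lem:redux_if:prefix} or \ref{lem:redux_if:alpha_prime}) applies; this contradicts the irreducibility of $\cD'$ once $m$ exceeds a small absolute constant.

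The main obstacle is precisely this last step: converting the periodicity of $\mathsf{\tilde{d}}(s)$ forced by a large fiber of $k \mapsto w_k$ into a genuine reducible pair inside $\cD'$, while keeping track of how $w$ must simultaneously align with the prescribed words $\mathsf{u}_R, \mathsf{u}'_L, \mathsf{u}'_M, \mathsf{u}'_R$ coming from $U$. This analysis will mirror (and refine) the Fine--Wilf computation carried out in the proof of Lemma \ref{claim:cota_sincronos}, but must additionally exploit the growing lower bound $|w'_{i_\ell}| > |\mathsf{\tilde{d}}(s)| - |\mathsf{\tilde{d}}(p_{i_\ell})|$ to guarantee enough overlap between successive occurrences of $w$. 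Optimizing the constants in this case analysis is what produces the final factor $61$.
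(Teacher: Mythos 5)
Your outline follows the paper's own strategy exactly: the nested sets $\cD'(j)$, the descending sequence $(p_i)$, the telescoping bound $\#\cD'\leq 6t+1$ from Lemma \ref{claim:cota_sincronos}, and then bounding the number of steps $t$ by showing that the assignment $i\mapsto w_i$ from Lemma \ref{claim:regularidad} cannot have large fibers. The problem is that the last step --- the fiber bound --- is the entire substance of the proposition's proof, and you have not carried it out; you explicitly defer it (``The main obstacle is precisely this last step... This analysis will mirror...''). As written, the argument that a large fiber forces a reducible pair in $\cD'$ is a description of a plan, not a proof, so there is a genuine gap.

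Two concrete points you would have to confront in closing it. First, it does not suffice to extract $m$ indices with a common $w$: you must extract indices $i_5<\dots<i_1$ that are additionally \emph{spaced}, $i_{k+1}+2\leq i_k$, because the spacing is what guarantees $p''_{i_{k+1}}\geq p_{i_k}$ and hence the nesting chain $\mathsf{\tilde{d}}(s)\mathsf{\tilde{d}}(p_{i_{k+1}})^{-1}<_p w'_{i_{k+1}}\leq_p \mathsf{\tilde{d}}(s)\mathsf{\tilde{d}}(p_{i_k})^{-1}<_p w'_{i_k}$ on which the whole Fine--Wilf computation rests; this is why the contradiction hypothesis is $t>8\#\cW+2$ (yielding $6(8\#\cW+2)+1\leq 61\#\cW$) rather than a fiber of size ``on the order of $10$''. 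Second, the passage from the common period $q$ of $v=(\mathsf{\tilde{d}}(s)\mathsf{\tilde{d}}(p_{i_4})^{-1})^{-1}w'_{i_1}$ to a reducible pair is not automatic: one must show $|q|$ divides $|\mathsf{\tilde{d}}(p_{i_4})|-|\mathsf{\tilde{d}}(p_{i_k})|$ for $k\in\{2,3\}$, identify $q$ with the minimal period $\tilde q$ of $\mathsf{\tilde{d}}(p_{i_4})$, and only then deduce $\mathsf{d}'_M(p_{i_k})\mathsf{d}'_R(p_{i_k})a(p_{i_k})\leq_p r^{-1}q^\infty$ for $k=2,3$, which puts $\{D(p_{i_2}),D(p_{i_3})\}$ under case (i) of Lemma \ref{lem:redux_if}. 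None of these steps is routine, and until they are written out the proposition is not proved.
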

\begin{proof}
Let $\cD'$ be an irreducible subset of $\cD_U$. Recall that, with the notation introduced above, $D(1) < \dots < D(s)$ are the elements of $\cD'$ deployed in increasing order, $\cD' (s+1) = \cD' $  and $\cD' (j) =
\{D \in \cD' : \mathsf{d}_Ra_D \leq_p \mathsf{\tilde{d}}(j)\} =
\{D \in \cD' : |\mathsf{d}_R| < |\mathsf{\tilde{d}}(j)|\}$
for $j \in \{1,\dots,s\}$. 

We define recursively a finite decreasing sequence $(p_i)_{i=0}^{t+1}$. We start with $p_0 = s+1$. Then, for $i\geq 0$: a) if 
$\#\cD' (p_i) \leq 1$ we put $p_{i+1} = 1$ and the procedure stops; b) if $\#\cD' (p_i) > 1$, set $D(p_{i+1}) = \max \cD'(p_i)$.
Observe that $\cD'(p_{i+1}) \subsetneq \cD'(p_i)$. Let $t \geq 0$ be the first integer for which $\# \cD'(p_t) \leq 1$, so that  $\cD'(p_{t+1})=\cD'(1) = \emptyset$. This construction gives
\begin{equation*}
\cD'  =
\bigcup_{i = 0}^{t} \cD' (p_i) \backslash \cD' (p_{i+1}).
\end{equation*}
From Lemma \ref{claim:cota_sincronos} we get that $\#\cD'  \leq 6t+1$. To complete the proof we are going to show that $t \leq 8\# \cW+2$.

We proceed by contradiction, so we suppose $t > 8 \#\cW+2$. This will imply that $\cD'$ is reducible, which contradicts our hypothesis.

Let $1 \leq i \leq t-1$. Since $p_i \not= s+1$ and $\#\cD'(p_i) > 1$, we can define $D(p''_i) = \max\cD'(p_i)\setminus\{D(p_{i+1})\}$ and use Lemma \ref{claim:regularidad} with $\cD'(p_i)$ to obtain suffix dependent words $w_i \in  \cW$ and $w'_i \in \cA^*$ such that
\begin{equation}\label{eq:lem:bound_on_irreducible:1}
\text{(i) $|w_i| > |\mathsf{\tilde{d}}(p_{i+1})|$,\quad
(ii) $|w'_i| \geq |\mathsf{\tilde{d}}(s)|-|\mathsf{\tilde{d}}(p_i)|$, \quad
(iii) $w'_i \leq_p \mathsf{\tilde{d}}(s)\mathsf{\tilde{d}}(p''_i)^{-1}$.}
\end{equation}
Then, by the Pigeonhole Principle, we can find $1 \leq i_5 < \dots < i_1 \leq t-1$ such that
\begin{equation*}
\text{(a) $w \coloneqq w_{i_1} = \dots = w_{i_5}$  and
(b) $i_{k+1} +2 \leq i_k$ for any $k \in \{1,\dots,4\}$.}
\end{equation*}
Using (a) and (b) we are going to obtain relations \eqref{eq:lem:bound_on_irreducible:2} and \eqref{eq:lem:bound_on_irreducible:2.1} below.

\smallskip
First, we use (b) to prove that
\begin{equation}\label{eq:lem:bound_on_irreducible:2}
\mathsf{\tilde{d}}(s)\mathsf{\tilde{d}}(p_{i_{k+1}})^{-1} <_p w'_{i_{k+1}} \leq_p
\mathsf{\tilde{d}}(s)\mathsf{\tilde{d}}(p_{i_k})^{-1} <_p w'_{i_k}
\quad\text{for any $k \in \{1,\dots,4\}$.}
\end{equation}
Let $k \in \{1,\dots,4\}$. By (b), we have $i_{k+1} \leq i_{k+1}+1 < i_{k+1}+2 \leq t-1$. Thus, $D(p_{i_{k+1}+2}) < D(p_{i_{k+1}+1})$ and $D(p_{i_{k+1}+1}), D(p_{i_{k+1}+2}) \in \cD'(p_{i_{k+1}})$, which implies that $p''_{i_{k+1}} \geq p_{i_{k+1}+2}$ by the definition of $p''_{i_{k+1}}$. Being $p_{i_{k+1}+2} \geq p_{i_k}$ by (b), we obtain $p''_{i_{k+1}} \geq p_{i_k}$. This and (iii) of \eqref{eq:lem:bound_on_irreducible:1} imply $w'_{i_{k+1}} \leq_p \mathsf{\tilde{d}}(s) \mathsf{\tilde{d}}(p''_{i_{k+1}})^{-1} \leq_p \mathsf{\tilde{d}}(s) \mathsf{\tilde{d}}(p_{i_k})^{-1}$. This proves the middle inequality of \eqref{eq:lem:bound_on_irreducible:2}.
Let $k \in \{1,\dots,5\}$. Since $w'_{i_k} \leq_p \mathsf{\tilde{d}}(s)\mathsf{\tilde{d}}(p''_{i_k})^{-1} \leq_p \mathsf{\tilde{d}}(s)$ by (iii) of \eqref{eq:lem:bound_on_irreducible:1} and $\mathsf{\tilde{d}}(s)\mathsf{\tilde{d}}(p_{i_k})^{-1} \leq_p \mathsf{\tilde{d}}(s)$, we have that $w'_{i_k}$ and $\mathsf{\tilde{d}}(s)\mathsf{\tilde{d}}(p_{i_k})^{-1}$ are prefix dependent. Moreover, $|w'_{i_k}| > |\mathsf{\tilde{d}}(s)\mathsf{\tilde{d}}(p_{i_k})^{-1}|$ by (ii) of \eqref{eq:lem:bound_on_irreducible:1}, so $\mathsf{\tilde{d}}(s)\mathsf{\tilde{d}}(p_{i_k})^{-1} <_p w'_{i_k}$. This proves the first and last inequality of \eqref{eq:lem:bound_on_irreducible:2}, completing the proof.

\smallskip
Thanks to \eqref{eq:lem:bound_on_irreducible:2}, the word $(\mathsf{\tilde{d}}(s) \mathsf{\tilde{d}}(p_{i_k})^{-1})^{-1}w'_{i_{k'}}$ exists for any $1\leq k' \leq k \leq 5$. We will use this fact freely through the proof.

Next, we want to obtain from (a) that
\begin{equation}\label{eq:lem:bound_on_irreducible:2.1}
\text{$(\mathsf{\tilde{d}}(s) \mathsf{\tilde{d}}(p_{i_4})^{-1})^{-1}w'_{i_k}
\leq_s w$ for $k \in \{1,\dots,4\}$.}
\end{equation}
By (a) and (i) of \eqref{eq:lem:bound_on_irreducible:1}, we have $|\mathsf{\tilde{d}}(p_{i_4})| \leq |\mathsf{\tilde{d}}(p_{i_5+1})| \leq |w|$. This and (iii) imply
\begin{align*}
|(\mathsf{\tilde{d}}(s) \mathsf{\tilde{d}}(p_{i_4})^{-1})^{-1}w'_{i_k}| \leq |\mathsf{\tilde{d}}(s)\mathsf{\tilde{d}}(p''_{i_k})^{-1}| - |\mathsf{\tilde{d}}(s)\mathsf{\tilde{d}}(p_{i_4})^{-1}| \leq |\mathsf{\tilde{d}}(p_{i_4})| \leq |w|.
\end{align*}
But, being $w$ and $(\mathsf{\tilde{d}}(s) \mathsf{\tilde{d}}(p_{i_4})^{-1})^{-1}w'_{i_k}$ suffix dependent since $w$ and $w'_{i_k}$ have the same property and $(\mathsf{\tilde{d}}(s) \mathsf{\tilde{d}}(p_{i_4})^{-1})^{-1}w'_{i_k} \leq_s w'_{i_k}$, we obtain that $(\mathsf{\tilde{d}}(s) \mathsf{\tilde{d}}(p_{i_4})^{-1})^{-1}w'_{i_k}
\leq_s w$, as desired.

\smallskip
Now we use relations \eqref{eq:lem:bound_on_irreducible:2} and \eqref{eq:lem:bound_on_irreducible:2.1} to obtain restrictions on the smallest period of $v \coloneqq (\mathsf{\tilde{d}}(s) \mathsf{\tilde{d}}(p_{i_4})^{-1})^{-1}w'_{i_1}$. More precisely, we claim that if $q \in \cA^+$ is the shortest word satisfying $v \leq_p q^\infty$, then $|q|$ divides $|\mathsf{\tilde{d}}(p_{i_4})|-|\mathsf{\tilde{d}}(p_{i_k})|$ for $k \in \{2,3\}$.

Fix $k \in \{2,3\}$. First, observe that $v \leq_s w$ and $v((w'_{i_2})^{-1}w'_{i_1})^{-1} =  \break (\mathsf{\tilde{d}}(s) \mathsf{\tilde{d}}(p_{i_4})^{-1})^{-1}w'_{i_2} \leq_s w$ by \eqref{eq:lem:bound_on_irreducible:2.1}. Being $(w'_{i_2})^{-1}w'_{i_1} \not= 1$ by \eqref{eq:lem:bound_on_irreducible:1}, we deduce that $v \leq_s \prescript{\infty}{}{((w'_{i_2})^{-1}w'_{i_1})}$. This implies that $|q| \leq |(w'_{i_2})^{-1}w'_{i_1}|$. Thus,
\begin{align}\label{eq:lem:bound_on_irreducible:inter1}
|q| + |\mathsf{\tilde{d}}(p_{i_4})\mathsf{\tilde{d}}(p_{i_k})^{-1}| &\leq
|(w'_{i_2})^{-1}w'_{i_1}| +
|\mathsf{\tilde{d}}(p_{i_4})\mathsf{\tilde{d}}(p_{i_k})^{-1}| \\ &=
|v| + |(\mathsf{\tilde{d}}(s)\mathsf{\tilde{d}}(p_{i_k})^{-1})^{-1}w'_{i_2}| \leq |v|, \notag
\end{align}
where $(\mathsf{\tilde{d}}(s)\mathsf{\tilde{d}}(p_{i_k})^{-1})^{-1}w'_{i_2}$ exists because $k \geq 2$.

Second, since $w'_{i_1} \leq_p \mathsf{\tilde{d}}(s)$ by (iii) of \eqref{eq:lem:bound_on_irreducible:1}, we have that $v = (\mathsf{\tilde{d}}(s) \mathsf{\tilde{d}}(p_{i_4})^{-1})^{-1}w'_{i_1} \leq_p \mathsf{\tilde{d}}(p_{i_4}) \leq_p \mathsf{u}_R$ and $(\mathsf{\tilde{d}}(s) \mathsf{\tilde{d}}(p_{i_k})^{-1})^{-1}w'_{i_1} \leq_p \mathsf{\tilde{d}}(p_{i_k}) \leq_p \mathsf{u}_R$.
Therefore,
\begin{equation*}
\text{$v \leq_p \mathsf{u}_R$ and 
$(\mathsf{\tilde{d}}(p_{i_4})\mathsf{\tilde{d}}(p_{i_k})^{-1})^{-1}v = (\mathsf{\tilde{d}}(s) \mathsf{\tilde{d}}(p_{i_k})^{-1})^{-1}w'_{i_1} \leq_p \mathsf{u}_R$.}
\end{equation*}
This and the fact that, by \eqref{eq:lem:bound_on_irreducible:1}, $(\mathsf{\tilde{d}}(p_{i_4})\mathsf{\tilde{d}}(p_{i_k})^{-1}) \not= 1$ imply that $v \leq_p (\mathsf{\tilde{d}}(p_{i_4})\mathsf{\tilde{d}}(p_{i_k})^{-1})^\infty$. Hence,
\begin{equation}\label{eq:lem:bound_on_irreducible:inter2}
\text{$|\mathsf{\tilde{d}}(p_{i_4})\mathsf{\tilde{d}}(p_{i_k})^{-1}|$ is a period of $v$.}
\end{equation}
Then, from \eqref{eq:lem:bound_on_irreducible:inter1} and \eqref{eq:lem:bound_on_irreducible:inter2}, we can use Lemma \ref{lem:fine_wilf} with $v$ to deduce that $|q|$ divides $|\mathsf{\tilde{d}}(p_{i_4})\mathsf{\tilde{d}}(p_{i_k})^{-1}|$, proving the claim.

Let now $\tilde{q} \in \cA^+$ be the shortest word such that $\mathsf{\tilde{d}}(p_{i_4}) \leq_p \tilde{q}^\infty$. From the last claim, we have for $k \in \{2,3\}$ that $\mathsf{\tilde{d}}(p_{i_4})\mathsf{\tilde{d}}(p_{i_k})^{-1} = q^{n_k}$ for some $n_k \geq 1$. 
Then, since $|\mathsf{\tilde{d}}(p_{i_4})\mathsf{\tilde{d}}(p_{i_k})^{-1}|$ is a period of $\mathsf{\tilde{d}}(p_{i_4})$ as $p_{i_k} < p_{i_4}$, we obtain $\mathsf{\tilde{d}}(p_{i_4}) \leq_p (\mathsf{\tilde{d}}(p_{i_4})\mathsf{\tilde{d}}(p_{i_k})^{-1})^\infty = q^\infty$ and  $\tilde{q} \leq_p q$. 
Since, $v \leq_p \mathsf{\tilde{d}}(p_{i_4}) \leq_p \tilde{q}^\infty$, we also have $q \leq_p \tilde{q}$. Therefore, $\tilde{q} = q$.

Now we can finish the proof of the proposition. 
Since $\mathsf{\tilde{d}}(p_{i_4}) \leq_p q^\infty$, there are $n\geq0$ and $r <_p q$ such that $\mathsf{\tilde{d}}(p_{i_4}) = q^nr$. Then, for $k \in \{2,3\}$, we have $\mathsf{\tilde{d}}(p_{i_k}) = q^{-n_k}\mathsf{\tilde{d}}(p_{i_4}) = q^{n-n_k}r$. Being $p_{i_2}, p_{i_3} \in \cD'(p_{i_4})$, we get
$$ \mathsf{\tilde{d}}'_M(p_{i_k}) \mathsf{\tilde{d}}'_R(p_{i_k}) a(p_{i_k}) =
\mathsf{\tilde{d}}(p_{i_k})^{-1} \mathsf{\tilde{d}}_R(p_{i_k})a(p_{i_k}) \leq_p
\mathsf{\tilde{d}}(p_{i_k})^{-1}\mathsf{\tilde{d}}(p_{i_4}) = r^{-1}q^{n_k} \leq_p r^{-1}q^\infty.$$
Thus, condition \ref{lem:redux_if:prefix} of Lemma \ref{lem:redux_if} holds, which implies that $\{D(p_{i_2}), D(p_{i_3})\}$ is reducible, contradicting our hypothesis.

\end{proof}

\section*{Acknowledgement}
The authors thank the anonymous referee for his/her careful reading of the manuscript and useful suggestions.

\bibliography{biblio}
\bibliographystyle{amsalpha}

\end{document}